\crefname{section}{Section}{Sections}
\crefname{subsection}{\S}{\S\S}
\crefname{subsubsection}{\S}{\S\S}
\theoremstyle{plain}
\newtheorem{lemma}{Lemma}[section]
\newtheorem{proposition}[lemma]{Proposition}
\newtheorem{corollary}[lemma]{Corollary}
\newtheorem{theorem}[lemma]{Theorem}
\theoremstyle{nonumberplain}
\newtheorem{theoremN}{Theorem}
\theoremstyle{plain}
\newtheorem{definition}[lemma]{Definition}
\newtheorem{remark}[lemma]{Remark}
\newtheorem{remarks}[lemma]{Remarks}
\newtheorem{notation}[lemma]{Notation}
\newtheorem{construction}[lemma]{Construction}
\crefname{definition}{definition}{definitions}
\crefname{ex}{example}{examples}
\crefname{exs}{example}{examples}
\crefname{remark}{remark}{remarks}
\crefname{remarks}{remark}{remarks}
\crefname{convention}{convention}{conventions}
\crefname{notation}{notation}{notations}
\crefname{table}{table}{tables}
\crefname{lemma}{lemma}{lemmas}
\crefname{proposition}{proposition}{propositions}
\crefname{corollary}{corollary}{corollaries}
\crefname{theorem}{theorem}{theorems}
\crefname{enumi}{}{}
\crefname{assumption}{assumption}{Assumptions}
\crefname{construction}{construction}{Constructions}
\crefname{equation}{}{}
\numberwithin{equation}{section}
\theoremstyle{nonumberplain}
\newtheorem{proof}{Proof}
\newcommand\bG{{\mathbb G}}
\newcommand\bH{{\mathbb H}}
\newcommand\bP{{\mathbb P}}
\newcommand\cC{{\mathcal C}}
\newcommand\cM{{\mathcal M}}
\newcommand\cO{{\mathcal O}}
\DeclareMathOperator{\id}{id}
\DeclareMathOperator{\Hom}{\mathrm{Hom}}
\DeclareMathOperator{\Alg}{\cat{Alg}}
\DeclareMathOperator{\Coalg}{\cat{Coalg}}
\DeclareMathOperator{\Bialg}{\cat{Bialg}}
\DeclareMathOperator{\HAlg}{\cat{HAlg}}
\DeclareMathOperator{\vc}{\cat{Vec}}
\DeclareMathOperator{\cqg}{\cat{CQG}}
\newcommand{\cat}[1]{\textsc{#1}}
\newcommand{\qedhere}{\mbox{}\hfill\ensuremath{\blacksquare}}
\renewcommand{\square}{\mathrel{\Box}}
\newcommand{\xrightarrowdbl}[2][]{%
  \xrightarrow[#1]{#2}\mathrel{\mkern-14mu}\rightarrow
}
\title{Epimorphic quantum subgroups and coalgebra codominions}
\author{Alexandru Chirvasitu}
\begin{document}

\date{}

\newcommand{\Addresses}{{
  \bigskip
  \footnotesize

  \textsc{Department of Mathematics, University at Buffalo}
  \par\nopagebreak
  \textsc{Buffalo, NY 14260-2900, USA}  
  \par\nopagebreak
  \textit{E-mail address}: \texttt{achirvas@buffalo.edu}


}}

\maketitle

\begin{abstract}
  We prove a number of results concerning monomorphisms, epimorphisms, dominions and codominions in categories of coalgebras. Examples include: (a) representation-theoretic characterizations of monomorphisms in all of these categories that when the Hopf algebras in question are commutative specialize back to the familiar necessary and sufficient conditions (due to Bien-Borel) that a linear algebraic subgroup be epimorphically embedded; (b) the fact that a morphism in the category of (cocommutative) coalgebras, (cocommutative) bialgebras, and a host of categories of Hopf algebras has the same codominion in any of these categories which contain it; (c) the invariance of the Hopf algebra or bialgebra (co)dominion construction under field extension, again mimicking the well-known corresponding algebraic-group result; (d) the fact that surjections of coalgebras, bialgebras or Hopf algebras are regular epimorphisms (i.e. coequalizers) provided the codomain is cosemisimple; (e) in particular, the fact that embeddings of compact quantum groups are equalizers in the category thereof, generalizing analogous results on (plain) compact groups; (f) coalgebra-limit preservation results for scalar-extension functors (e.g. extending scalars along a field extension $\Bbbk\le \Bbbk'$ is a right adjoint on the category of $\Bbbk$-coalgebras). 
\end{abstract}

\noindent {\em Key words: Hopf algebra; coalgebra; comodule; epimorphism; monomorphism; dominion; codominion; pullback; locally presentable; adjoint functor; algebraic group; cocommutative; CQG algebra}

\vspace{.5cm}

\noindent{MSC 2020: 18A20; 16T05; 16T15; 20G42; 18A30; 20G05; 20G15; 18C15; 18C20; 16D40}

\tableofcontents

\section*{Introduction}

A possible starting point for the considerations below is the familiar problem of determining, for a {\it concrete category} (i.e. one consisting of sets and functions \cite[Definition 5.1]{ahs}), whether the epimorphisms are precisely the surjections. Recall \cite[Definition 7.3.9]{ahs} that an {\it epimorphism} in a category $\cC$ is a morphism $f:c\to c'$ for which every precomposition map
\begin{equation*}
  \cC(c',c'')\xrightarrow{-\circ f}\cC(c,c'')
\end{equation*}
is one-to-one. This is one natural category-theoretic generalization of the notion of surjectivity, and for concrete categories, where both concepts make sense, the latter plainly implies the former.

The problem recurs in endless variations throughout a massive amount of literature this introduction cannot possibly do justice. One example would be \cite{reid-epi}, which addresses the issue for several categories (e.g. compact groups and Lie algebras, most relevantly for the discussion below, but also others, such as $C^*$- or von Neumann algebras). Other examples include, say, the earlier \cite{pogunt_epi-cpct} (also compact groups), \cite{hn-epi} ($C^*$-algebras), \cite{bg_epi-lie} (Lie algebras, $p$-Lie algebras), \cite{bb1,bb2} (linear algebraic groups), \cite{brion_epi} (algebraic groups in general), or the series \cite{isb_epidom-1,isb_epidom-2,isb_epidom-3,isb_epidom-4,isb_epidom-5}, which systematically studies the broader notion of {\it dominion} \cite[14J]{ahs} in various categories (semigroups, associative algebras): assuming the category being studied is sufficiently well-behaved, the dominion of a morphism $f:c\to c'$ is the equalizer of the two structure maps
\begin{equation*}
  \begin{tikzpicture}[auto,baseline=(current  bounding  box.center)]
    \path[anchor=base] 
    (0,0) node (l) {$c'$}
    +(3,0) node (r) {$c'\coprod_cc'$}
    ;
    \draw[->] (l) to[bend left=6] node[pos=.5,auto] {$\scriptstyle $} (r);
    \draw[->] (l) to[bend right=6] node[pos=.5,auto] {$\scriptstyle $} (r);
  \end{tikzpicture}
\end{equation*}
into the self-pushout of $f$. The connection between the two notions is that $f:c\to c'$ is epic precisely if its dominion is $\id:c'\to c'$. Dominions make an oblique appearance in \cite{bb1} too, under a different name: for a closed linear algebraic subgroup $\iota:H\le G$, the {\it observable envelope} \cite[\S 3]{bb1} of $H$ in $G$ is precisely the dominion of $\iota$ in the category of linear algebraic groups (see the discussion following \Cref{re:charpok}).

One of the main results is that the much of the familiar characterization \cite[Th\'eor\`eme 1]{bb1} of epimorphisms in the category of linear algebraic groups goes through for Hopf algebras, taking into account the fact that one must dualize (the category of linear algebraic groups over a field $\Bbbk$ being {\it contravariantly} equivalent to that of Hopf $\Bbbk$-algebras \cite[\S 1.4, Theorem]{water_bk}). This is the content of \Cref{th:monohopf} below:

\begin{theoremN}
  For a morphism $\pi:H\to H'$ of coalgebras over a field $\Bbbk$ the following conditions are equivalent:
  \begin{enumerate}[(a)]
  \item $\pi$ is monic;
  \item The corestriction functor induced by $\pi$ between categories of (finite-dimensional) comodules is full.
  \item For any (finite-dimensional) $H$-comodule $V$ every idempotent morphism on $\cM^{H'}$ is in $\cM^H$.
  \item For any (finite-dimensional) $H$-comodule $V$ every direct-sum decomposition $V\cong V_1\oplus V_2$ as $H'$-comodules is also one of $H$-comodules.

    If furthermore $\pi$ is a morphism of Hopf algebras, these are also equivalent to
  \item For any (finite-dimensional) right $H$-comodule $V$ with comodule structure map
    \begin{equation*}
      V\ni v\xmapsto{} v_0\otimes v_1\in V\otimes H
    \end{equation*}
    the space
    \begin{equation*}
      V^{H'}:=\{v\in V\ |\ v_0\otimes \pi(v_1) = v\otimes 1\}
    \end{equation*}
    of {\it $H'$-coinvariants} coincides with the analogous space of $H$-coinvariants.
  \item We have $H^{H'}=\Bbbk$ (i.e. the only $H'$-coinvariants in $H$ are the scalars).  \qedhere
  \end{enumerate}
\end{theoremN}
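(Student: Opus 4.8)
The plan is to handle the coalgebra conditions (a)--(d) first and then append the Hopf-algebra statements (e)--(f), writing $\pi_*\colon\cM^H\to\cM^{H'}$ for the corestriction functor. The equivalences among (b), (c), (d) should be purely formal. Here (b)$\Rightarrow$(c) is immediate, since an idempotent is a morphism; (c)$\Leftrightarrow$(d) just records that an idempotent $e$ on a comodule $V$ is the same datum as the decomposition $V=\operatorname{im}e\oplus\ker e$, so that $e$ is $H$-colinear exactly when both summands are $H$-subcomodules; and (c)$\Rightarrow$(b) follows from the standard device of replacing an $H'$-colinear $\phi\colon V\to W$ by the idempotent $\left(\begin{smallmatrix}1&0\\\phi&0\end{smallmatrix}\right)$ on $V\oplus W$, which is $H'$-colinear precisely when $\phi$ is and $H$-colinear precisely when $\phi$ is. I expect none of these to cause trouble.

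For (b)$\Rightarrow$(a) I would argue contrapositively. If $\pi$ is not monic there are parallel maps $f\ne g\colon C\to H$ with $\pi f=\pi g$, and by local finiteness of coalgebras we may take $C$ finite-dimensional. Equip the single space $C$ with the two $H$-comodule structures $\rho_f=(\id\otimes f)\Delta$ and $\rho_g=(\id\otimes g)\Delta$. Since $\pi f=\pi g$ these push forward to the \emph{same} $H'$-comodule, so $\id_C$ is an $H'$-colinear isomorphism between them; but applying $(\varepsilon\otimes\id)$ to the $H$-colinearity identity $(\id\otimes f)\Delta=(\id\otimes g)\Delta$ recovers $f=g$, so $\id_C$ is not $H$-colinear and $\pi_*$ fails to be full.

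The genuinely constructive step is (a)$\Rightarrow$(b), again contrapositively, and I expect it to be the main obstacle. Starting from an $H'$-colinear $\phi\colon V\to W$ between finite-dimensional comodules that is not $H$-colinear, I would form the obstruction $D:=(\phi\otimes\id)\rho_V-\rho_W\phi\colon V\to W\otimes H$; colinearity over $H'$ forces $(\id\otimes\pi)D=0$, hence $\operatorname{im}D\subseteq W\otimes\ker\pi$, while $D\ne0$. A direct check shows $D$ is a coalgebra $1$-cocycle, so that $\bigl(\begin{smallmatrix}\rho_W&D\\0&\rho_V\end{smallmatrix}\bigr)$ is a genuine $H$-coaction on $U:=W\oplus V$, realizing a non-split extension of $V$ by $W$ that splits after applying $\pi$ (precisely because $(\id\otimes\pi)D=0$). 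Passing to matrix coefficients $a_{ij},b_{kl},d_{ij}\in H$ of $U$, I would then write down two coalgebra maps into $H$ from the abstract block-upper-triangular comatrix coalgebra $\widetilde C$ (generators $\alpha_{ij},\beta_{kl},\delta_{ij}$ with the evident comatrix comultiplication): one sending the generators to $a_{ij},b_{kl},d_{ij}$, the other sending $\delta_{ij}\mapsto0$ and agreeing on the rest. These are distinct because $D\ne0$, yet are equalized by $\pi$ because $\pi(d_{ij})=0$, contradicting monicity. The delicate points are verifying the cocycle identity, checking that $\widetilde C$ is a coalgebra and that the ``truncation'' map is a coalgebra morphism, and --- crucially --- working with the abstract $\widetilde C$ rather than the coefficient subcoalgebra of $H$, so as to avoid spurious linear relations among the $a,b,d$.

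Finally, for the Hopf-algebra equivalences I would use the monoidal structure on comodules. Since $\pi$ is a Hopf map, $\pi_*$ is a tensor functor commuting with duals, so for finite-dimensional $V,W$ one has $\Hom_H(V,W)=(V^*\otimes W)^{H}$ and $\Hom_{H'}(\pi_*V,\pi_*W)=(V^*\otimes W)^{H'}$; specializing $V=\Bbbk$ identifies coinvariants, so (b) is equivalent to the assertion that $M^{H}=M^{H'}$ for every (finite-dimensional) comodule $M$, which is (e). Then (e)$\Rightarrow$(f) is the special case $M=H$, using $H^{H}=\Bbbk$. The one implication needing an idea is (f)$\Rightarrow$(e): for $\xi\in M^{H'}$ and any $f\in M^*$, the matrix-coefficient map $\theta_f\colon M\to H$, $m\mapsto(f\otimes\id)\rho(m)$, is $H$-colinear, hence $H'$-colinear, so it carries $M^{H'}$ into $H^{H'}=\Bbbk 1$; as the functionals $f$ separate points this forces $\rho(\xi)\in M\otimes\Bbbk 1$, and the counit axiom then gives $\rho(\xi)=\xi\otimes1$, i.e. $\xi\in M^{H}$. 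I would note that this last argument uses only the group-like unit, the antipode entering solely through the internal-hom identification underlying (b)$\Leftrightarrow$(e).
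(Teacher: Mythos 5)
Your proposal is correct, and for most of the theorem it follows the same skeleton as the paper: the formal loop among (b), (c), (d) (your idempotent $\left(\begin{smallmatrix}1&0\\\phi&0\end{smallmatrix}\right)$ is the paper's graph argument in matrix form, since its image is exactly $\Gamma_\phi$), the identification $\cM^H(V,W)\cong (W\otimes V^*)^{H}$ reducing (b) to (e), the specialization $M=H$ for (e)$\Rightarrow$(f), and your matrix-coefficient maps $\theta_f$ for (f)$\Rightarrow$(e), which are precisely the components of the embedding of a comodule into a direct sum of copies of $H$ that the paper invokes via \cite[Proposition 2.4.3]{dnr}. The genuine divergence is at (a)$\Leftrightarrow$(b): the paper does not prove this at all but cites \cite[Theorem 3.5]{nt_torsion} and \cite[Theorem 2.1]{agore_mono}, whereas you prove both directions from scratch. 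Your (b)$\Rightarrow$(a) via the two coactions $(\id\otimes f)\Delta$ and $(\id\otimes g)\Delta$ on a finite-dimensional subcoalgebra is clean and correct, and your (a)$\Rightarrow$(b) works as advertised: the obstruction $D=(\phi\otimes\id)\rho_V-\rho_W\phi$ does satisfy $(\id\otimes\varepsilon)D=0$ and the cocycle identity $(\id\otimes\Delta)D=(\rho_W\otimes\id)D+(D\otimes\id)\rho_V$ (both follow from coassociativity of $\rho_V,\rho_W$), the abstract block-triangular comatrix coalgebra $\widetilde C$ is coassociative, both the coefficient map and its $\delta\mapsto 0$ truncation are coalgebra morphisms, and your insistence on the abstract $\widetilde C$ rather than the coefficient subcoalgebra of $H$ is exactly the right precaution. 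It is worth observing that your trivial-extension device is essentially the same trick the paper deploys later, in the proof of \Cref{th:coalg-dom} (the coalgebra $C'=C\oplus X^*$ built from a bicomodule), so your argument is very much in the paper's spirit even where it departs from its letter. The trade-off: the paper's route is shorter by outsourcing the hard equivalence to the literature; yours buys a self-contained proof at the cost of the verifications you correctly flagged, all of which go through.
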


The statement above is somewhat vague on which category (coalgebras or Hopf algebras) the morphisms are supposed to be monic in. The same issue arises in trying to relate the statement back to algebraic groups: the latter form a category dual to that of {\it commutative} Hopf algebras, so it is reasonable to wonder whether a morphism contained in two of the categories of interest might be monic in one but not the other. \Cref{pr:commnodiff} shows that this is not an issue, and \Cref{th:same-co-dom} expands this to (co)dominions (given a morphism in any of the relevant categories, it does not matter where one computes its codominion):

\begin{theoremN}
  \begin{enumerate}[(1)]
  \item\label{item:intro-samedom} The dominion of an algebra morphism is the same in any of the following categories that happen to contain it:
    \begin{itemize}
    \item commutative algebras;
    \item (commutative) bialgebras;
    \item (commutative or cocommutative) Hopf algebras;
    \item Hopf algebras with involutive or bijective antipode.
    \end{itemize}
    In particular, a morphism is or is not epic simultaneously in all of these categories containing it.
  \item\label{item:intro-samecodom} The codominion of an algebra morphism is the same in any of the following categories that happen to contain it:
    \begin{itemize}
    \item cocommutative coalgebras;
    \item (commutative) bialgebras;
    \item (commutative or cocommutative) Hopf algebras;
    \item Hopf algebras with involutive or bijective antipode.
    \end{itemize}
    In particular, a morphism is or is not monic simultaneously in all of these categories containing it.  \qedhere
  \end{enumerate} 
\end{theoremN}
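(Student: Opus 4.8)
The two assertions are formally dual: a dominion is the equalizer of the two coprojections $c'\rightrightarrows c'\coprod_c c'$ into the self-pushout, while a codominion is the coequalizer of the two projections $c'\times_c c'\rightrightarrows c'$ out of the self-pullback. The plan is to prove part (2) and obtain part (1) by reversing every arrow. In both cases the statement collapses to a single slogan: the forgetful functors tying these categories together preserve \emph{exactly} the two (co)limits out of which the (co)dominion is assembled, so that the (co)dominion never moves as a sub/quotient object of $c'$. I would organize the listed categories into a ``vertical'' descent that forgets the extra structure and a ``horizontal'' chain of full subcategories, glue them by transitivity across their overlaps, and read off the final epimorphism/monomorphism clauses from the equivalences ``epic $\Leftrightarrow$ dominion $=\id_{c'}$'' and ``monic $\Leftrightarrow$ codominion $=\id_{c'}$''.

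The engine is the (co)monoid description $\mathrm{Bialg}=\mathrm{Mon}(\mathrm{Coalg})=\mathrm{Comon}(\mathrm{Alg})$ together with the exactness of $\otimes$ over a field $\Bbbk$. Since $\mathrm{Bialg}\to\mathrm{Coalg}$ creates limits, the self-pullback $c'\times_c c'$ of a bialgebra map has underlying coalgebra the self-pullback computed in $\mathrm{Coalg}$. The kernel pair $c'\times_c c'\rightrightarrows c'$ is moreover a \emph{reflexive} pair, and because $\otimes$ is exact over $\Bbbk$ the functor $\mathrm{Mon}(\mathrm{Coalg})\to\mathrm{Coalg}$ creates reflexive coequalizers; composing with the colimit-preserving $\mathrm{Coalg}\to\mathrm{Vec}$ shows the defining coequalizer is computed on underlying vector spaces. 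Hence the codominion of a bialgebra map, as a quotient coalgebra of $c'$, coincides with that of its underlying coalgebra map. Dually, using that $\mathrm{Comon}(\mathrm{Alg})\to\mathrm{Alg}$ creates \emph{coreflexive} equalizers (the two coprojections share the codiagonal retraction) and that $\mathrm{Alg}\to\mathrm{Vec}$ preserves limits, the dominion of a bialgebra map coincides as a subalgebra of $c'$ with that of its underlying algebra map. This dissolves the naive worry that an algebra-level dominion might fail to be a subcoalgebra: the coreflexive-equalizer preservation supplies the coalgebra structure for free. Running the same bookkeeping inside the (co)commutative full subcategories — closed under the relevant sub/quotient and tensor constructions — identifies the (co)dominion there with the one in commutative algebras, respectively cocommutative coalgebras, where the categorical product is $\otimes$ and the self-pullback is therefore pinned down concretely inside $c'\otimes c'$.

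It remains to climb from bialgebras to Hopf algebras and to the antipode-restricted categories. Each of $\mathrm{Hopf}$, $\mathrm{Hopf}^{S^2=\id}$, $\mathrm{Hopf}^{S\,\mathrm{bij}}$ sits as a full subcategory of $\mathrm{Bialg}$, and each inclusion is reflective, hence a right adjoint that automatically preserves the limit halves of the constructions — the self-pullback for codominions, the equalizer for dominions. The only content is the colimit half: the reflexive coequalizer for codominions, the self-pushout for dominions. Here I would argue directly that the pertinent self-colimit of objects of the subcategory, formed in $\mathrm{Bialg}$, already lies in the subcategory, so that the reflector acts as the identity and the colimit is unchanged. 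Concretely, the self-pullback of Hopf algebras is again Hopf (a preserved limit), its projections are Hopf maps, so the bi-ideal cutting out the reflexive-coequalizer quotient is a Hopf ideal and the quotient is a Hopf algebra; dually the self-pushout of Hopf algebras carries an induced antipode. One then checks that this induced antipode remains involutive (resp. bijective) whenever that of $c'$ is. Transitivity across the overlaps — a commutative-and-cocommutative Hopf map lies in every category in play — glues all the identifications together.

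The step I expect to be the genuine obstacle is exactly this last verification that the structured subcategories are closed under the pertinent self-colimit: that the reflexive-coequalizer quotient (part (2)) and the self-pushout (part (1)) of Hopf algebras are Hopf, and that the induced antipode retains whatever special property ($S^2=\id$ or $S$ bijective) was assumed. Everything upstream is soft — creation of (co)limits by the (co)monoid forgetful functors, exactness of $\otimes$ over a field, and reflectivity of the Hopf inclusions — but the survival of an antipode of the correct type under these specific colimits must be checked by hand, precisely because a reflective inclusion need not preserve colimits a priori and one has to rule out the reflector enlarging the answer.
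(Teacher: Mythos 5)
Your architecture reproduces the skeleton of the paper's proof of \Cref{th:same-co-dom}: dualize to treat codominions only; describe the codominion as the coequalizer of the kernel pair; use the right-adjointness of the forgetful functors to $\Coalg_{\bullet}$ (Porst) to see that the kernel pair is computed in $\Coalg_{\bullet}$; observe the pair is reflexive; and show the coequalizer is computed in $\vc$. Your one stylistic divergence — deducing that $\Bialg=\mathrm{Mon}(\Coalg)\to\Coalg$ creates reflexive coequalizers from exactness of $\otimes_{\Bbbk}$ — is precisely the ``more roundabout (and conceptual)'' route the paper itself acknowledges in \Cref{re:beknotenough}; the paper instead proves \Cref{le:reflok} by hand, noting that the span of $\{f_1(x)-f_2(x)\}$ is an ideal (by reflexivity), automatically a coideal, and stable under $S$ (and $S^{-1}$, $S^2$). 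That direct computation also disposes, in one stroke, of the step you flag as ``the genuine obstacle'': the survival of the (involutive, bijective) antipode on the quotient is immediate from antipode-stability of that span, so no separate reflector-versus-colimit analysis is needed on the coequalizer side.

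The genuine gap is elsewhere: your identifications run along two disjoint chains — the plain one ($\Coalg,\Bialg,\HAlg,{}_2\HAlg,{}_{bi}\HAlg$, and the commutative categories, all of whose forgetful functors land in $\Coalg$) and the cocommutative one ($\Coalg_{cc},\Bialg_{cc},\HAlg_{cc}$, landing in $\Coalg_{cc}$) — and your only glue between them, ``transitivity across the overlaps,'' does not work. A cocommutative Hopf algebra map lies in both $\HAlg$ and $\HAlg_{cc}$, but your argument computes its codominion as the $\vc$-coequalizer of two \emph{different} pairs: the self-pullback in $\Coalg$ versus the self-pullback in $\Coalg_{cc}$. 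These genuinely differ: the inclusion $\Coalg_{cc}\to\Coalg$ is a \emph{left} adjoint (coreflective, with coreflector the largest cocommutative subcoalgebra), so it does not preserve pullbacks, and products of cocommutative coalgebras in $\Coalg$ need not be cocommutative. Since the theorem's lists mix the two flavors (e.g.\ cocommutative coalgebras alongside plain bialgebras), this cross-flavor equality is part of the statement and must be proved. The paper's bridge is the cotensor description in \Cref{th:coalg-dom}, equivalence \Cref{item:prod} $\iff$ \Cref{item:cotens}: via the canonical map $C\times_D C\to C\square_D C$, the codominion in $\Coalg$ is also the $\vc$-coequalizer of the two maps $C\square_D C\to C$; and for cocommutative $C$, $D$ the cotensor product \emph{is} the self-pullback in $\Coalg_{cc}$ (as noted in the proof of \Cref{pr:commnodiff}), so both flavors yield the same quotient. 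Dually, comparing dominions in $\Alg$ and $\Alg_c$ requires Stenstr\"om's description of the dominion as the equalizer of the two maps $B\to B\otimes_A B$, the tensor square being the self-pushout only in the commutative category. With that ingredient restored, the rest of your outline goes through.
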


Another theme cropping up in the literature is the independence of the various notions discussed above (dominions, epimorphisms) of the base field: \cite[Proposition 5.3]{bg_epi-lie} is a case in point (dominions of finite-dimensional Lie-algebra morphisms are preserved by field extensions), as is \cite[Theorem 5 (ii)]{brion_epi} (where the category is that of algebraic groups instead). In this same spirit, field extensions preserve not only codominions (\Cref{pr:fieldextcodom}), but also arbitrary limits in categories of objects equipped with `coalgebraic structure` (\Cref{th:fieldextfinlim} \Cref{item:coalg}):

\begin{theoremN}
  Let $\Bbbk\le \Bbbk'$ be a field extension.
  \begin{enumerate}[(1)]
  \item The functor $\Bbbk'\otimes_{\Bbbk}-$ between any of the following categories (of objects linear over $\Bbbk$ and $\Bbbk'$ respectively) is a right adjoint and in particular continuous:
    \begin{itemize}
    \item (cocommutative) coalgebras;
    \item (commutative or cocommutative) bialgebras;
    \item (commutative or cocommutative Hopf algebras);
    \item Hopf algebras with involutive or bijective antipode.
    \end{itemize}
  \item The same scalar-extension functor $\Bbbk'\otimes_{\Bbbk}-$ preserves (co)dominions in all of the categories above, along with those of commutative or plain algebras.  \qedhere
  \end{enumerate}
\end{theoremN}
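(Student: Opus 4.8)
The plan is to establish part (1) first and then read off part (2). Fix one of the listed categories $\cC_\Bbbk$ of $\Bbbk$-linear objects and write $E=\Bbbk'\otimes_\Bbbk-\colon\cC_\Bbbk\to\cC_{\Bbbk'}$. Each such category is locally presentable, with the finite-dimensional objects as its finitely presentable ones, so the adjoint functor theorem for locally presentable categories applies: to see that $E$ is a right adjoint it suffices to produce a left adjoint, and such a left adjoint exists as soon as $E$ is accessible and preserves small limits. Accessibility is immediate, since filtered colimits in $\cC_\Bbbk$ are created by the forgetful functor to $\vc_\Bbbk$ and $\Bbbk'\otimes_\Bbbk-$ commutes with filtered colimits of vector spaces. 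I record for part (2) that, by the same reasoning, \emph{all} colimits are created over $\vc_\Bbbk$, on which $\Bbbk'\otimes_\Bbbk-$ is cocontinuous; hence $E$ is also cocontinuous and, again by local presentability, simultaneously a left adjoint.

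The substance is therefore limit preservation, and here the bialgebra and Hopf cases reduce to that of plain coalgebras: the forgetful functors from the various bi/Hopf categories down to $\Coalg_\Bbbk$ create limits and intertwine the scalar extensions (the commutative and cocommutative variants being entirely analogous), so it is enough to treat $E\colon\Coalg_\Bbbk\to\Coalg_{\Bbbk'}$. A small limit is an equalizer of a pair of maps between products, so I would check that $E$ preserves products and equalizers. Both are computed in $\Coalg_\Bbbk$ as a \emph{largest subcoalgebra} inside an explicit subspace of a suitable vector space: for the equalizer of $f,g$, the largest subcoalgebra of the domain contained in $\ker(f-g)$; for a product, the largest subcoalgebra of the cofree coalgebra on the vector-space product cut out by the projection conditions. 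Thus all of part (1) reduces to the single assertion that the operation $W\mapsto\lfloor W\rfloor$, sending a subspace to the largest subcoalgebra it contains, commutes with $\Bbbk'\otimes_\Bbbk-$.

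This is the main obstacle, precisely because an arbitrary $\Bbbk'$-subspace of $\Bbbk'\otimes_\Bbbk C$ need not be defined over $\Bbbk$, so naive descent is unavailable. I would dispatch it by reducing to finite dimensions: by the fundamental theorem of coalgebras any element of $\Bbbk'\otimes_\Bbbk C$ lies in $\Bbbk'\otimes_\Bbbk C_0$ for some finite-dimensional subcoalgebra $C_0\subseteq C$, and flatness turns intersections into tensor products, reducing the claim to the equality $\lfloor\Bbbk'\otimes W\rfloor=\Bbbk'\otimes\lfloor W\rfloor$ inside a finite-dimensional coalgebra. There one dualizes: under $C_0\mapsto C_0^*$ the largest subcoalgebra contained in $W$ corresponds to the two-sided ideal of $C_0^*$ generated by $W^{\perp}$, and formation of the ideal generated by a subspace visibly commutes with the flat base change $\Bbbk'\otimes_\Bbbk-$, using $(\Bbbk'\otimes C_0)^*\cong\Bbbk'\otimes C_0^*$ and $(\Bbbk'\otimes W)^{\perp}\cong\Bbbk'\otimes W^{\perp}$ in finite dimensions. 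Taking perpendiculars back yields the desired equality, so $E$ preserves limits and is a right adjoint. I expect the product case to need the most care, as its ambient space is the cofree coalgebra, whose interaction with $\Bbbk'\otimes_\Bbbk-$ must be routed through this finite-dimensional reduction rather than handled directly.

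Part (2) then follows formally. A dominion is the equalizer of the two coprojections $c'\rightrightarrows c'\coprod_c c'$, and a codominion, dually, the coequalizer of the two projections $c'\times_c c'\rightrightarrows c'$. On the coalgebraic categories $E$ is both a right and a left adjoint, by part (1) and the remark of the first paragraph, so it preserves both of these (co)limits. On the purely algebraic categories I cannot invoke right-adjointness, but $E=\Bbbk'\otimes_\Bbbk-$ is flat, hence exact, hence preserves the finite limits (equalizers and pullbacks) appearing above, and it is cocontinuous, hence preserves the finite colimits (pushouts and coequalizers); since the dominion and codominion are assembled from exactly these pieces, $E$ preserves them there as well. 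This recovers \Cref{pr:fieldextcodom} and extends it to dominions and to every category in the statement.
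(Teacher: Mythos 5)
Your equalizer step is sound, and your finite-dimensional proof of the key lemma (dualizing to the ideal of $C_0^*$ generated by $W^{\perp}$) is a correct alternative to the paper's \Cref{le:lgsubcoalg}, which instead writes $C_V=\bigcap_{n\ge 0}\Delta_n^{-1}V^{\otimes(n+1)}$ and uses preservation of arbitrary intersections. The genuine gap is the product step. Your reduction presents $\prod^{\Coalg}C_i$ as the largest subcoalgebra of the cofree coalgebra on the vector-space product, cut out by the projection conditions, and then hopes to apply the lemma. But the lemma compares largest subcoalgebras inside a \emph{fixed} ambient coalgebra $C$ and its base change $\Bbbk'\otimes_{\Bbbk}C$, whereas here the two ambient coalgebras differ: over $\Bbbk'$ the relevant ambient is the cofree $\Bbbk'$-coalgebra on $\prod_i(\Bbbk'\otimes_\Bbbk C_i)$, and \emph{neither} ingredient commutes with $\Bbbk'\otimes_{\Bbbk}-$ for infinite extensions. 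The vector-space product does not: $\Bbbk'\otimes\prod_iV_i\to\prod_i(\Bbbk'\otimes V_i)$ is a proper embedding when infinitely many $V_i\neq 0$ --- precisely the obstruction that makes the $\Alg_\bullet$ clause of \Cref{th:fieldextfinlim} fail for infinite extensions. And the cofree coalgebra does not either: the cofree coalgebra on a one-dimensional space is the finite dual $\Bbbk[x]^{\circ}$, and for $\bQ\le\bQ(\pi)$ the sequence $(\pi^n)_n$ lies in $\bQ(\pi)[x]^{\circ}$ (it satisfies $a_{n+1}=\pi a_n$) but not in $\bQ(\pi)\otimes_{\bQ}\bQ[x]^{\circ}$, since its entries span an infinite-dimensional $\bQ$-subspace of $\bQ(\pi)$. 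So ``routing through the finite-dimensional reduction'' cannot repair this: that reduction proves the lemma within one coalgebra; it does not identify the two mismatched ambients. The paper's mechanism for products is genuinely different and is the heart of its proof: \Cref{le:comodstruct} shows every $C_{\Bbbk'}$-comodule structure on $V_{\Bbbk'}$ descends uniquely to $V$, and Tannakian reconstruction then identifies $(\prod C_i)_{\Bbbk'}$ with $\prod C_{i,\Bbbk'}$. Relatedly, your aside that the cocommutative variant is ``entirely analogous'' is not: $\Coalg_{cc}\to\Coalg$ is a \emph{left} adjoint (coreflective inclusion), so it does not create products --- finite products in $\Coalg_{cc}$ are tensor products --- and the paper needs the separate step expressing $\Coalg_{cc}$-products as cofiltered $\Coalg$-limits of tensor products.

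A secondary error: your recorded claim that \emph{all} colimits in these categories are created over $\vc$ is false for $\Alg$, $\Bialg_{\bullet}$ and ${}_{\square}\HAlg_{\bullet}$ (coproducts are free products; coequalizers in $\HAlg$ quotient by generated ideal-coideals, not vector-space coequalizers). Filtered colimits, yes; general colimits, no. The coequalizers actually needed for codominions are of reflexive pairs and those \emph{are} computed in $\vc$ (\Cref{le:reflok}, \Cref{th:same-co-dom}), and cocontinuity of $\Bbbk'\otimes_{\Bbbk}-$ on $\Alg$ holds by direct inspection of coproducts (as in \Cref{se:limtens}), so your part (2) survives with repaired justifications --- note the paper's \Cref{pr:fieldextcodom} avoids part (1) altogether by using the $\vc$-level descriptions $B\rightrightarrows B\otimes_AB$ and $C\square_DC\rightrightarrows C$ together with faithful flatness.
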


Occasionally, an object will have the property that any monomorphism having it as domain is automatically a dominion (these are, for instance, the {\it absolutely closed} algebras of \cite[discussion preceding Corollary 1.7]{isb_epidom-1}). Examples suggest a general phenomenon whereby ``semisimplicity entails absolute closure'': \cite[Addenda]{bg_epi-lie} reports a remark of Hochschild's to the effect that embeddings of semisimple Lie algebras are dominions in the category of finite-dimensional Lie algebras, and the analogue for linearly reductive algebraic groups follows from standard GIT theory (e.g. \cite[\S 1.2]{fkm}).

Dualizing the picture once more, morphisms onto {\it cosemisimple} coalgebras are codominions. Several versions of this are recorded in \Cref{cor:coflatcodom,cor:ontocoss} and \Cref{th:onesplit}. Focusing, for simplicity, on Hopf algebras (rather than bialgebras, etc.), a sampling of those results reads as follows:

\begin{theoremN}
  \begin{enumerate}[(1)]
  \item A morphism $\pi:H\to H'$ of Hopf algebras is a codominion if either
    \begin{itemize}
    \item $H$ is left or right faithfully coflat over $H'$;
    \item or $\pi$ has a right inverse in the category of either right or left $H'$-comodules.
    \end{itemize}
  \item In particular, surjections onto cosemisimple Hopf algebras are codominions in the category of Hopf algebras.  \qedhere
  \end{enumerate}
\end{theoremN}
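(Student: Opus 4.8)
The plan is to present $\pi$ as the coequalizer of its own kernel pair. Recall that the codominion of $\pi$ is the coequalizer of the two structure maps out of the self-pullback, and that the self-pullback of $\pi$ in the category of coalgebras is the cotensor product $H\square_{H'}H\subseteq H\otimes H$ (with $H$ viewed as an $H'$-bicomodule through $\pi$), the parallel pair $p_1,p_2$ being the restrictions of $\id\otimes\epsilon$ and $\epsilon\otimes\id$. Since the forgetful functor from coalgebras to vector spaces is a left adjoint it preserves this coequalizer and reflects isomorphisms, so $\pi$ is a codominion exactly when the fork
\begin{equation*}
  H\square_{H'}H\rightrightarrows H\xrightarrow{\ \pi\ }H'
\end{equation*}
is a coequalizer of plain vector spaces; by \Cref{th:same-co-dom} it is then irrelevant whether one reads the conclusion in coalgebras, bialgebras or Hopf algebras. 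Everything therefore reduces to exhibiting this fork as a coequalizer in $\vc$, and I would do so by producing an absolute (split) coequalizer, so as to avoid computing limits in the coalgebra category directly.

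Consider first the case of a section. Suppose $\sigma:H'\to H$ is a right inverse of $\pi$ in left $H'$-comodules, so that $\pi\sigma=\id$ and $\sigma$ intertwines the left coactions. Set
\begin{equation*}
  t\colon H\to H\otimes H,\qquad t(h)=\sum h_{(1)}\otimes\sigma\pi(h_{(2)}).
\end{equation*}
Using $\epsilon_H\sigma=\epsilon_{H'}$ (which follows from $\pi\sigma=\id$ alone) one checks at once that $(\id\otimes\epsilon)t=\id_H$ and $(\epsilon\otimes\id)t=\sigma\pi$, and these are precisely the identities making $t$ and $\sigma$ into splitting data for the fork in the sense of Beck. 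The only place the hypothesis enters is in verifying that $t$ takes values in $H\square_{H'}H$: writing out the cotensor condition for $t(h)$, it collapses exactly to the statement that $\sigma$ commutes with the left $H'$-coactions, i.e. to $\sigma$ being a left comodule map. Hence the fork is a split coequalizer, which is absolute, and $\pi$ is a codominion. A right comodule section is handled by the mirror construction $t'(h)=\sum\sigma\pi(h_{(1)})\otimes h_{(2)}$, which interchanges the roles of the two projections; a split fork is symmetric in its parallel pair, so nothing changes. Note that this argument never invokes the antipode and so already runs at the level of coalgebras.

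For the faithfully coflat case there need be no section, and I would instead run the coflat-descent argument dual to faithfully flat descent. Faithful coflatness of $H$ over $H'$ says exactly that $F:=H\square_{H'}(-)$ is exact and faithful on $H'$-comodules. Applying $F$ to the fork and using the identification $H\square_{H'}H'\cong H$ turns it into the base-changed fork over the cover $H$, which is split: the comultiplication supplies a map $\Delta\colon H\to H\square_{H'}H$ with $(\id\otimes\epsilon)\Delta=(\epsilon\otimes\id)\Delta=\id_H$, the exact dual of the multiplication homotopy $B\otimes_AB\to B$ in the classical story, and the higher cotensor powers carry the analogous contracting homotopy. Thus $F$ of the fork is an absolute coequalizer; since a faithful exact functor reflects exactness of the complex computing the cokernel of $p_1-p_2$, the original fork is itself a coequalizer, and $\pi$ is a codominion. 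Left versus right faithful coflatness corresponds to cotensoring on the two sides.

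Part (2) is then immediate: if $H'$ is cosemisimple then $\cM^{H'}$ is a semisimple category, so the surjection $\pi$, viewed as a morphism of (say) left $H'$-comodules, splits, and the split case above applies. The main obstacle I anticipate lies in the faithfully coflat case: one must first pin down that the self-pullback in the category of coalgebras genuinely is the cotensor product with the stated projections, and then make the descent machine turn — specifically, checking that the homotopy furnished by $\Delta$ lands in the iterated cotensor products and that exactness really is reflected by the faithful coflat functor. By contrast the split case reduces to a short Sweedler computation once the section is placed in the tensor factor matching its handedness.
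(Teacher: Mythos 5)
Your proof is correct in substance, and in the most interesting case it takes a genuinely different route from the paper's. The split case is essentially the paper's \Cref{th:onesplit}: your $t(h)=h_{(1)}\otimes\sigma\pi(h_{(2)})$ is precisely the splitting $\id_N\otimes s$ there, specialized to the one object $N=H'\square_{H'}H\cong H$ that the codominion statement needs, so the two arguments coincide. For the faithfully coflat case you diverge: the paper proves the full descent equivalence $\cM^{H'}\simeq \cM_{H\square_{H'}H}$ (\Cref{th:fcdesc}) via Beck's monadicity theorem and then specializes, whereas you apply $H\square_{H'}(-)$ to the three-term complex, contract the resulting co-Amitsur fork, and invoke the fact that a faithful exact functor between abelian categories reflects exactness --- the Cipolla-style faithfully flat descent argument, dualized. (One precision: the correct second homotopy is $t=\Delta\otimes\id$, not $\id\otimes\Delta$; with the latter both composites $p_it$ equal $\id$, while $\Delta\otimes\id$ gives $p_2t=\id$ and $p_1t=sq$, a split fork with the roles of the pair exchanged, which is harmless.) Your route is more elementary and proves exactly what is needed; the paper's buys the descent equivalence itself as a by-product. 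Your part (2) also differs from \Cref{cor:ontocoss}: instead of verifying faithful coflatness via injective cogenerators, you split $\pi$ in ${}^{H'}\cM$ by semisimplicity and feed it to the split case. That works, and is shorter.

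One framing point must be corrected, and you rightly flagged it yourself: the self-pullback of $\pi$ in $\Coalg$ is \emph{not} $H\square_{H'}H$ --- the cotensor product need not even be a coalgebra, and the canonical map $H\times_{H'}H\to H\square_{H'}H$ is generally not an isomorphism. What is true, and is all your argument uses, is \Cref{th:coalg-dom}: the codominion of $\pi$ is the coequalizer \emph{in $\vc$} of $\id\otimes\varepsilon,\ \varepsilon\otimes\id\colon H\square_{H'}H\to H$ (that $\vc$-coequalizer is automatically a quotient coalgebra, and coequalizing the cotensor pair is equivalent to coequalizing the $\Coalg$-pullback pair). With that citation in place of the pullback identification, your reduction stands; note also, for the coflat case, that $\id\otimes\varepsilon$, $\varepsilon\otimes\id$ and $\pi$ are morphisms of left $H'$-comodules (immediate from the cotensor relation), so the complex lives in ${}^{H'}\cM$, where exactness agrees with exactness in $\vc$.
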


These considerations also deliver a non-commutative version of the result that embeddings of compact groups are equalizers \cite[Theorem 2.1]{chi_multipush}. Recall (\cite[Definition 2.2]{dk_cqg}, \Cref{def:cqg-alg} below) that CQG algebras are non-commutative analogues of the (Hopf) algebras of {\it representative functions} \cite[\S III.1, Definition 1.1]{bd_lie} on compact groups. With this in mind, and paraphrased in dual language, \Cref{th:cqgcodom} says that embeddings of ``compact quantum groups'' are equalizers:

\begin{theoremN}
  Surjections in the category of CQG algebras are coequalizers.  \qedhere
\end{theoremN}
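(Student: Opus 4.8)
The plan is to leverage the cosemisimplicity built into the definition of a CQG algebra together with the codominion results already established for cosemisimple codomains. By \Cref{def:cqg-alg} and the existence of the Haar functional, every CQG algebra is cosemisimple; moreover its antipode $S$ is bijective, with inverse $*\circ S\circ *$. Thus CQG algebras form a (non-full) subcategory of the Hopf algebras with bijective antipode, the morphisms being the $*$-preserving Hopf maps. Given a surjection $\pi\colon H\to H'$ of CQG algebras, its target $H'$ is cosemisimple, so \Cref{cor:ontocoss} (via the splitting criterion of \Cref{th:onesplit}) applies: $\pi$ is a codominion in the category of Hopf algebras with bijective antipode. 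Unwinding the codominion construction, this says exactly that $\pi$ is the coequalizer, in that ambient category, of the two projections $H\times_{H'}H\rightrightarrows H$ out of its kernel pair.

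It then remains to descend this coequalizer from the ambient category to CQG algebras. I would record the elementary principle that a morphism which is a coequalizer in a larger category of a parallel pair already lying in a subcategory is again a coequalizer in the subcategory, provided every factorization it induces against a subcategory target is itself a subcategory morphism. Here both hypotheses are met. The kernel pair, realized by the cotensor product $H\square_{H'}H\subseteq H\otimes H$, inherits a CQG structure: it is cosemisimple as a subcoalgebra of the cosemisimple $H\otimes H$, the componentwise involution preserves it because $\pi$ is $*$-preserving, and the restricted antipode remains bijective; the two projections are then $*$-maps. And any factorization is automatically a CQG morphism: if $u\colon H\to T$ is a $*$-preserving Hopf map coequalizing the pair, the ambient universal property produces a unique Hopf map $\phi\colon H'\to T$ with $\phi\circ\pi=u$, and because $\pi$ is a surjective $*$-map one computes $\phi(\pi(h)^*)=\phi(\pi(h^*))=u(h^*)=u(h)^*=\phi(\pi(h))^*$, so $\phi$ preserves $*$. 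Uniqueness of $\phi$ as a CQG morphism is inherited from its uniqueness as a Hopf map.

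Putting the two steps together exhibits the surjection $\pi$ as the coequalizer, in the category of CQG algebras, of the projections out of its kernel pair, which is the assertion. The step I expect to require the most care is verifying that the kernel pair computed in Hopf algebras genuinely underlies a CQG algebra with $*$-preserving projections, i.e. that the relevant limit is compatible with the $*$-structure; the formal descent of the universal property is by contrast routine. If one prefers to avoid handling the kernel pair explicitly, an alternative is to extend \Cref{th:same-co-dom} so as to incorporate the $*$-structure, thereby identifying the codominion of $\pi$ computed in CQG algebras with the one computed in Hopf algebras with bijective antipode; the compatibility to be checked there is essentially the same $*$-preservation computation displayed above.
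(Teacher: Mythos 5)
Your opening move is sound: by \Cref{cor:ontocoss} the surjection $\pi$ is a codominion in ${}_{bi}\HAlg$, hence the coequalizer there of the two projections out of its self-pullback, and your observation that any factorization induced against a CQG target through the surjective $*$-map $\pi$ is automatically $*$-preserving is correct and genuinely needed. The gap is the sentence ``the kernel pair, realized by the cotensor product $H\square_{H'}H\subseteq H\otimes H$'': the self-pullback of $\pi$ in ${}_{bi}\HAlg$ (or $\HAlg$, or $\Coalg$) is \emph{not} the cotensor product. In general $H\square_{H'}H$ is not even a subcoalgebra of $H\otimes H$ --- it is a subalgebra and a sub-bicomodule, but the comultiplication of $H\otimes H$ need not map it into $(H\square_{H'}H)\otimes(H\square_{H'}H)$. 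Already for $\pi=\id$ the cotensor product is $\Delta(C)$, which fails to be a subcoalgebra of $C\otimes C$ when $C$ is the $2\times 2$ matrix coalgebra (so, e.g., inside $H=\cO(SU(2))$); your claims that it is cosemisimple ``as a subcoalgebra'' and carries a CQG structure therefore do not stand. Correspondingly, the two ``projections'' $\id\otimes\varepsilon$ and $\varepsilon\otimes\id$ are merely linear maps, not coalgebra (let alone Hopf $*$-algebra) morphisms, so the parallel pair you feed into your descent principle does not lie in $\cqg$, nor in any of the ambient categories, and the principle has nothing to apply to. The true kernel pair in ${}_{bi}\HAlg$ is a large, abstractly constructed limit (for $\pi=\varepsilon:H\to\bC$ it is the product $H\times H$ in $\HAlg$, not $H\otimes H$), and there is no reason its comodules should be unitarizable, i.e.\ no reason for it to be a CQG algebra with $*$-projections; nor would knowing that $\pi$ coequalizes that pair in ${}_{bi}\HAlg$ formally yield the universal property against the pair out of the pullback formed in $\cqg$, which is the comparison that actually matters. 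This is exactly the distinction \Cref{th:coalg-dom} is careful about: its conditions (c) (pullback pair) and (d) (cotensor pair) are related only through a canonical comparison map, plus the nontrivial observation that the vector-space coequalizer of the cotensor pair is already a quotient coalgebra.

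The paper's proof navigates precisely this point. Since $\cqg$ is locally presentable \cite[Theorem 3.1]{chi_cat-cqg}, it has its own pullbacks; one then checks that the \emph{vector-space} coequalizer of the cotensor pair $H\square_{H'}H\rightrightarrows H$ is already a quotient CQG algebra of $H$, and hence also computes the coequalizer in $\cqg$ of the pair out of the $\cqg$-pullback. Thus the codominion in $\cqg$ coincides with the one in $\Coalg$, which is $H\to H'$ itself by \Cref{cor:ontocoss} since $H'$ is cosemisimple. Your closing fallback --- extending \Cref{th:same-co-dom} to keep track of the $*$-structure --- is in substance this very argument, and is the viable route; but the compatibility to be verified there is not the displayed $*$-computation for $\phi$ (which is easy and correct): it is that the relevant coequalizer object, the quotient of $H$ by the ideal and coideal generated by the differences of the cotensor pair (cf.\ \Cref{le:reflok}), is $*$-stable and CQG, together with the availability of pullbacks in $\cqg$ itself.
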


\subsection*{Acknowledgements}

This work was partially supported through NSF grant DMS-2001128. 

I am grateful for assorted comments and pointers to the literature from A. Agore, M. Brion, T. Brzezinski, G. Militaru and R. Wisbauer. 

\section{Preliminaries}\label{se:prel}

Algebras (always associative and unital), coalgebras (coassociative and counital) and the like will typically be assumed linear over a field. Relevant background on coalgebras, bialgebras and Hopf algebras is available in a number of excellent sources such as \cite{swe,montg_hopf,dnr,rad}, and the text contains more specific references to those where needed.

Some prominent notation:
\begin{itemize}

\item As usual (e.g. \cite[Definition 1.5.1]{montg_hopf}), $\Delta$, $\varepsilon$ and $S$ typically denote the comultiplication, counit and respectively antipode of a Hopf algebra (which one will typically be clear from context). 

\item We use {\it Sweedler notation} \cite[Notation 1.4.2]{montg_hopf} for comultiplications and (left or right) comodule structures:
  \begin{equation*}
    \begin{aligned}
      C\ni c &\xmapsto{\quad} c_1\otimes c_2\in C\otimes C\\
      V\ni v &\xmapsto{\quad} v_0\otimes v_1\in V\otimes C\\
      V\ni v &\xmapsto{\quad} v_{-1}\otimes v_0\in C\otimes V.
    \end{aligned}
  \end{equation*}
  
\item $\Alg$, $\Coalg$, $\Bialg$ and $\HAlg$ denote the categories of algebras, coalgebras, bialgebras and Hopf algebras respectively, over a field $\Bbbk$ fixed beforehand and usually implicit.

  We might occasionally adorn the symbols with the field as a subscript, when wishing to emphasize it (e.g. $\Alg_{\Bbbk}$).

\item Various other subscripts modify those categories to indicate additional properties:
  \begin{itemize}
  \item `c' stands for `commutative' (e.g. $\Alg_c$);
  \item `cc' for `cocommutative' \cite[Definition 1.1.3]{montg_hopf} (e.g. $\Bialg_{\Bbbk,cc}$);
  \item while for Hopf algebras, a left-hand subscript might indicate antipode bijectivity or involutivity: ${}_{bi}\HAlg$ is the category of Hopf algebras with bijective antipode, while ${}_{2}\HAlg$ that of Hopf algebras whose antipode squares to the identity.
  \item We will occasionally have to refer to these subscripts in bulk, in which case we use non-alphanumeric symbols such as `$\square$' or `$\bullet$'. $\Alg_{\bullet}$, for instance, indicates both $\Alg$ and $\Alg_c$ in one go, while ${}_{\square}\HAlg_{\bullet}$ refers collectively to ${}_{bi}\HAlg$, $\HAlg_{c}$, $\HAlg_{c,cc}$, etc.

  \item As a variation of this, we might need to limit the possibilities for what one of the subscripts might be, in which case we separate the various options by a `/' symbol: $\HAlg_{c/}$ means the category of either commutative or arbitrary Hopf algebras (but not that of {\it co}commutative Hopf algebras). 
  \end{itemize}
  
\item Categories of modules are depicted as `$\cM$' adorned with subscripts indicating the base algebra, with the handedness of the placement indicating whether they are right or left modules: $\cM_A$, say, means right $A$-modules. This applies also to two-sided structures: ${}^C\cM^{D}$, for instance, is the category of $(C,D)$-bicomodules for coalgebras $C$ and $D$.
  
  One exception will be $\cat{Vec}=\cat{Vec}_{\Bbbk}$ (rather than $\cM_{\Bbbk}$), the category of vector spaces over $\Bbbk$.

\item Similarly, {\it co}module \cite[Definition 1.6.2]{montg_hopf} categories (over coalgebras, bialgebras, etc.) exhibit the base coalgebra as a {\it super}script, again left or right: $\cM^C$ is the category of $C$-comodules.

\item Additional `$f$' subscripts indicate finite-dimensionality: $\cM^H_f$ would be the category of finite-dimensional $H$-comodules.
\end{itemize}

Some category-theoretic terminology will feature quite extensively; we refer the reader to, say, \cite{bw,ahs,mcl}, again with more specific citations as needed. In general, for objects $c,c'\in \cC$ of a category $\cC$, we write $\cC(c,c')$ for their space of morphisms.

One remark that is worth making now, as it pervades more or less the entire discussion, is that apart from those imposing size conditions (i.e. the categories of {\it finite-dimensional} (co)modules), every one of the categories mentioned above ($\Alg$, $\Coalg$, $\HAlg$, their (co)commutative versions, etc. etc.) is extremely well-behaved. Specifically, they are all {\it locally presentable} in the sense of \cite[Definition 1.17]{ar}:

\begin{itemize}
\item Module and comodule categories are {\it Grothendieck} \cite[\S 2.8, Note 3 following Corollary 8.13]{pop} (see \cite[Corollary 2.2.8]{dnr} for categories of comodules), and hence locally presentable \cite[Corollary 5.2]{krau_ausl}.
 
\item The other categories of ``algebraic structures'' (i.e. sets equipped with operations of various arities, satisfying various equations; algebras and commutative algebras, say) this is well known and a uniform consequence of, say \cite[Theorem 3.28]{ar}.

\item For the categories of ``coalgebraic'' or ``mixed'' structures (coalgebras, bialgebras, Hopf algebras) this has been worked out in ample detail in a number of papers of Porst's; see for instance \cite[Lemmas 1 and 2, Theorem 6 and Proposition 22]{porst_formal-2}. 
\end{itemize}

In particular, by \cite[Remark 1.56]{ar} all of these categories
\begin{itemize}
\item are {\it (co)complete} (they have arbitrary (co)limits \cite[Definition 12.2]{ahs});
\item are {\it well-powered} (the isomorphism classes of {\it subobjects} \cite[Definition 7.77]{ahs} of any given object form a set \cite[Definition 7.82]{ahs});
\item and also {\it co-well-powered} (the isomorphism classes of {\it quotient objects} \cite[Definition 7.84]{ahs} of any given object form a set \cite[Definition 7.87]{ahs});
\item and hence satisfy a host of convenient {\it adjoint functor theorems} \cite[\S 18]{ahs}:
  \begin{itemize}
  \item {\it cocontinuous} (i.e. colimit-preserving) functors defined on any of these categories are left adjoints \cite[\S 5.8, Corollary on p.130]{mcl}.
  \item and {\it continuous} (i.e. limit-preserving) functors having them as domains are right adjoints provided they also preserve {\it $\kappa$-directed colimits} \cite[Definition 1.13 (1)]{ar} for some regular cardinal $\kappa$ \cite[Theorem 1.66]{ar}.
  \end{itemize}
\end{itemize}

\section{Epimorphic quantum subgroups and (co)dominions}\label{se:codom}

As we will be concerned with monomorphisms and epimorphisms of bialgebras, Hopf algebras, etc., it will be worth noting that the notions are invariant under a number of convenient modifications. These are very simple remarks, but it will be helpful to have made them explicitly. 

\begin{proposition}\label{pr:commnodiff}

  \begin{enumerate}[(1)]
  \item A morphism $f:H\to H'$ in $\Alg_{\Bbbk}$ is epic in that category if and only if it is epic in any of the following categories which contain it:
    \begin{equation*}
      \Alg_{\bullet},\quad \Bialg_{\bullet},\quad {}_{\square}\HAlg_{\bullet}.
    \end{equation*}

  \item\label{item:5} A morphism $f:H\to H'$ in $\Coalg_{\Bbbk}$ is monic in that category if and only if it is monic in any of the following categories which contain it:
    \begin{equation*}
      \Coalg_{\bullet},\quad \Bialg_{\bullet},\quad {}_{\square}\HAlg_{\bullet}.
    \end{equation*}
  \end{enumerate}

\end{proposition}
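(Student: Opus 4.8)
The unifying principle is to characterize epimorphisms and monomorphisms through self-pushouts and self-pullbacks: $f\colon H\to H'$ is epic in a category $\cX$ exactly when the two coprojections $u_1,u_2\colon H'\to H'\coprod_H H'$ into the pushout formed in $\cX$ agree, and dually $f$ is monic exactly when the two projections out of the self-pullback $H\times_{H'}H$ formed in $\cX$ agree. Every functor relating two of the categories in play --- the structure-forgetting functors $\Bialg\to\Alg$, $\Bialg\to\Coalg$ and the full inclusions among the Hopf, (co)commutative and antipode-restricted variants --- is faithful, and faithful functors reflect both epimorphisms and monomorphisms; this yields at once ``epic in $\Alg$ $\Rightarrow$ epic in the richer category'' and ``monic in $\Coalg$ $\Rightarrow$ monic in the richer category''. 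For the converses it suffices to show that the relevant self-pushout (resp. self-pullback) has the same object and legs whether formed downstairs or upstairs, since faithfulness then detects the coincidence of legs simultaneously.

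The structure-forgetting functors are the clean case. Writing $\Bialg=\mathrm{Comon}(\Alg)=\mathrm{Mon}(\Coalg)$, the forgetful functor to $\Alg$ creates colimits and the one to $\Coalg$ creates limits (part of the local-presentability picture recalled in \Cref{se:prel}); hence self-pushouts and self-pullbacks of a bialgebra map are computed as in $\Alg$ and $\Coalg$ respectively, and epi-/monicity transfer verbatim. The inclusion $\HAlg\hookrightarrow\Bialg$ is full, a bialgebra map between Hopf algebras automatically commuting with antipodes, and moreover the self-pushout and self-pullback of a Hopf morphism stay Hopf: the kernel pair $\{(x,y)\in H\otimes H:f(x)=f(y)\}$ is carried into itself by $S\otimes S$ because $fS=S'f$, so $S\otimes S$ restricts to an antipode, and dually the antipodes of the two copies of $H'$ glue to an antipode on the cokernel pair, the defining relation $u_1f=u_2f$ being respected since $f$ intertwines antipodes; the same computation handles ${}_{bi}\HAlg$ and ${}_{2}\HAlg$. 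Finally one half of the (co)commutativity bookkeeping is free: commutative (bi/Hopf)algebras form a reflective subcategory (via the commutator bi-ideal), hence are closed under limits and harmless for monomorphisms, while cocommutative ones form a coreflective subcategory (the largest cocommutative sub-bi(co)algebra), hence are closed under colimits and harmless for epimorphisms.

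What remains are the two misaligned cases: commutativity for epimorphisms and cocommutativity for monomorphisms. For the first, suppose $f\colon A\to B$ is epic in $\Alg_c$ and let $g,h\colon B\to C$ be any algebra maps with $gf=hf$; then $b\otimes b'\mapsto g(b)h(b')$ is a well-defined $\Bbbk$-linear map $B\otimes_A B\to C$ --- the balancing over $A$ holding precisely because $gf=hf$ occupies the middle slot --- and feeding it the relation $b\otimes 1=1\otimes b$ (which is epicity in $\Alg_c$, the pushout there being $\otimes_A$) gives $g(b)=h(b)$, so $f$ is epic in $\Alg$; the bialgebra and Hopf variants follow by combining this with the structure-forgetting step. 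The second case I would reduce to the first: if $g\ne h\colon E\to C$ satisfy $fg=fh$ in $\Coalg$, a witnessing element lies in a finite-dimensional subcoalgebra by the fundamental theorem of coalgebras, so after restricting to finite-dimensional subcoalgebras $E_0,C_0,D_0=f(C_0)$ one passes to linear duals; here $C_0^*,D_0^*$ are finite-dimensional commutative algebras, monicity of $f$ in $\Coalg_{cc}$ dualizes to epicity of $f_0^*$ in finite-dimensional commutative algebras, and the tensor-product argument above produces $g_0^*=h_0^*$, a contradiction.

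The main obstacle is this last, cocommutative, case for monomorphisms: the naive dualization of the tensor-product trick stumbles because the cotensor product $C\,\square_D\,C$ need not be a subcoalgebra of $C\otimes C$ (limits in $\Coalg$ are not computed on underlying spaces), so the self-pullback in $\Coalg_{cc}$ is only the largest subcoalgebra inside that cotensor and the explicit comparison map need not land in it. The finite-dimensional reduction is exactly what circumvents this, trading the infinite-dimensional pullback for the clean duality between finite-dimensional cocommutative coalgebras and finite-dimensional commutative algebras. The remaining delicate point is the antipode-stability of the kernel and cokernel pairs of a Hopf morphism --- in particular the gluing of the antipodes over the pushout, which one checks via convolution-inverse identities on the generating copies of $H'$ --- and this is precisely where the hypothesis that $f$ intertwines antipodes is essential.
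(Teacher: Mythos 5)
Your overall architecture is sound and the two ``misaligned'' cases are correctly identified, but the Hopf-versus-bialgebra step contains a genuine error on the pullback side. The self-pullback of $f\colon H\to H'$ formed in $\Bialg$ (equivalently in $\Coalg$, since the forgetful functor creates limits) is \emph{not} the vector-space kernel pair $\{(x,y)\ |\ f(x)=f(y)\}$ --- as you yourself note two paragraphs later, limits in $\Coalg$ are not computed on underlying spaces, and the pullback is only the largest subcoalgebra sitting inside the relevant cotensor-type construction. Checking that $S\otimes S$ preserves the naive kernel pair therefore does not show that the $\Bialg$-pullback of a Hopf morphism is Hopf, so your justification proves nothing about the object whose projections must be compared. (Your cokernel-pair argument, by contrast, is essentially fine: colimits in $\Bialg$ are created in $\Alg$, the two legs generate $H'\coprod_H H'$ as an algebra, the glued candidate antipode exists by the universal property since $f$ intertwines antipodes, and the antipode identities propagate along products of elements on which they hold.) The needed fact is true, and the clean repair is exactly what the paper invokes: by \cite[diagram (9)]{porst_formal-2} the inclusions among ${}_{\square}\HAlg_{\bullet}$, $\Bialg_{\bullet}$ and the forgetful functor $\Bialg\to\Coalg$ \cite[\S 2.2]{porst_univ} are faithful right adjoints, hence preserve \emph{and} reflect monomorphisms (dually, faithful left adjoints for the epi statement). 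This single observation dispatches the whole chain $\HAlg_c\to{}_2\HAlg\to{}_{bi}\HAlg\to\HAlg\to\Bialg\to\Coalg$ and its cocommutative twin, so your creation-of-(co)limits and explicit-antipode analysis is both riskier and more work than necessary.

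Where you genuinely diverge from the paper is the residual comparison of $\Coalg_{cc}$ with $\Coalg$, and there your route appears correct. The paper quotes \cite[Theorem 3.5]{nt_torsion} ($f$ is monic in $\Coalg$ iff $\Delta\colon C\to C\square_D C$ is an isomorphism) and observes that for cocommutative coalgebras the cotensor product \emph{is} the self-pullback in $\Coalg_{cc}$, so the same condition characterizes monicity in both categories. You instead locate a witnessing element inside a finite-dimensional subcoalgebra $E_0$, corestrict $f$ to a map $C_0\to D_0=f(C_0)$ of finite-dimensional cocommutative coalgebras (which stays monic, since injections are monic and monomorphisms cancel on the left), dualize to an epimorphism of finite-dimensional commutative algebras, and then run the classical balanced-map argument $b\otimes b'\mapsto g(b)h(b')$ on $C_0^*\otimes_{D_0^*}C_0^*$ to upgrade commutative epicity to epicity against the arbitrary (noncommutative) target $E_0^*$; since that pushout is finite-dimensional, epicity in the finite-dimensional commutative category does yield the relation $b\otimes 1=1\otimes b$. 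This avoids \cite{nt_torsion} entirely at the cost of a duality detour, and is in fact the same reduce-to-finite-dimensions-and-dualize device the paper itself uses to prove \Cref{le:cocomono}. Your $B\otimes_A B$ trick on the algebra side likewise matches, in substance, the characterization of algebra epimorphisms via $B\to B\otimes_A B$ \cite[\S X.1, Proposition 1.2]{stens_quot} underlying the paper's dual treatment of part (1). So: a valid and somewhat more elementary alternative on the (co)commutativity comparisons, but the Hopf/bialgebra pullback step must be replaced by the adjointness (or reflectivity) citation before the proof stands.
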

\begin{proof}

  
  We focus on \Cref{item:5}, to fix ideas; the other half is entirely analogous (and categorically dual).
  
  The functors
  \begin{equation*}
    \HAlg_c
    \xrightarrow[]{\text{inclusion}}
    {}_2\HAlg
    \xrightarrow[]{\text{inclusion}}
    {}_{bi}\HAlg
    \xrightarrow[]{\text{inclusion}}
    \HAlg
    \xrightarrow[]{\text{inclusion}}
    \Bialg
    \xrightarrow[]{\text{forget}}
    \Coalg
  \end{equation*}
  are right adjoints: see \cite[diagram (9)]{porst_formal-2} for the inclusions and \cite[\S 2.2]{porst_univ} for the last arrow. They thus preserve monomorphisms \cite[Proposition 18.6]{ahs}, and also reflect them because they are faithful.
  

  Exactly the same reasoning applies to
  \begin{itemize}
  \item the inclusion functors from commutative bialgebras (Hopf algebras) to plain bialgebras (respectively Hopf algebras), which are also faithful right adjoints by \cite[diagram (9)]{porst_formal-2};
  \item and the chain of functors
    \begin{equation*}
      \HAlg_{c,cc}
      \xrightarrow[]{\text{inclusion}}
      \HAlg_{cc}
      \xrightarrow[]{\text{inclusion}}
      \Bialg_{cc}
      \xrightarrow[]{\text{forget}}
      \Coalg_{cc}.
    \end{equation*}
  \end{itemize}
  The only claim left to verify, then, is that a morphism $f:H\to H'$ of cocommutative coalgebras is monic in $\Coalg_{cc}$ if and only if it is so in $\Coalg$.

  $f$ is monic in $\Coalg$ if and only if the comultiplication $\Delta_{H}$ induces an isomorphism
  \begin{equation*}
    H\xrightarrow[\cong]{\quad\Delta_H\quad}H\square_{H'}H
  \end{equation*}
  \cite[Theorem 3.5]{nt_torsion}. Note, though, that that cotensor product is nothing but the self-pullback of $f$ in $\Coalg_{cc}$ (very similar to the remark that the products in $\Coalg_{cc}$ are the tensor products \cite[Theorem 6.4.5]{swe}), so said condition is {\it also} equivalent to $f$ being monic in $\Coalg_{cc}$: it is a formal consequence of the definition that for a morphism $f:c\to c'$ in any category $\cC$ the following conditions are equivalent:
  \begin{itemize}
  \item $f$ is monic;
  \item the diagram
    \begin{equation*}
      \begin{tikzpicture}[auto,baseline=(current  bounding  box.center)]
        \path[anchor=base] 
        (0,0) node (l) {$c$}
        +(2,.5) node (u) {$c$}
        +(2,-.5) node (d) {$c$}
        +(4,0) node (r) {$c'$}
        ;
        \draw[->] (l) to[bend left=6] node[pos=.5,auto] {$\scriptstyle \id$} (u);
        \draw[->] (u) to[bend left=6] node[pos=.5,auto] {$\scriptstyle f$} (r);
        \draw[->] (l) to[bend right=6] node[pos=.5,auto,swap] {$\scriptstyle \id$} (d);
        \draw[->] (d) to[bend right=6] node[pos=.5,auto,swap] {$\scriptstyle f$} (r);
      \end{tikzpicture}
    \end{equation*}
    is a pullback;
  \item the self-pullback $c\times_{c'}c$ exists and either one of the two morphisms $c\times_{c'}c\to c$ is an isomorphism;
  \item the self-pullback $c\times_{c'}c$ exists and the two morphisms $c\times_{c'}c\to c$ are equal;
  \item the self-pullback $c\times_{c'}c$ exists and the diagonal map $c\to c\times_{c'}c$ is an isomorphism. 
  \end{itemize}
\end{proof}

In reference to the last part of the proof of \Cref{pr:commnodiff}, recall \cite[Theorem 3.6]{nt_torsion} that in $\Coalg_{cc}$ the monomorphisms are precisely the injections. Slightly more is true:

\begin{lemma}\label{le:cocomono}
  A $\Coalg$-monomorphism with cocommutative codomain is an embedding. 
\end{lemma}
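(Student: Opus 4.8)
The plan is to reduce to finite dimensions, dualize into a statement about algebra epimorphisms, and then prove that an algebra epimorphism with commutative domain is surjective.

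First I would reduce to the finite-dimensional case. Since every coalgebra is the directed union of its finite-dimensional subcoalgebras, a coalgebra map is injective precisely when its restriction to each finite-dimensional subcoalgebra is. So fix a finite-dimensional subcoalgebra $C_0\le C$ and put $D_0:=f(C_0)$, a finite-dimensional subcoalgebra of $D$ and hence cocommutative. The corestriction $g:C_0\to D_0$ satisfies $\iota_{D_0}\circ g=f\circ\iota_{C_0}$; the right-hand side is a composite of $\Coalg$-monomorphisms (injections are monic, and $f$ is monic by hypothesis), so $g$ is itself monic in $\Coalg$ by cancellation. We are thereby reduced to showing that a $\Coalg$-monomorphism $g:C_0\to D_0$ of finite-dimensional coalgebras with $D_0$ cocommutative is injective.

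Next I would dualize. By \cite[Theorem 3.5]{nt_torsion}, $g$ is monic exactly when $\Delta_{C_0}:C_0\to C_0\square_{D_0}C_0$ is an isomorphism. Applying linear duality (the contravariant equivalence $\Coalg_f\simeq\Alg_f^{\mathrm{op}}$), write $A:=D_0^*$ and $B:=C_0^*$; then $A$ is commutative because $D_0$ is cocommutative, the cotensor product dualizes to $B\otimes_A B$, and $\Delta_{C_0}^*$ becomes the multiplication $B\otimes_A B\to B$. Hence ``$g$ monic'' says exactly that $A\to B$ is an algebra epimorphism, and ``$g$ injective'' says that $A\to B$ is surjective. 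The lemma thus reduces to the purely algebraic assertion: an epimorphism $A\to B$ of algebras with $A$ commutative and $B$ finite-dimensional is surjective.

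This algebraic statement is the main obstacle. I would first replace $A$ by its image, a commutative subalgebra through which the epimorphism still factors epimorphically, and so assume $A\le B$ with the goal $A=B$. The crucial point is that $A$ is then central in $B$: since $A\to B$ is epic, restriction of scalars ${}_B\cM\to{}_A\cM$ is fully faithful, so right multiplication gives an isomorphism $B^{\mathrm{op}}\xrightarrow{\cong}\End_A({}_AB)$; as $A$ is commutative, each left multiplication $L_a$ also lies in $\End_A({}_AB)$ and must therefore equal a right multiplication, which evaluating at $1$ forces to be $R_a$, i.e.\ $a$ is central. With $A$ central and Artinian I would decompose along its idempotents, writing $A$ as a product of local rings, to assume $A$ local with nilpotent maximal ideal $\fm$ and residue field $\Bbbk'=A/\fm$. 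Base change along $A\to\Bbbk'$ keeps the map epic, so $\Bbbk'\to B/\fm B$ is an epimorphism out of a field to a nonzero (by Nakayama) finite-dimensional algebra, hence an isomorphism by comparing dimensions in $E\otimes_{\Bbbk'}E\cong E$; thus $B/\fm B=\Bbbk'$. Nakayama's lemma then gives $B=A\cdot 1=A$, which finishes the finite-dimensional case and hence the lemma. The two steps I expect to need the most care are the centrality argument and the verification that the finite-dimensional duality carries the $\Delta$-isomorphism criterion precisely onto the tensor-product (epimorphism) criterion for $A\to B$.
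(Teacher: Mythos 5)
Your proof is correct, and its first two steps (reduction to finite-dimensional subcoalgebras via the cancellation argument, then dualization through $\Coalg_f\simeq\Alg_f^{\mathrm{op}}$ carrying the $\Delta$-isomorphism criterion of \cite[Theorem 3.5]{nt_torsion} onto the $B\otimes_AB\cong B$ criterion of \cite[\S X.1, Proposition 1.2]{stens_quot}) coincide with the paper's. Where you genuinely diverge is the final algebraic step: the paper disposes of it instantly by citing Isbell's structural theorem \cite[\S 2.11]{isb_epidom-4}, which says that any algebra admitting a proper finite-dimensional epimorphic extension contains a copy of the $2\times 2$ upper-triangular matrices and hence cannot be commutative; you instead prove the contrapositive directly, and your chain of deductions is sound --- centrality of the image via the full faithfulness of restriction of scalars (so that $L_a\in\End_A({}_AB)\cong B^{\mathrm{op}}$ forces $L_a=R_a$), the Artinian splitting along central idempotents with epimorphisms preserved under pushout to each local factor, the dimension count in $E\otimes_{\Bbbk'}E\cong E$ killing the residual extension, and Nakayama (with $\fm$ nilpotent, so no finiteness worries) applied twice, first to see $B/\fm B\neq 0$ and then to collapse $B/A$. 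The trade-off: Isbell's result is much stronger (it pinpoints the triangular-matrix obstruction inside \emph{any} proper finite-dimensional epimorphic extension, commutative domain or not), making the paper's proof a two-line reduction to the literature, whereas your argument is self-contained and elementary, proving exactly the commutative case needed from standard facts about ring epimorphisms --- a reasonable exchange if one prefers not to import Isbell's machinery, at the cost of a longer proof and of obtaining only the special case.
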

\begin{proof}
  As in the proof of \cite[Theorem 3.6]{nt_torsion}, the claim reduces to the case when all coalgebras in sight are finite-dimensional. But then one can dualize the statement to address (epi)morphisms of finite-dimensional algebras, whence the conclusion by \cite[\S 2.11]{isb_epidom-4}: an algebra admitting a finite-dimensional epimorphic extension contains a copy of the $2\times 2$ upper-triangular matrices, so cannot be commutative.
\end{proof}

\Cref{le:cocomono} does indeed imply \cite[Theorem 3.6]{nt_torsion}, since by \Cref{pr:commnodiff} \Cref{item:5} a morphism in $\Coalg_{cc}$ is monic if and only if it is so in $\Coalg$. 

\begin{remark}\label{re:epico-monoalg}
  By contrast to the interesting algebra epimorphisms and coalgebra monomorphisms in focus here, the algebra {\it mono}morphisms and coalgebra {\it epi}morphisms are precisely the injective (respectively surjective) morphisms in the relevant category. This is because the forgetful functors
  \begin{equation*}
    \Alg\to \cat{Vec}
    \quad\text{and}\quad
    \Coalg\to \cat{Vec}
  \end{equation*}
  are faithful and right (respectively left) adjoint \cite[\S\S 2.6, 2.7]{porst_bimon}, so they preserve and reflect monomorphisms (respectively epimorphisms)  \cite[Proposition 18.6]{ahs}.   
\end{remark}

Since it is, furthermore, occasionally convenient to extend scalars, it is worth noting that the properties of interest are invariant under such extensions (or contractions).

\begin{lemma}\label{le:fieldext}
  Let $\Bbbk\subset \Bbbk'$ be a field extension. The property of a morphism $f$ of algebras, coalgebras, bialgebras or Hopf algebras over $\Bbbk$ of being monic or epic is equivalent to the corresponding property for the scalar extension $f\otimes_{\Bbbk}\Bbbk'$. 
\end{lemma}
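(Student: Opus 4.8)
The plan is to reduce all four monic/epic cases to two core statements and then exploit that $\Bbbk'$ is free, hence faithfully flat, over $\Bbbk$. First I would isolate the guiding principle: for a $\Bbbk$-linear map $\phi$, the extension $\phi\otimes_{\Bbbk}\Bbbk'$ is injective, surjective, or bijective exactly when $\phi$ is, since $-\otimes_{\Bbbk}\Bbbk'$ is exact and faithful on $\cat{Vec}$; and an isomorphism in any of $\Coalg$, $\Alg$, $\Bialg$, $\HAlg$ is simply a bijective morphism (the set-theoretic inverse is automatically a morphism). Two of the four combinations are then immediate: by \Cref{re:epico-monoalg} an algebra map is monic precisely when injective and a coalgebra map is epic precisely when surjective, and both properties transfer back and forth along $-\otimes_{\Bbbk}\Bbbk'$ by the previous sentence.

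Next I would use \Cref{pr:commnodiff} to collapse the remaining cases. A bialgebra or Hopf-algebra morphism is monic in its category iff it is monic in $\Coalg$, and epic iff it is epic in $\Alg$; the same equivalences hold verbatim over $\Bbbk'$, and scalar extension sends bialgebra/Hopf morphisms to bialgebra/Hopf morphisms. Hence it suffices to prove: (a) $f$ is monic in $\Coalg_{\Bbbk}$ iff $f\otimes_{\Bbbk}\Bbbk'$ is monic in $\Coalg_{\Bbbk'}$; and (b) $f$ is epic in $\Alg_{\Bbbk}$ iff $f\otimes_{\Bbbk}\Bbbk'$ is epic in $\Alg_{\Bbbk'}$.

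For (a), recall from \cite[Theorem 3.5]{nt_torsion} that $f:H\to H'$ is monic in $\Coalg$ iff the comultiplication induces an isomorphism $H\xrightarrow{\ \cong\ }H\square_{H'}H$ onto the self-cotensor product. That cotensor product is by definition the equalizer of the two $\Bbbk$-linear maps $H\otimes H\rightrightarrows H\otimes H'\otimes H$ built from the $H'$-coactions, and $-\otimes_{\Bbbk}\Bbbk'$, being exact and (strong) monoidal on vector spaces, carries this equalizer to the corresponding one computing $(\Bbbk'\otimes H)\square_{\Bbbk'\otimes H'}(\Bbbk'\otimes H)$. Under this identification $\Bbbk'\otimes\Delta_H$ is precisely the diagonal attached to $f\otimes_{\Bbbk}\Bbbk'$, so it is an isomorphism iff $\Delta_H$ is, by faithful flatness; this is (a). For (b) I would instead use the dual of the pullback list in the proof of \Cref{pr:commnodiff}: $f$ is epic iff the codiagonal $H'\coprod_H H'\to H'$ out of the self-pushout is an isomorphism. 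Here $-\otimes_{\Bbbk}\Bbbk':\Alg_{\Bbbk}\to\Alg_{\Bbbk'}$ is left adjoint to restriction of scalars, hence preserves this pushout and its codiagonal, and faithful flatness once more transfers the isomorphism condition, yielding (b).

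The main obstacle is the compatibility of scalar extension with these universal constructions, carried out without circularity. For epimorphisms this is clean, as the left-adjointness of $-\otimes_{\Bbbk}\Bbbk'$ on $\Alg$ is elementary. For monomorphisms I would deliberately avoid invoking the general right-adjointness of $-\otimes_{\Bbbk}\Bbbk'$ on $\Coalg$ (that is \Cref{th:fieldextfinlim}, established later) and rely only on the explicit cotensor presentation together with the flatness of $\Bbbk'$, keeping the argument self-contained at this point. The one genuinely load-bearing verification is a naturality check: that the identifications of $\Bbbk'\otimes(H\square_{H'}H)$ and of $\Bbbk'\otimes(H'\coprod_H H')$ with the cotensor product and amalgamated product of the extended objects are the ones under which the extended diagonal and codiagonal become the diagonal and codiagonal of $f\otimes_{\Bbbk}\Bbbk'$ — bookkeeping that flatness renders routine.
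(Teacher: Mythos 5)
Your proposal is correct and follows essentially the same route as the paper: reduce the bialgebra/Hopf cases to $\Coalg$ and $\Alg$ via \Cref{pr:commnodiff}, dispatch algebra monomorphisms and coalgebra epimorphisms through injectivity/surjectivity and faithful flatness on $\cat{Vec}$, and settle coalgebra monomorphisms with the cotensor criterion of \cite[Theorem 3.5]{nt_torsion}, which flat base change along $\Bbbk\le\Bbbk'$ preserves and reflects. The only divergence is the algebra-epi case, where the paper quotes the criterion that $A\to B$ is epic iff $B\to B\otimes_AB$ is an isomorphism \cite[\S X.1, Proposition 1.2]{stens_quot} (a construction faithfully flat extension handles directly), whereas you use the self-pushout codiagonal together with left-adjointness of $\Bbbk'\otimes_{\Bbbk}-$ on $\Alg$ --- an equivalent and equally sound variant, and you rightly avoid the circularity of invoking \Cref{th:fieldextfinlim}.
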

\begin{proof}
  By \Cref{pr:commnodiff} the claims about bialgebras or Hopf algebras reduce to (co)algebras, so we focus on these.
  
  For algebras being monic is equivalent to injectivity: a surjective morphism $A\to B$ is the coequalizer of the two maps $A\times_BA\to A$; dually, for coalgebras being epic is equivalent to surjectivity \cite[Theorem 3.1]{nt_torsion}. We are thus reduced to considering
  \begin{itemize}
  \item epimorphisms $A\to B$ of algebras, equivalent to the multiplication map $B\to B\otimes_AB$ being isomorphisms \cite[\S X.1, Proposition 1.2]{stens_quot};
  \item and monomorphisms $C\to D$ of coalgebras, equivalent to the coproduct map $C\to C\square_D C$ being an isomorphism \cite[Theorem 3.5]{nt_torsion}, recalling the {\it cotensor product} \cite[Definition 8.4.2]{montg_hopf}
    \begin{equation*}
      V\square_D W:=\{x\in V\otimes W\ |\ (\rho_V\otimes\id_W)x = (\id_V\otimes\rho_W)x\in V\otimes D\otimes W\}
    \end{equation*}
    of right and left comodules
    \begin{equation*}
      \rho_V:V\to V\otimes C
      \quad\text{and}\quad
      \rho_W:W\to C\otimes W.
    \end{equation*}
  \end{itemize}
  Being a {\it faithfully flat} \cite[\S 4I]{lam_mod-rng} extension, $\Bbbk'\otimes_{\Bbbk}-$ preserves and reflects everything in sight: (c)tensor products, exactness, surjectivity, injectivity, etc.
\end{proof}

In the sequel, for a right comodule $V$ over a bialgebra $H$, we write
\begin{equation*}
  V^H:=\{v\in V\ |\ v_0\otimes v_1=v\otimes 1\in V\otimes H\}
\end{equation*}
for its space of {\it $H$-coinvariants}. 

\begin{theorem}\label{th:monohopf}
  For a morphism $\pi:H\to H'$ in $\Coalg$ the following conditions are equivalent:
  \begin{enumerate}[(a)]
  \item\label{item:monic} $\pi$ is monic;
  \item\label{item:full} The corestriction functor induced by $\pi$ between categories of (finite-dimensional) comodules is full.
  \item\label{item:idemp} For any (finite-dimensional) $H$-comodule $V$ every idempotent morphism on $\cM^{H'}$ is in $\cM^H$.
  \item\label{item:dsum} For any (finite-dimensional) $H$-comodule $V$ every direct-sum decomposition $V\cong V_1\oplus V_2$ as $H'$-comodules is also one of $H$-comodules.

    If furthermore $\pi$ is a morphism in ${}_{\square}\HAlg$, these are also equivalent to
  \item\label{item:coinvall} For any (finite-dimensional) right $H$-comodule $V\in \cM^H$, the $H'$-coinvariants are also $H$-coinvariants.
  \item\label{item:coinvh} The only $H'$-coinvariants in $H$ are the scalars.
  \end{enumerate}  
  
\end{theorem}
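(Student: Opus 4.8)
The plan is to make the equivalence \Cref{item:monic}$\Leftrightarrow$\Cref{item:full} the backbone and deduce everything else from it. Corestriction along $\pi$ is a functor $F:\cM^H\to\cM^{H'}$ sending $\rho_V$ to $(\id\otimes\pi)\rho_V$; since it is the identity on underlying linear maps it is faithful, so condition \Cref{item:full} is exactly the assertion that $F$ is fully faithful. I would exhibit $F$ as the left adjoint of the cotensor functor $G=-\,\square_{H'}H:\cM^{H'}\to\cM^H$, where $H$ is the $(H',H)$-bicomodule with left coaction $(\pi\otimes\id)\Delta$ and right coaction $\Delta$, the unit being $\eta_V=\rho_V:V\to FV\,\square_{H'}H$. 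Then $F$ is fully faithful iff $\eta_V$ is an isomorphism for every $V$, while at $V=H$ the unit $\eta_H$ is precisely $\Delta_H:H\to H\,\square_{H'}H$; by \cite[Theorem 3.5]{nt_torsion} this is an isomorphism iff $\pi$ is monic. Thus \Cref{item:full}$\Rightarrow$\Cref{item:monic} is immediate, the universal case subsuming $V=H$.

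The main obstacle is the converse propagation \Cref{item:monic}$\Rightarrow$\Cref{item:full}, i.e. upgrading ``$\eta_H$ an isomorphism'' to ``$\eta_V$ an isomorphism for all $V$''. Here I would use that $\eta$ is a natural transformation between the left-exact functors $\id$ and $GF$ ($F$ is exact and $G$ is left exact). Since $\eta$ commutes with coproducts and $\eta_H$ is invertible, $\eta$ is an isomorphism on every cofree comodule $\underline{V}\otimes H\cong H^{\oplus\dim_\Bbbk V}$; cofree comodules are injective and every comodule admits a copresentation $0\to V\to I^0\to I^1$ by cofree ones, so a five-lemma chase forces $\eta_V$ to be an isomorphism. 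Equivalently, one may reduce everything to finite dimensions --- passing freely between the parenthetical and unrestricted readings, as comodules are filtered unions of finite-dimensional subcomodules and Hom-spaces, idempotents and coinvariants are all detected there --- and dualize to the classical characterization of algebra epimorphisms by fullness of module restriction.

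The remaining coalgebra equivalences are formal. A direct-sum decomposition as $H'$-comodules is the same datum as an idempotent $H'$-comodule endomorphism, giving \Cref{item:idemp}$\Leftrightarrow$\Cref{item:dsum}, and fullness \Cref{item:full} trivially yields \Cref{item:idemp}. For \Cref{item:idemp}$\Rightarrow$\Cref{item:full}, given an $H'$-comodule map $T:FV\to FW$ I would form the idempotent $\left(\begin{smallmatrix}0&0\\T&1\end{smallmatrix}\right)$ on $F(V\oplus W)$; it is an $H'$-comodule endomorphism, hence by \Cref{item:idemp} an $H$-comodule map, and its off-diagonal block is exactly $T$, so $T\in\cM^H$.

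Finally, for the Hopf-algebra conditions I would identify coinvariants with homs out of the trivial comodule: $V^H=\Hom_H(\Bbbk,V)$ and $V^{H'}=\Hom_{H'}(\Bbbk,FV)$. Fullness \Cref{item:full} then gives \Cref{item:coinvall} at once, and specializing \Cref{item:coinvall} to the regular comodule $V=H$ (detecting a coinvariant of $H$ on a finite-dimensional right coideal) yields \Cref{item:coinvh}, using $H^H=\Bbbk$. The one genuinely new point is \Cref{item:coinvh}$\Rightarrow$\Cref{item:coinvall}: embedding a finite-dimensional $V$ as an $H$-subcomodule of $\underline{V}\otimes H\cong H^{\oplus\dim_\Bbbk V}$ via its coaction, and computing $(\underline{V}\otimes H)^{H'}=\underline{V}\otimes H^{H'}=\underline{V}=(\underline{V}\otimes H)^{H}$ from \Cref{item:coinvh}, so that $V^{H'}=V\cap(\underline{V}\otimes H)^{H'}=V\cap(\underline{V}\otimes H)^{H}=V^H$ by left-exactness of coinvariants. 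To close the loop back to \Cref{item:full} I would invoke the Hopf hypothesis: antipode compatibility $\pi S_H=S_{H'}\pi$ (which holds in every category collected under ${}_{\square}\HAlg$) makes $F$ commute with comodule duals, whence $\Hom_{H'}(FV,FW)=(V^*\otimes W)^{H'}$, and \Cref{item:coinvall} applied to $V^*\otimes W$ returns $(V^*\otimes W)^{H}=\Hom_H(V,W)$.
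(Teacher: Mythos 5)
Your proposal is correct, and its overall skeleton coincides with the paper's: (a)$\Leftrightarrow$(b) as the backbone, the formal equivalences (b)$\Rightarrow$(c)$\Leftrightarrow$(d) and back via an idempotent carrying the given map in a corner (your idempotent $\left(\begin{smallmatrix}0&0\\T&1\end{smallmatrix}\right)$ is exactly projection along the graph $\Gamma_{-T}$, so it is the same device as the paper's (d)$\Rightarrow$(b) graph argument), the Hopf-level conditions via $V^H\cong \cM^H(\Bbbk,V)$ and $\cM^{H}(V,W)\cong (W\otimes V^*)^H$ (both you and the paper use automatic antipode-compatibility of $\pi$ here), and (f)$\Rightarrow$(e) by embedding a comodule into a direct sum of copies of $H$ --- where the paper cites \cite[Proposition 2.4.3]{dnr} and you make the embedding explicit as $\rho_V:V\to \underline{V}\otimes H$, with the left-exactness of coinvariants doing the rest. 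The one substantive divergence is at the backbone: the paper outsources (a)$\Leftrightarrow$(b) entirely to \cite[Theorem 3.5]{nt_torsion} (expanded in \cite[Theorem 2.1]{agore_mono}), whereas you reprove it, using \cite{nt_torsion} only through the criterion ``$\pi$ monic iff $\Delta_H:H\to H\square_{H'}H$ is an isomorphism'': you exhibit corestriction $F$ as left adjoint to $-\square_{H'}H$, observe that fullness of the faithful $F$ is invertibility of the unit, compute $\eta_H=\Delta_H$, and propagate invertibility from $H$ to all comodules via compatibility with direct sums, cofree copresentations $0\to V\to I^0\to I^1$, left-exactness of $GF$, and a kernel chase. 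This buys a self-contained proof of the hardest implication (a)$\Rightarrow$(b) at the cost of a page of adjunction bookkeeping, and it is sound: the unit-criterion for full faithfulness, the left-exactness of cotensor over a field, and the commutation of $-\square_{H'}H$ with coproducts all check out. One caution: your parenthetical alternative (``reduce to finite dimensions and dualize to the classical characterization of algebra epimorphisms'') is the only shaky point --- monomorphy in $\Coalg$ does not obviously localize to finite-dimensional subcoalgebras, and the dualization would need care for infinite-dimensional $H$ --- but nothing in your main line of argument depends on it.
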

\begin{proof}
  \begin{enumerate}[]
  \item {\bf (Passage to Hopf algebras)} We know from \Cref{pr:commnodiff} \Cref{item:5} that a morphism of Hopf algebras is monic if and only if it is monic in the category of coalgebras.

  \item {\bf \Cref{item:monic} $\Longleftrightarrow$ \Cref{item:full}} For arbitrary comodules, this is part of \cite[Theorem 3.5]{nt_torsion} (or its expansion in \cite[Theorem 2.1]{agore_mono}). Because comodules (of a coalgebra over a field) are unions of finite-dimensional subcomodules \cite[Theore 5.1.1]{mon}, the fullness of $\cM^H\to \cM^{H'}$ is equivalent to the fullness of the analogous functor $\cM^H_f\to \cM^{H'}_f$ between categories of {\it finite-dimensional} comodules.

    This same argument will allow us, in general, to obtain the mutual equivalence between the various statements on arbitrary comodules and their finite-dimensional versions; we will thus not belabor that point.

  \item {\bf \Cref{item:full} $\Longrightarrow$ \Cref{item:idemp}} This is obvious: just apply \Cref{item:full} to the idempotent morphism in question.
    
  \item {\bf \Cref{item:idemp} $\Longleftrightarrow$ \Cref{item:dsum}} A direct-sum decomposition $V\cong V_1\oplus V_2$ is nothing but a choice of idempotent endomorphism on $V$ in the relevant category (with range $V_1$ and kernel $V_2$), so the two are plainly alternative phrasings of a common condition.

  \item {\bf \Cref{item:dsum} $\Longrightarrow$ \Cref{item:full}} Consider $V,W\in \cM^H$, and a linear map $f:V\to W$, $H'$-colinear. The graph
    \begin{equation*}
      \Gamma_f:=\{(v,fv)\ |\ v\in V\} \le V\oplus W
    \end{equation*}
    is then an $H'$-subcomodule, with supplementary summand $W$. Assumption \Cref{item:dsum} then ensures that $\Gamma_f$ is also an $H$-subcomodule of $V\oplus W$, whence the conclusion that $f$ is also $H$-colinear.

    Assume henceforth that $\pi$ is a Hopf-algebra morphism.

  \item {\bf \Cref{item:full} $\Longleftrightarrow$ \Cref{item:coinvall}} Since for $H$-comodules $V$ we have a functorial identification
    \begin{equation*}
      V^H\cong \cM^H(\Bbbk,V),
    \end{equation*}
    fullness implies \Cref{item:coinvall}.

    To verify the converse, note that for finite-dimensional comodules $V,W\in \cM^H_f$ we have
    \begin{equation*}
      \cM^H(V,W)\cong (W\otimes V^*)^H,
    \end{equation*}
    i.e. the $H$-coinvariants of the tensor product of $W$ and the dual of $V$. $H$-comodule morphisms can thus be recast as coinvariants, so \Cref{item:coinvall} does indeed imply \Cref{item:full}.

  \item {\bf \Cref{item:coinvall} $\Longleftrightarrow$ \Cref{item:coinvh}} The latter is nothing but \Cref{item:coinvall} applied to the $H$-comodule $H$ and thus formally weaker than that condition. Conversely, it is enough to recall \cite[Proposition 2.4.3]{dnr} that every $H$-comodule can be embedded in a direct sum of copies of $H$.  \qedhere
  \end{enumerate}
\end{proof}

\Cref{cor:epialggp} confirms, for algebraic groups, some speculative remarks made in the context of studying epimorphisms of {\it Lie algebras} on \cite[pp.13-14]{bg_epi-lie}; see also \cite[Th\'eor\`eme 1]{bb1} for connected algebraic groups and \cite[Theorem 1]{brion_epi} for the general statement (to which \Cref{th:monohopf} specializes when the Hopf algebras in question are commutative).

\begin{corollary}\label{cor:epialggp}
  A closed embedding $\bH\le \bG$ is an epimorphism in the category of linear algebraic groups over a field if and only if the only the only $\bH$-invariants in $\cO(\bG)$ are the scalars. 
\end{corollary}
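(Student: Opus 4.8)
The plan is to transport the statement across the (contravariant) Waterhouse equivalence \cite[\S 1.4, Theorem]{water_bk} between linear algebraic groups over $\Bbbk$ and commutative Hopf $\Bbbk$-algebras, and then simply read it off from \Cref{th:monohopf}. Under that equivalence the closed embedding $\iota:\bH\le \bG$ corresponds to the surjection $\pi:=\cO(\iota):\cO(\bG)\twoheadrightarrow\cO(\bH)$ of commutative Hopf algebras, and --- because the equivalence is \emph{contra}variant --- $\iota$ is epic in the category of linear algebraic groups precisely when $\pi$ is monic in $\HAlg_c$. Since $\pi$ is in particular a morphism of (commutative) Hopf algebras, the relevant portion of \Cref{th:monohopf} is available, and the equivalence \Cref{item:monic} $\Longleftrightarrow$ \Cref{item:coinvh} identifies monicity of $\pi$ with the condition $H^{H'}=\Bbbk$ for $H=\cO(\bG)$ and $H'=\cO(\bH)$.

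It then remains to recognize the coinvariant space $H^{H'}$ as the algebra of $\bH$-invariant regular functions on $\bG$. Here $H=\cO(\bG)$ carries the right $H'=\cO(\bH)$-comodule structure $(\id\otimes\pi)\circ\Delta_{\cO(\bG)}$, which dualizes the right-translation map $\bG\times\bH\to\bG$, $(g,h)\mapsto gh$ obtained by restricting the multiplication of $\bG$ along $\bH\le\bG$. A function $f$ lies in $H^{H'}$ exactly when $f_1\otimes\pi(f_2)=f\otimes 1$, which is the algebraic rendering of $f(gh)=f(g)$; that is, $H^{H'}$ is precisely the subalgebra $\cO(\bG)^{\bH}$ of $\bH$-invariants. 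Substituting this identification into $H^{H'}=\Bbbk$ delivers the corollary.

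The one point deserving genuine care is this last translation of comodule coinvariants into invariant functions, and in particular the claim that $(\id\otimes\pi)\Delta$ really does encode translation by $\bH$. Over an arbitrary (possibly non-algebraically-closed, non-perfect) field one cannot argue pointwise over $\Bbbk$-points alone, so the clean route is either to verify the equality $f_1\otimes\pi(f_2)=f\otimes 1$ directly at the Hopf-algebra level, or equivalently to test it on $R$-points for every commutative $\Bbbk$-algebra $R$ via the functor of points, after which the invariance reading is unambiguous. I expect this bookkeeping to be the main (though routine) obstacle; the left/right asymmetry, by contrast, is immaterial, since composing with the antipode of $\cO(\bG)$ interchanges the two translation actions and hence the two notions of invariants.
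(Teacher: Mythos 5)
Your proposal is correct and follows essentially the same route as the paper, which likewise dualizes the closed embedding to the surjection $\cO(\bG)\to\cO(\bH)$ and invokes \Cref{th:monohopf} (the equivalence \Cref{item:monic} $\Longleftrightarrow$ \Cref{item:coinvh}); the paper treats the corollary as an immediate application, leaving implicit both the passage between monicity in $\HAlg_c$ and in $\Coalg$ (which is \Cref{pr:commnodiff} \Cref{item:5}, absorbed into the proof of \Cref{th:monohopf}) and the identification of the coinvariants $\cO(\bG)^{\cO(\bH)}$ with the $\bH$-invariant regular functions, both of which you spell out correctly, including the functor-of-points caveat over non-algebraically-closed fields and the antipode argument disposing of the left/right asymmetry.
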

\begin{proof}
  This is an immediate application of \Cref{th:monohopf}, noting that a closed embedding $\bH\le \bG$ of linear algebraic groups will be an epimorphism precisely when the dual surjection $\cO(\bG)\to \cO(\bH)$ of Hopf algebras of regular functions is a monomorphism.
\end{proof}

\cite[Corollary 2.2]{chi_epi} provides examples of surjective, non-injective monomorphisms of Hopf algebras of the form
\begin{equation*}
  H\xrightarrow[]{\quad\text{antipode}\quad}H^{op,cop}
\end{equation*}
whenever the antipode is surjective but non-injective (e.g. as in \cite[Theorem 3.2]{schau_counter}), but nothing as exotic as that is needed to exhibit such pathological monomorphisms: 

\begin{corollary}\label{cor:parab}
  Let $\bG$ be a connected linear algebraic group over a field and $\bP\le \bG$ a parabolic subgroup. 

  The corresponding surjection $\cO(\bG)\to \cO(\bP)$ of algebras of regular functions is a monomorphism in the category of Hopf algebras.
\end{corollary}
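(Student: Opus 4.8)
The plan is to read this off \Cref{cor:epialggp} (equivalently \Cref{th:monohopf}), which reduces the claim to a purely geometric statement about invariants. By the very proof of \Cref{cor:epialggp}, the surjection $\cO(\bG)\to\cO(\bP)$ of Hopf algebras of regular functions is a monomorphism precisely when the closed embedding $\bP\le\bG$ is an epimorphism of linear algebraic groups, and \Cref{cor:epialggp} in turn identifies the latter with the requirement that the only $\bP$-invariants in $\cO(\bG)$ be the scalars. So everything comes down to verifying $\cO(\bG)^{\bP}=\Bbbk$.

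First I would unwind what this invariance means concretely. Dualizing the comodule structure featuring in \Cref{th:monohopf} \Cref{item:coinvh}, namely the one obtained by pushing the comultiplication $\Delta$ of $\cO(\bG)$ forward through $\pi$, the space $\cO(\bG)^{\bP}$ consists exactly of those regular functions $f$ on $\bG$ that are constant along the cosets $g\bP$, i.e.\ of the functions pulled back along the quotient map $\bG\twoheadrightarrow \bG/\bP$. In other words $\cO(\bG)^{\bP}\cong \cO(\bG/\bP)$, the ring of global regular functions on the homogeneous space.

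The geometric input is then the defining feature of parabolicity: $\bP\le\bG$ is parabolic precisely when $\bG/\bP$ is a complete (projective) variety. Since $\bG$ is connected its image $\bG/\bP$ under the (surjective) quotient map is connected, and a connected, complete, geometrically reduced variety over a field has only constant global sections of its structure sheaf; hence $\cO(\bG/\bP)=\Bbbk$ and we are done.

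The only real obstacle is the last, over-a-general-field point: one must ensure the ``global functions are constant'' assertion survives without assuming $\Bbbk$ algebraically closed. I would dispatch this by invoking \Cref{le:fieldext}: being monic is preserved and reflected under the extension $\Bbbk\le\ol{\Bbbk}$, so one may base-change along it and assume $\Bbbk$ algebraically closed. There $\bG_{\ol\Bbbk}$ remains connected (a connected finite-type group scheme with the rational point $e$ is geometrically connected), $\bG_{\ol\Bbbk}/\bP_{\ol\Bbbk}$ is a connected smooth projective variety, and the classical statement that such a variety has only constant regular functions applies verbatim, yielding $\cO(\bG)^{\bP}=\Bbbk$ as required.
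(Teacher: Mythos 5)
Your proposal is correct and takes essentially the same route as the paper: reduce via \Cref{cor:epialggp} (i.e.\ condition \Cref{item:coinvh} of \Cref{th:monohopf}) to showing $\cO(\bG)^{\bP}=\Bbbk$, identify these coinvariants with $\cO(\bG/\bP)$, and invoke the projectivity of $\bG/\bP$ (the defining property of parabolicity) together with the classical fact that a connected projective variety admits only constant regular functions. The one point where you go beyond the paper is the explicit reduction to an algebraically closed base field via \Cref{le:fieldext} -- a legitimate refinement, since the paper's citation of \cite[Theorem I.3.4]{Hart} tacitly assumes $\Bbbk=\ol{\Bbbk}$.
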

\begin{proof}
  By the very definition of parabolic subgroups \cite[\S 11.2]{brl}, the homogeneous space $\bG/\bP$ is projective. It then follows \cite[Theorem I.3.4]{Hart} that it admits no non-constant regular functions, meaning that condition \Cref{item:coinvh} of \Cref{th:monohopf} is met.
\end{proof}


\begin{remark}\label{re:charpok}
  
  Note that \Cref{cor:parab} goes through in positive characteristic; contrast this with the situation for Lie algebras: by \cite[Theorems 2.2 and 2.3]{bg_epi-lie}, in positive characteristic ($p$, say) epimorphisms of both finite-dimensional Lie algebras and finite-dimensional {\it $p$-Lie algebras} (the {\it restricted Lie algebras of characteristic $p$} of \cite[\S V.7, Definition 4]{jc}) are surjective.
\end{remark}

At the other extreme, so to speak, from epimorphically-embedded (linear algebraic) closed subgroups $\bH\le \bG$ lie the {\it observable} subgroups of \cite[\S 4]{bhm_ext}: those for which the $\bH$-invariants in $\cO(\bG)$ separate the points of the homogeneous space $\bG/\bH$ (as opposed to being constant, as in the epimorphic case).

The connection to epimorphisms was pursued in \cite[Proposition 1]{bb1}, with extensions in \cite[Theorem 5]{brion_epi}: every closed linear algebraic subgroup $\bH\le \bG$ admits a smallest intermediate observable subgroup $\bH\le \overline{\bH}\le \bG$, its {\it observable hull (or envelope)}, which is at the same time the largest subgroup in which $\bH$ embeds epimorphically.

The notion has been developed further in a number of directions: \cite[Theorem 5]{brion_epi} in fact applies to possibly non-affine group schemes, \cite[Theorem 1]{pet_epi} gives a geometric characterization of observability, \cite[Theorem 2.1]{gross_homog} and \cite[Theorem 9]{bt_rat} provide numerous other conditions equivalent to observability, etc. etc.

Different fields and contexts suggest different terminology (for observability), which we will adopt here: see for instance \cite[\S XI.1]{stens_quot}, or \cite{bg_epi-lie}, where the term `dominion' is employed extensively, or \cite[14J]{ahs} for the category-theoretic version we recall and adapt here.

\begin{definition}\label{def:dominion}
  Let $f:c\to d$ be a morphism in a category $\cC$. 
  \begin{enumerate}[(1)]
  \item\label{item:dom} Suppose $\cC$ is complete and well-powered.
    \begin{itemize}
    \item The {\it dominion of $f$} is the smallest regular subobject $\overline{c}\lhook\joinrel\xrightarrow{\quad} d$ (i.e. equalizer of a pair of morphisms $d\to \bullet$ \cite[Definition 7.56]{ahs}) through which $f$ factors.
      
    \item $f$ {\it is a dominion} if the map $c\to\overline{c}$ to the dominion is an isomorphism.

    \item $f$ {\it dominates} a subobject $c'\lhook\joinrel\xrightarrow{\quad} d$ if we have a factorization
      \begin{equation*}
        \begin{tikzpicture}[auto,baseline=(current  bounding  box.center)]
          \path[anchor=base] 
          (0,0) node (l) {$c$}
          +(2,.5) node (ul) {$c'$}
          +(4,.5) node (ur) {$\overline{c}$}
          +(6,0) node (r) {$d$}
          ;
          \draw[->] (l) to[bend left=6] node[pos=.5,auto] {$\scriptstyle $} (ul);
          \draw[right hook->] (ul) to[bend left=6] node[pos=.5,auto] {$\scriptstyle $} (ur);
          \draw[right hook->] (ur) to[bend left=6] node[pos=.5,auto] {$\scriptstyle $} (r);
          \draw[->] (l) to[bend right=6] node[pos=.5,auto,swap] {$\scriptstyle f$} (r);
        \end{tikzpicture}
      \end{equation*}

    \end{itemize}

    Dually:

  \item\label{item:codom} Suppose $\cC$ is cocomplete and co-well-powered.
    \begin{itemize}
    \item The {\it codominion of $f$} is the smallest regular quotient object $c\xrightarrowdbl{\quad} \overline{d}$ (i.e. coequalizer of a pair of arrows $\bullet\to c$ \cite[Definition 7.71]{ahs}) through which $f$ factors.

    \item $f$ (or $d$) {\it is a codominion} if the map $\overline{d}\to d$ is an isomorphism.

    \item $f$ {\it dominates} (this seems a better choice than `codominates') a quotient object $c\xrightarrowdbl{\quad} d'$ if we have a factorization
      \begin{equation*}
        \begin{tikzpicture}[auto,baseline=(current  bounding  box.center)]
          \path[anchor=base] 
          (0,0) node (l) {$c$}
          +(2,.5) node (ul) {$\overline{d}$}
          +(4,.5) node (ur) {$d'$}
          +(6,0) node (r) {$d$}
          ;
          \draw[->>] (l) to[bend left=6] node[pos=.5,auto] {$\scriptstyle $} (ul);
          \draw[->>] (ul) to[bend left=6] node[pos=.5,auto] {$\scriptstyle $} (ur);
          \draw[->] (ur) to[bend left=6] node[pos=.5,auto] {$\scriptstyle $} (r);
          \draw[->] (l) to[bend right=6] node[pos=.5,auto,swap] {$\scriptstyle f$} (r);
        \end{tikzpicture}
      \end{equation*}
    \end{itemize}
  \end{enumerate}
  Under the hypotheses on $\cC$, it can be shown (\cite[\S 14.J]{ahs} and its categorical dual) that such arrows do exist and are unique up to the obvious notions of isomorphism.
\end{definition}


With this in place, note that the observable envelope \cite[\S 3]{bb1} of a closed linear algebraic subgroup $\bH\le \bG$ is nothing but the dominion of $\bH\to \bG$ in the appropriate category (linear algebraic groups in \cite{bb1}, arbitrary algebraic groups in \cite[Theorem 5]{brion_epi}, etc.). 

As to Hopf algebras $H$, there is a satisfying correspondence between dominions (in the category $\Alg$) of right coideal subalgebras $A\le H$ and codominions (in the category $\Coalg$) of left module quotient coalgebras $H\twoheadrightarrow C$. We review some of this material, as worked out partially in \cite[\S 1.1]{chi_cos-xv1} (the discussion is not present in later iterations of that preprint or in the published version \cite{chi_cos}).

Throughout, let $H$ be a Hopf algebra over some field $\Bbbk$. The following constructions are central to \cite{tak_rel} and discussed there extensively:

\begin{notation}\label{not:lr}
  \begin{enumerate}[(1)]
  \item For a right $H$-coideal subalgebra $\iota:A\le H$ (i.e. both a subalgebra and a {\it right coideal} \cite[discussion preceding Example 1.6.6]{montg_hopf}), write $r(\iota)$ or slightly abusively $r(A)$ (`$r$' for `right') for the quotient
    \begin{equation*}
      r(\iota):=H/HA^+,\ A^+ = \ker\varepsilon|_{A},\ \varepsilon\text{ being the counit of }H.
    \end{equation*}
    It is not difficult to see that $r(\iota)$ is a left $H$-module quotient coalgebra of $H$ \cite[Proposition 1]{tak_rel}.

  \item There is a kind of reciprocal construction: given a left $H$-module quotient coalgebra $\pi:H\to C$, write $l(\pi)$ or $l(C)$ (`$l$' meaning `left') for
    \begin{equation*}
      l(\pi):={}^CH:= \{h\in H\ |\ \pi(h_1)\otimes h_2 = \pi(1)\otimes h\}. 
    \end{equation*}
    The same \cite[Proposition 1]{tak_rel} shows that $l(\pi)$ is a right coideal subalgebra. 
  \end{enumerate}
  We might occasionally also drop parentheses in applying the operators $l$ and $r$ (e.g. $rA$ in place of $r(A)$).   
\end{notation}
The operators $l$ and $r$ give back-and-forth maps between
\begin{equation}\label{eq:hposets}
  \begin{aligned}
    &H_{\le}:=\{\text{right $H$-coideal subalgebras}\le H\}
      \quad\text{and}\quad\\
    &H_{\twoheadrightarrow}:=\{\text{left $H$-module quotient coalgebras of }H\}.
  \end{aligned}
\end{equation}
We regard these as posets in the obvious fashion, ordering them 
\begin{itemize}
\item by inclusion for subalgebras;
\item and reverse kernel inclusion for quotients.
\end{itemize}

As observed in \cite[\S 1.1]{chi_cos-xv1}, we have

\begin{lemma}\label{le:lrops}
  Let $H$ be a Hopf algebra with its associated posets \Cref{eq:hposets}.
  \begin{enumerate}[(1)]
  \item\label{item:6} The maps $l:H_{\twoheadrightarrow}\to H_{\le}$ and $r:H_{\le}\to H_{\twoheadrightarrow}$ are order-reversing, and form a Galois connection between the two posets in the sense of \cite[\S IV.5, Theorem 1]{mcl}:
    \begin{equation*}
      rA\ge C \iff lC\ge A. 
    \end{equation*}
    
  \item\label{item:7} The compositions
    \begin{equation*}
      lr:H_{\le} \to H_{\le}
      \quad\text{and}\quad
      rl:H_{\twoheadrightarrow}\to H_{\twoheadrightarrow}
    \end{equation*}
    are closure operators on the two respective posets in the sense of \cite[\S VI.1]{mcl} or \cite[Definition 0-3.8]{ghklms_latt}: order-preserving, idempotent, and such that the image of an element dominates that element. 
  \end{enumerate}
\end{lemma}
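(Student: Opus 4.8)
The plan is to reduce the entire statement to a single equivalence of containments underlying the claimed Galois connection, after which both the order-reversal in \Cref{item:6} and the closure-operator assertion in \Cref{item:7} follow by pure lattice theory. Granting, as recorded above from \cite[Proposition 1]{tak_rel}, that $r$ and $l$ land in the stated posets, I would first unwind the two orderings. Writing $\pi_C:H\to C$ for the structure map of a quotient $C\in H_{\twoheadrightarrow}$, reverse-kernel-inclusion gives
\[ rA\ge C \iff \ker(H\to rA)\subseteq \ker\pi_C \iff HA^+\subseteq \ker\pi_C, \]
since $\ker(H\to rA)=HA^+$ by the construction $r(A)=H/HA^+$; and inclusion of coideal subalgebras gives $lC\ge A \iff A\subseteq {}^CH$. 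Thus the asserted Galois connection $rA\ge C\iff lC\ge A$ is precisely the equivalence
\[ HA^+\subseteq \ker\pi_C \iff A\subseteq {}^CH. \]

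For the implication $A\subseteq{}^CH \Rightarrow HA^+\subseteq\ker\pi_C$, the easy direction, I would take $a\in A^+$ and exploit that $a\in{}^CH$ means $\pi_C(a_1)\otimes a_2=\pi_C(1)\otimes a$; applying $\id\otimes\varepsilon$ collapses this to $\pi_C(a)=\varepsilon(a)\pi_C(1)=0$. Hence $\pi_C$ annihilates $A^+$, and since $\ker\pi_C$ is a left ideal ($C$ being a left $H$-module quotient), it contains all of $HA^+$.

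The hard part will be the reverse implication $HA^+\subseteq\ker\pi_C\Rightarrow A\subseteq{}^CH$, where I expect the main obstacle to lie. Here I would lean on the fact that $A$ is a right coideal, so $\Delta(a)\in A\otimes H$ for $a\in A$; choosing a genuine representative $\Delta(a)=\sum_i a^{(i)}\otimes h^{(i)}$ with $a^{(i)}\in A$ (to avoid over-reading Sweedler notation), I would split each $a^{(i)}=\bigl(a^{(i)}-\varepsilon(a^{(i)})1\bigr)+\varepsilon(a^{(i)})1$. The correction terms $a^{(i)}-\varepsilon(a^{(i)})1$ lie in $A^+\subseteq HA^+\subseteq\ker\pi_C$, hence vanish under $\pi_C$, while the terms $\varepsilon(a^{(i)})1$ reassemble, by counitality, into $\pi_C(1)\otimes a$. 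This yields $\pi_C(a_1)\otimes a_2=\pi_C(1)\otimes a$, i.e. $a\in{}^CH$, as desired.

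Finally, with the Galois connection in hand, \Cref{item:6} and \Cref{item:7} become formal. The antitone connection $C\le rA\iff A\le lC$ forces $l$ and $r$ to be order-reversing and, by the standard theory (e.g. \cite[\S IV.5, Theorem 1]{mcl} together with \cite[\S VI.1]{mcl} or \cite[Definition 0-3.8]{ghklms_latt}), makes the composites $lr$ and $rl$ closure operators: monotone, inflationary (since $A\le lrA$ results from applying the equivalence to $rA\le rA$, and symmetrically $C\le rlC$), and idempotent (from $rlr=r$ and $lrl=l$). No further computation with $H$ is needed for this last step.
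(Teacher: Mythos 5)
Your proof is correct and takes essentially the same route as the paper, whose entire proof is the one-liner that part (1) is ``immediate from the definitions'' and part (2) ``a formal consequence'' of it; your unwinding of the two orderings into the single equivalence $HA^+\subseteq\ker\pi_C \iff A\subseteq {}^CH$ (with the splitting $a^{(i)}=\bigl(a^{(i)}-\varepsilon(a^{(i)})1\bigr)+\varepsilon(a^{(i)})1$ handling the nontrivial direction, and unitality of $A$ guaranteeing these correction terms lie in $A^+$) is precisely the verification the paper leaves implicit. The subsequent formal derivation of order-reversal and of the closure-operator properties of $lr$ and $rl$ from the Galois connection is the standard lattice-theoretic argument the paper is invoking, so nothing is missing.
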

\begin{proof}
  \Cref{item:6} is immediate from the definitions, whereas \Cref{item:7} is a formal consequence of \Cref{item:6}.
\end{proof}

The relevance of \Cref{le:lrops} to the present discussion lies in relation to (co)dominions: these too provide closure operators on the two posets, and the two types of closure coincide (see \cite[Proposition 1.1.6]{chi_cos-xv1}).

\begin{proposition}\label{pr:sameclosure}
  Let $H$ be a Hopf algebra and $H_{\le}$, $H_{\twoheadrightarrow}$ its posets \Cref{eq:hposets}.
  \begin{enumerate}[(1)]
  \item For every right $H$-coideal subalgebra $A\in H_{\le}$, $lrA$ is precisely the dominion of $A\le H$ in the category of algebras. 
  \item For every left $H$-module quotient coalgebra $C\in H_{\twoheadrightarrow}$, $rlC$ is the codominion of $H\twoheadrightarrow C$ in the category of coalgebras.  \qedhere
  \end{enumerate}
\end{proposition}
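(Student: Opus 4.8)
The plan is to reduce each of the two statements to the concrete description of the relevant self-pushout (resp.\ self-pullback) supplied by \Cref{def:dominion}, and then to match that description against the coinvariant operators $l,r$ of \Cref{not:lr} by means of two explicit linear comparison maps, one for each inclusion; the substantive one is built from the antipode $S$. Throughout write $\pi:H\to C$ for the relevant quotient, and recall (as in the proof of \Cref{le:fieldext}, via \cite[\S X.1, Proposition 1.2]{stens_quot} and \cite[Theorem 3.5]{nt_torsion}) that the self-pushout of an algebra inclusion $A\le H$ is $H\otimes_A H$ while the self-pullback of a coalgebra surjection $\pi$ is governed by the cotensor product $H\square_C H$.

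For part (1), \Cref{def:dominion} identifies the dominion of $A\le H$ in $\Alg$ with the equalizer $\{h\in H\mid h\otimes_A 1=1\otimes_A h\in H\otimes_A H\}$, and I must show this equals $lrA={}^{C}H$ for $C:=rA=H/HA^{+}$. For "$\subseteq$" I would introduce
\[
\Psi:H\otimes_A H\longrightarrow C\otimes H,\qquad g\otimes_A h\longmapsto \pi(g_1)\otimes g_2 h,
\]
which is well defined precisely because $A$ is a right coideal subalgebra (so $\Delta(A)\subseteq A\otimes H$ and $\pi(xa)=\varepsilon(a)\pi(x)$ for $a\in A$); since $\Psi(h\otimes_A 1)=\pi(h_1)\otimes h_2$ and $\Psi(1\otimes_A h)=\pi(1)\otimes h$, membership in the equalizer forces $h\in{}^{C}H$. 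The reverse inclusion is the crux and is where the antipode enters: for $b\in{}^{C}H$ the coinvariance condition reads $b_1\otimes b_2-1\otimes b\in HA^{+}\otimes H$, and I would feed this into the antipode-twisted map
\[
\Lambda:H\otimes H\longrightarrow H\otimes_A H,\qquad x\otimes y\longmapsto x_1\otimes_A S(x_2)y.
\]
A short Sweedler computation using $\Delta(A)\subseteq A\otimes H$ and $\sum a_1S(a_2)=\varepsilon(a)1$ shows $\Lambda$ annihilates $HA^{+}\otimes H$, whereas $\Lambda(b_1\otimes b_2)=b\otimes_A 1$ and $\Lambda(1\otimes b)=1\otimes_A b$; hence $b\otimes_A 1=1\otimes_A b$ and $b$ lies in the dominion.

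For part (2) I would run the order-reversing analogue. By \Cref{def:dominion} the codominion of $\pi$ is the coequalizer of the two projections $p_1,p_2:H\square_C H\rightrightarrows H$ out of the self-pullback; since the forgetful functor to $\cat{Vec}$ preserves colimits, this is $H/I$ with $I=\operatorname{im}(p_1-p_2)$ (automatically a coideal). Setting $B:=lC={}^{C}H$, the target is $I=HB^{+}$, so that $H/I=rlC$. On one side, for $b\in{}^{C}H$ the coinvariance $\pi(b_1)\otimes b_2=\pi(1)\otimes b$ says exactly that $1\otimes b\in H\square_C H$, and $(p_1-p_2)(1\otimes b)=\varepsilon(b)1-b$, so these elements together with their $H$-translates land $HB^{+}$ inside $I$. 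The opposite inclusion $I\subseteq HB^{+}$ is the crux and is the precise dual of the role of $\Lambda$: it amounts to checking that the projection $H\twoheadrightarrow H/HB^{+}=rlC$ already coequalizes $p_1$ and $p_2$, which I would verify by an antipode-twisted contraction of the defining relation of $H\square_C H$.

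The hardest part in both halves is the reverse inclusion --- that the closure $lrA$ (resp.\ $rlC$) is already dominated by $A$ (resp.\ is realized as the codominion) --- and it is exactly there that the antipode is indispensable. I would stress that no faithful flatness or coflatness of $H$ over $A$ or $C$ is assumed: such a hypothesis would instead force $\Psi$ and its dual to be isomorphisms and collapse the two closures for a quite different reason, whereas the point here is that the identification with the Takeuchi operators of \Cref{not:lr} (and hence with the closures of \Cref{le:lrops}) holds unconditionally. A secondary technical caution, specific to part (2), is that limits in $\Coalg$ are not created by the forgetful functor to $\cat{Vec}$, so the identification of the self-pullback with the cotensor product $H\square_C H$ (the same move already made in \Cref{pr:commnodiff} via \cite[Theorem 3.5]{nt_torsion}) must be justified before the comodule computation can begin.
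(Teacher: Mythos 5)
Your part (1) is complete and correct, and it is worth noting that it is more self-contained than what the paper offers: the paper does not prove \Cref{pr:sameclosure} in-text at all, instead citing \cite[Proposition 1.1.6]{chi_cos-xv1} and developing \Cref{th:coalg-dom} as the machinery that proof rests on. Your two comparison maps check out: $\Psi$ is $A$-balanced because $\Delta(A)\subseteq A\otimes H$ gives $\pi(g_1a_1)\otimes g_2a_2h=\pi(g_1)\otimes g_2ah$, and $\Lambda$ kills $HA^+\otimes H$ because $a_1\in A$ can be slid across $\otimes_A$, leaving $a_1S(a_2)=\varepsilon(a)1$. Your use of the equalizer description of the $\Alg$-dominion inside $H\otimes_AH$ (rather than inside the actual $\Alg$-pushout, which is an amalgamated free product) is exactly the paper's own usage via \cite[\S XI.1, Proposition 1.1]{stens_quot}, and your reduction of the codominion to the $\vc$-coequalizer of the cotensor pair is precisely the equivalence \Cref{item:prod} $\iff$ \Cref{item:cotens} of \Cref{th:coalg-dom}, which you could simply cite rather than re-justify.

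Part (2), however, has a genuine gap exactly where you locate the crux: the inclusion $I\subseteq HB^+$ is only announced (``which I would verify by an antipode-twisted contraction'') and never carried out, and that inclusion is the entire content of the dual statement --- without it you have only shown that the codominion is a quotient of $rlC$, not that it equals it. Moreover the computation is not a purely formal dual of your $\Lambda$: where (1) used $\Delta(A)\subseteq A\otimes H$, here one must invoke the left $H$-linearity of $\pi$. Concretely: for $\sum g\otimes h\in H\square_CH$, the defining relation $\sum g_1\otimes\pi(g_2)\otimes h=\sum g\otimes\pi(h_1)\otimes h_2$ together with $\pi(xy)=x\cdot\pi(y)$ shows $\sum S(g)h\in{}^CH=B$; applying $\Delta\otimes\id_C\otimes\id$ to the relation first shows $\sum g_1\otimes S(g_2)h\in H\otimes B$, and then
\begin{equation*}
  \sum\bigl(\varepsilon(g)h-g\varepsilon(h)\bigr)=\sum g_1\bigl(S(g_2)h-\varepsilon(g_2)\varepsilon(h)1\bigr)\in HB^+,
\end{equation*}
the second factor lying in $B^+$ because it is killed by $\varepsilon$ and $B=B^+\oplus\Bbbk1$. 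A second, smaller flaw sits in your ``easy'' inclusion: the naive $H$-translates $a\otimes b$ of $1\otimes b$ do \emph{not} lie in $H\square_CH$ for general $a\in H$ (the cotensor product is not a left $H$-submodule of $H\otimes H$ under multiplication in the first leg); the correct element is the diagonal translate $a_1\otimes a_2b$, which does lie in $H\square_CH$ by $H$-linearity of $\pi$ and coinvariance of $b$, and yields $(p_1-p_2)(a_1\otimes a_2b)=\varepsilon(b)a-ab$, whence $HB^+\subseteq I$. With these two repairs your direct, element-level argument closes and constitutes a legitimate alternative to the paper's outsourced proof.
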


The proof of \cite[Proposition 1.1.6]{chi_cos-xv1} relies on having available a number of equivalent characterizations of dominions (and domination) for comodule morphisms, analogous to the algebra-morphism counterparts in \cite[\S XI.1, Proposition 1.1]{stens_quot}. Those characterizations are left unstated in \cite{chi_cos-xv1}, so we revisit the matter here for completeness.

\begin{theorem}\label{th:coalg-dom}
  Let $f:C\to D$ and $\pi:C\twoheadrightarrow D'$ be two morphisms in $\Coalg_{\Bbbk}$, with $\pi$ onto.

  The following conditions are equivalent:
  \begin{enumerate}[(a)]

  \item\label{item:dom} $f$ dominates $\pi$ in the sense of \Cref{def:dominion} \Cref{item:codom}.

  \item\label{item:mor} For any pair $f_i:C'\to C$, $i=1,2$ of coalgebra morphisms with $f f_1=ff_2$ we have $\pi f_1=\pi f_2$.

  \item\label{item:prod} The two compositions
    \begin{equation}\label{eq:cxc}
      \begin{tikzpicture}[auto,baseline=(current  bounding  box.center)]
        \path[anchor=base] 
        (0,0) node (l) {$C\times_D C$}
        +(4,0) node (m) {$C$}
        +(6,0) node (r) {$D'$}
        ;
        \draw[->] (l) to[bend left=6] node[pos=.5,auto] {$\scriptstyle \id_C\times \varepsilon$} (m);
        \draw[->] (l) to[bend right=6] node[pos=.5,auto,swap] {$\scriptstyle \varepsilon\times\id_C$} (m);
        \draw[->] (m) to[bend left=0] node[pos=.5,auto] {$\scriptstyle \pi$} (r);
      \end{tikzpicture}
    \end{equation}
    are equal, where the leftmost term is the pullback in the category of $\Bbbk$-coalgebras and the two parallel left-hand morphisms are the canonical ones.

  \item\label{item:cotens} The two compositions
    \begin{equation}\label{eq:csqc}
      \begin{tikzpicture}[auto,baseline=(current  bounding  box.center)]
        \path[anchor=base] 
        (0,0) node (l) {$C\square_D C$}
        +(4,0) node (m) {$C$}
        +(6,0) node (r) {$D'$}
        ;
        \draw[->] (l) to[bend left=6] node[pos=.5,auto] {$\scriptstyle \id_C\otimes \varepsilon$} (m);
        \draw[->] (l) to[bend right=6] node[pos=.5,auto,swap] {$\scriptstyle \varepsilon\otimes\id_C$} (m);
        \draw[->] (m) to[bend left=0] node[pos=.5,auto] {$\scriptstyle \pi$} (r);
      \end{tikzpicture}
    \end{equation}
    are equal, where the leftmost term is the cotensor product \cite[Definition 8.4.2]{montg_hopf}.

  \item\label{item:x} For any $C$-bicomodule $X$ and element $x\in X$, we have
    \begin{equation}\label{eq:x01f}
      x_0\otimes f(x_1) = x_0\otimes f(x_{-1})\in X\otimes D
      \Longrightarrow
      x_0\otimes \pi(x_1) = x_0\otimes \pi(x_{-1})\in X\otimes D'.
    \end{equation}

  \item\label{item:x-fin} The same condition, but only for {\it finite-dimensional} bicomodules $X$.

  \item\label{item:vtow} Given a linear map $\varphi:V\to W$ between (right, say) $C$-comodules, if $\varphi$ is a morphism of $D$-comodules via corestriction along $f$ then it is a morphism of $D'$-comodules via corestriction along $\pi$:
    \begin{equation*}
      \begin{aligned}
        (\id_W\otimes f)\circ\rho_W\circ \varphi &= (\varphi\otimes f)\circ\rho_V
                                                   \text{ implies }\\
        (\id_W\otimes \pi)\circ\rho_W\circ \varphi &= (\varphi\otimes \pi)\circ\rho_V,
      \end{aligned}            
    \end{equation*}
    where $\rho_V:V\to V\otimes C$ and $\rho_W:W\to W\otimes C$ are the comodule structure maps.

  \item\label{item:vtow-fin} As above, but only for {\it finite-dimensional} $C$-comodules $V$ and $W$.

  \end{enumerate}
\end{theorem}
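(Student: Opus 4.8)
The plan is to transcribe into dual (coalgebraic) language Stenström's classical characterisations of the dominion of an algebra inclusion \cite[\S XI.1, Proposition 1.1]{stens_quot}, pivoting on the fact that the codominion of $f$ is the coequalizer of its kernel pair — the dual of the description (recalled in the Introduction) of a dominion as the equalizer of a self-pushout. Unwinding \Cref{def:dominion}, ``$f$ dominates $\pi$'' means $\pi$ factors through the codominion of $f$, which is the coequalizer of the kernel pair $p_1,p_2\colon C\times_D C\to C$. Since products in $\Coalg$ are tensor products, the self-pullback $C\times_D C$ is a subcoalgebra of $C\otimes C$ whose projections are exactly the partial counits $\id_C\otimes\varepsilon$ and $\varepsilon\otimes\id_C$ of \Cref{item:prod}; so $f$ dominates $\pi$ precisely when $\pi$ coequalizes this kernel pair, giving (a) $\Leftrightarrow$ \Cref{item:prod}. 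Condition \Cref{item:mor} is the reformulation quantifying over all pairs $f_1,f_2$ equalized by $f$: such a pair factors through $C\times_D C$ by the universal property of the pullback, so $\pi f_1=\pi f_2$ follows from \Cref{item:prod}, while \Cref{item:prod} is the instance in which $f_1,f_2$ are the two projections. Thus (a) $\Leftrightarrow$ \Cref{item:mor} $\Leftrightarrow$ \Cref{item:prod}.

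Next I would pass from the pullback to the cotensor product. The coalgebra self-pullback is carried by $C\square_D C$ (this is the identification underlying the monomorphism criterion $\Delta\colon C\xrightarrow{\cong} C\square_D C$ of \cite[Theorem 3.5]{nt_torsion}), and the partial counits restrict compatibly, giving \Cref{item:prod} $\Leftrightarrow$ \Cref{item:cotens}. To reach \Cref{item:x}, view a $C$-bicomodule $X$ through the two maps $X\to X\otimes D$ obtained by applying $f$ to the right and to the left leg of the coaction: the hypothesis of \Cref{item:x} says $x$ is equalized by these two maps, and the conclusion that it is equalized by their $\pi$-analogues. The cotensor product $C\square_D C$ is precisely the universal such bicomodule, its defining relation being this equalizing condition, so testing \Cref{item:x} against $X=C\otimes C$ and pushing the $X\otimes D'$-valued conclusion down along the counits recovers \Cref{item:cotens}; conversely any bicomodule element satisfying the hypothesis maps into $C\square_D C$, yielding \Cref{item:cotens} $\Leftrightarrow$ \Cref{item:x}. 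Finally \Cref{item:x} $\Leftrightarrow$ \Cref{item:x-fin} by the fundamental theorem of comodules: every $x$ lies in a finite-dimensional subbicomodule, so the general statement reduces to the finite-dimensional one, verbatim the reduction used in \Cref{th:monohopf}.

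The last cluster is the element–morphism dictionary. For finite-dimensional right $C$-comodules $V,W$ the space $\Hom(V,W)=W\otimes V^{*}$ is a $(C,C)$-bicomodule, right via $W$ and left via $V^{*}$, and for $\varphi\in\Hom(V,W)$ the hypothesis of \Cref{item:x-fin} reads exactly as ``$\varphi$ is $D$-colinear along $f$'' and the conclusion as ``$\varphi$ is $D'$-colinear along $\pi$''. Since every finite-dimensional bicomodule is assembled from such test objects $W\otimes V^{*}$, this gives \Cref{item:x-fin} $\Leftrightarrow$ \Cref{item:vtow-fin}, and \Cref{item:vtow} $\Leftrightarrow$ \Cref{item:vtow-fin} is once more the finite-dimensional reduction — the same $\cM^{C}(V,W)\cong(W\otimes V^{*})^{C}$ bookkeeping already used for \Cref{th:monohopf}.

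I expect the genuine obstacle to lie in the middle cluster: pinning down that the categorical self-pullback is carried by the cotensor product and that $C\square_D C$ is the universal carrier of the equalizing relation of \Cref{item:x}, and then matching the two Sweedler ``legs'' of that relation with the two parallel maps of \Cref{item:cotens}. The indexing is delicate because the conclusion of \Cref{item:x} is $X\otimes D'$-valued whereas that of \Cref{item:cotens} is $D'$-valued, so one must check that collapsing by the counits loses no information in the direction \Cref{item:cotens} $\Rightarrow$ \Cref{item:x}. By contrast, the two finite-dimensional reductions and the $\Hom$/tensor dictionary are routine, being faithful copies of arguments already deployed for \Cref{th:monohopf}.
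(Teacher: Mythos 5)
Your finite-dimensional reductions and the cluster (a)$\iff$(b)$\iff$(c) (codominion $=$ coequalizer of the kernel pair, plus the universal property of the pullback) do match the paper, but there is a genuine gap at exactly the point you flag as the crux. Your passage (c)$\iff$(d) rests on the claims that ``products in $\Coalg$ are tensor products'' and that ``the coalgebra self-pullback is carried by $C\square_D C$''; both are false outside the cocommutative setting. The tensor-product description of products is \cite[Theorem 6.4.5]{swe} for $\Coalg_{cc}$ only (and the paper invokes it only there, for $\Coalg_{cc}$-pullbacks, in the proof of its Proposition 2.1). In general $C\square_D C$ is not even a subcoalgebra of $C\otimes C$, the two partial counits out of it are merely linear maps, and the natural comparison $c'\mapsto f_1(c'_1)\otimes f_2(c'_2)$ fails comultiplicativity without cocommutativity, so the cotensor product cannot serve as the categorical pullback; \cite[Theorem 3.5]{nt_torsion} asserts no such identification, and under your reading (d) would be a tautological restatement of (c) rather than something to prove. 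The paper's actual argument for (c)$\iff$(d) uses only the canonical \emph{linear} map $C\times_D C\to C\square_D C$ intertwining the two parallel pairs (which gives one implication immediately), together with the observation that the $\vc$-coequalizer of the pair in (d) is already a quotient \emph{coalgebra} of $C$ through which $f$ factors, whence both pairs recover the same codominion.

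Your bridge (d)$\iff$(e) via ``$C\square_D C$ is the universal such bicomodule'' is also flawed: the cotensor relation is an identity in $C\otimes D\otimes C$ with $f$ applied to the \emph{inner} Sweedler legs, whereas the hypothesis of (e) is an identity in $X\otimes D$ with $f$ on the \emph{outer} legs. For $X=C\otimes C$ with its outer bicomodule structure, elements of $C\square_D C$ need not satisfy the hypothesis of (e) (already for $f=\id_C$ and $C$ a matrix coalgebra the two conditions diverge), nor does a general pair $(X,x)$ as in (e) admit a canonical bicomodule map into $C\square_D C$. The hard implications are carried in the paper by two Stenstr\"om-style constructions your proposal never makes: for (b)$\Rightarrow$(f) one builds the auxiliary coalgebra $C':=C\oplus X^*$, with $\Delta x^*=x^*_0\otimes x^*_1+x^*_{-1}\otimes x^*_0$ and $\varepsilon|_{X^*}=0$, and two explicit coalgebra morphisms $f_1,f_2:C'\to C$ equalized by $f$ that encode the chosen $x\in X$; and for (g)$\Rightarrow$(b) one endows the common domain of a pair $f_1,f_2$ with comodule structures making $f_1$ a colinear map $C'_{ff_2}\to C_{f}$. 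Relatedly, your direction (h)$\Rightarrow$(f) rests on the unproved assertion that every finite-dimensional bicomodule is ``assembled from'' objects $W\otimes V^*$ (one would need, say, a bicomodule embedding $X\hookrightarrow W\otimes V^*$); the paper sidesteps this entirely by proving only (f)$\Rightarrow$(h) and closing the cycle (b)$\Rightarrow$(f)$\Rightarrow$(h)$\iff$(g)$\Rightarrow$(b).
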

\begin{proof}
  \begin{enumerate}[]
  \item {\bf \Cref{item:x} $\iff$ \Cref{item:x-fin} and \Cref{item:vtow} $\iff$ \Cref{item:vtow-fin}} This follows immediately from the fact that comodules are unions of finite-dimensional subcomodules \cite[Theorem 5.1.1]{mon}. This holds for bicomodules too of course: a $C$-bicomodule is nothing but a comodule over $C\otimes C^{cop}$,where $C^{cop}$ is the coalgebra with ``co-opposite'' comultiplication
    \begin{equation*}
      C\ni c\xmapsto{\quad} c_2\otimes c_1\in C\otimes C.
    \end{equation*}
    We will thus only have to consider finite-dimensional (bi)comodules.

  \item {\bf \Cref{item:dom} $\iff$ \Cref{item:prod}} The codominion of $f$ is the coequalizer of the two parallel left-hand maps in \Cref{eq:cxc} \cite[14J (g)]{ahs}, hence the equivalence.


  \item {\bf \Cref{item:mor} $\iff$ \Cref{item:prod}}. On the one hand, the former is formally stronger because the two parallel arrows in \Cref{eq:cxc} can serve the role of $f_i$ in \Cref{item:mor}. On the other, $f_i$ as in \Cref{item:mor} will have to factor uniquely through \Cref{item:prod} by the universality property of the pullback. This gives the opposite implication $\Cref{item:prod}\Rightarrow \Cref{item:mor}$ and hence the equivalence.

  \item {\bf \Cref{item:prod} $\iff$ \Cref{item:cotens}} For any pair of morphisms $f_i:C'\to C$, $i=1,2$ with $f f_1=f f_2$ we have a map
    \begin{equation*}
      C'\ni c\xmapsto{\quad}f_1(c_1)\otimes f_2(c_2)\in C\square_D C\le C\otimes C.
    \end{equation*}
    In particular, there is a canonical map $C\times_D C\to C\square_D C$ which together with \Cref{eq:csqc} factors \Cref{eq:cxc}. Note next, as a simple exercise, that the coequalizer (in $\cat{Vec}$!) of the two left-hand parallel arrows in \Cref{eq:csqc} is already a quotient {\it coalgebra} of $C$.

    It follows that the codominion of $f$ can also be recovered as the coequalizer in $\cat{Vec}$ of the two left-hand maps in \Cref{eq:csqc}, so the conclusion follows.

  \item {\bf \Cref{item:x-fin} $\Longrightarrow$ \Cref{item:vtow-fin}} Given $V,W\in \cM^C_f$ (hence finite-dimensional), apply \Cref{item:x-fin} to the finite-dimensional $C$-bicomodule
    \begin{equation*}
      X:=\Hom(V,W)\cong W\otimes V^*
    \end{equation*}
    with its right comodule structure inherited from $W$ and the left structure inherited from that on $V^*$ (which in turn results from dualizing the original {\it right} comodule structure on $V$).

  \item {\bf \Cref{item:vtow} $\Longrightarrow$ \Cref{item:mor}} This will be entirely analogous to the proof of \cite[\S XI.1, Proposition 1.1, implication (d) $\Rightarrow$ (a)]{stens_quot}.

    Morphisms $f_i:C'\to C$ as in \Cref{item:mor} will induce right $C$-comodule structures on $C'$ so that $f_1$ becomes a $D$-comodule morphism
    \begin{itemize}
    \item from $C'_{ff_2}$, meaning $C'$ equipped with the $D$-comodule structure induced by $ff_2$ (which is equal to $ff_1$ by hypothesis);
    \item to $C_f$, meaning $C$ equipped with the $D$-comodule structure induced by $f$.
    \end{itemize}
    By assumption, the same goes with $\pi$ in place of $f$:
    \begin{equation*}
      \cM^{D'}\ni C'_{\pi f_2}\xrightarrow{\quad f_1\quad}C_{\pi}\in \cM^{D'}.
    \end{equation*}
    This, though, means precisely that $\pi f_1=\pi f_2$. 
    
  \item {\bf \Cref{item:mor} $\Longrightarrow$ \Cref{item:x-fin}} The argument is very similar to the proof of \cite[\S XI.1, Proposition 1.1, implication (a) $\Rightarrow$ (b)]{stens_quot}.
    
    Consider a finite-dimensional $C$-bicomodule $X$ with right and left comodule structures denoted as usual, by
    \begin{equation*}
      \begin{aligned}
        X\ni x&\xmapsto{\quad}x_0\otimes x_1\in X\otimes C\quad\text{and}\\
        X\ni x&\xmapsto{\quad}x_{-1}\otimes x_0\in C\otimes X\text{ respectively}.
      \end{aligned}
    \end{equation*}
    This equips the dual vector space $X^*$ with a $C$-bicomodule structure in the only sensible fashion: denoting the generic element of $X^*$ by $x^*$, we have
    \begin{equation*}
      x^*_0(x)x^*_1 = x^*(x_0)x_{-1}\in C
      \quad\text{and}\quad
      x^*_0(x)x^*_{-1} = x^*(x_0)x_{1}\in C
    \end{equation*}
    for all $x^*\in X$ and $x\in X$. One can then equip the space $C':=C\oplus X^*$ with a coalgebra structure by keeping the comultiplication and counit we already have on $C$, and setting
    \begin{equation*}
      X^*\ni x^*\xmapsto{\quad\Delta\quad} x^*_0\otimes x^*_1+x^*_{-1}\otimes x^*_0
      \quad\in\quad X^*\otimes C\oplus C\otimes X^*\le C'\otimes C'
    \end{equation*}
    and $\varepsilon|_{X^*}\equiv 0$. 

    Now fix an $x\in X$ satisfying the hypothesis of \Cref{item:x} (i.e. the left-hand side of \Cref{eq:x01f}), and consider the two coalgebra morphisms $f_i:C'\to C$, $i=1,2$ defined by
    \begin{itemize}
    \item $f_1:=$ obvious projection of $C'=C\oplus X^*$ onto $C$;
    \item $f_2|_{C}:=\id_C$;
    \item and
      \begin{equation*}
        X^*\ni x^*\xmapsto{\quad f_2\quad}x^*(x_0)x_1-x^*(x_0)x_{-1} = x^*_0(x)x^*_{-1}-x^*_0(x)x^*_{1}\in C. 
      \end{equation*}
    \end{itemize}
    We have $ff_1=ff_2$ and hence also $\pi f_1=\pi f_2$ by assumption, whence right-hand side of \Cref{eq:x01f}.
  \end{enumerate}  
  This completes the proof. 
\end{proof}

The discussion above focuses on dominions of (co)algebra morphisms, whereas we will later be interested in the richer structures mentioned before (bialgebras, Hopf algebras). It is good to know, then, that a result analogous to \Cref{pr:commnodiff} holds: it doesn't make much difference where one computes the (co)dominion, so long as it makes sense to do so.

\begin{theorem}\label{th:same-co-dom}
  \begin{enumerate}[(1)]
  \item\label{item:samedom} The dominion of an algebra morphism $f:H\to H'$ is the same in any of the categories
    \begin{equation*}
      \Alg_{\bullet},\ \Bialg_{\bullet},\ \HAlg_{\bullet},\ {}_{bi}\HAlg,\ {}_{2}\HAlg
    \end{equation*}
    that happen to contain it.
  \item\label{item:samecodom} The codominion of a coalgebra morphism $f:H\to H'$ is the same in any of the categories
    \begin{equation*}
      \Coalg_{\bullet},\ \Bialg_{\bullet},\ \HAlg_{\bullet},\ {}_{bi}\HAlg,\ {}_{2}\HAlg
    \end{equation*}
    that happen to contain it.
  \end{enumerate}
\end{theorem}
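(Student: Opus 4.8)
Both halves are categorically dual---\Cref{item:samedom} concerns dominions (equalizers) of algebra maps, \Cref{item:samecodom} codominions (coequalizers) of coalgebra maps---so I will plan out \Cref{item:samecodom} and indicate the dual at the end. The running device is that the codominion of $f\colon H\to H'$ is the coequalizer of the two projections $p_1,p_2\colon H\times_{H'}H\rightrightarrows H$ out of the self-pullback (\Cref{def:dominion}, as used in \Cref{th:coalg-dom}), and that this pair is \emph{reflexive}: the diagonal $\delta\colon H\to H\times_{H'}H$ is a common section. Since ``having the same codominion'' is transitive, it suffices to compare each category on the list to $\Coalg$ (to $\Coalg_{cc}$ for the cocommutative rows).

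The (co)commutativity decorations cost nothing. A codominion is a \emph{quotient} of the domain $H$, and commutativity and cocommutativity pass to quotient (co)algebras; hence the $\Coalg$- or $\Coalg_{cc}$-codominion of a morphism living in a decorated category lands back in that category, where one checks it retains the coequalizer universal property. (For $\Coalg_{cc}$ versus $\Coalg$ one uses that the $\Coalg$-codominion of a map of cocommutative coalgebras is again cocommutative, so corepresents the $\Coalg_{cc}$-coequalizer as well.) This strips the problem down to the plain tower $\Coalg,\ \Bialg,\ \HAlg,\ {}_{bi}\HAlg,\ {}_2\HAlg$ and its cocommutative analogue over $\Coalg_{cc}$, to which the arguments below apply verbatim.

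Two inputs now do the work. First, every forgetful/inclusion functor down to $\Coalg$ is a right adjoint---precisely the adjunctions invoked in the proof of \Cref{pr:commnodiff}---so each preserves the self-pullback; the reflexive pair $(p_1,p_2)$ and its section $\delta$ are therefore literally the same, on underlying coalgebras, in all five categories. Second, $\Bialg$ is the category of monoids in the monoidal category $(\Coalg,\otimes)$, and $\otimes_{\Bbbk}$ preserves reflexive coequalizers in each variable (colimits of coalgebras are computed on underlying vector spaces, the functor $\Coalg\to\cat{Vec}$ being a colimit-preserving left adjoint by \Cref{re:epico-monoalg}, and $\otimes_{\Bbbk}$ is cocontinuous there); by the standard fact that forgetful functors from monoids create reflexive coequalizers under such hypotheses, $\Bialg\to\Coalg$ creates them. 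Combining the two inputs, the $\Bialg$-codominion of a bialgebra map has underlying coalgebra exactly the $\Coalg$-codominion.

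It remains to climb from $\Bialg$ into the Hopf rows, and this is the one genuinely load-bearing step. Let $\pi\colon H\to\overline{H}$ be the $\Bialg$-codominion of a morphism $f$ of Hopf algebras; by the previous paragraph it is simultaneously the $\Coalg$-coequalizer of $p_1,p_2$. I claim the antipode descends. Indeed $\pi S$ again coequalizes the reflexive pair, because $Sp_i=p_iS_P$ (the $p_i$ are Hopf maps, $S_P$ the antipode of the pullback) and $\pi p_1=\pi p_2$; reading $\pi S$ as a coalgebra map $H\to\overline{H}^{\,cop}$ and using the coequalizer universal property, it factors as $\pi S=\overline{S}\,\pi$ for a unique $\overline{S}$, and a short convolution computation along the surjection $\pi$ forces $\overline{S}$ to be an antipode for $\overline{H}$. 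Bijectivity and involutivity descend the same way (run the argument with $S^{-1}$, respectively note $\overline{S}^2\pi=\pi S^2=\pi$). Hence $\overline{H}$ already lies in the relevant Hopf category; as each such category is a right-adjoint (reflective) subcategory of $\Bialg$, an object already inside it is its own codominion there, and all codominions in the tower coincide with the $\Coalg$ one. Part \Cref{item:samedom} is dual throughout: $\Bialg=\mathrm{Comon}(\Alg)$, the functor $\Bialg\to\Alg$ creates coreflexive equalizers, the dominion is the equalizer of the coreflexive self-pushout pair $H'\rightrightarrows H'\amalg_H H'$, and the antipode stabilizes the resulting dominion subalgebra of $H'$. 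I expect input two together with the antipode-descent step to be where essentially all the content sits.
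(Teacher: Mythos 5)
Your architecture is the paper's own: the codominion is the coequalizer of the reflexive pair out of the self-pullback (reflexive via the diagonal), the forgetful functors down to $\Coalg_{\bullet}$ are right adjoints so all the pullbacks within a given tower coincide, and everything reduces to computing reflexive coequalizers at the bottom, indeed in $\cat{Vec}$. Two remarks on execution. Where you invoke the abstract creation-of-reflexive-coequalizers fact for monoids in $(\Coalg,\otimes)$, the paper proves the needed statement directly (\Cref{le:reflok}): for a reflexive pair $f_1,f_2\colon X\to Y$ with common section $s$, the subspace $\{f_1(x)-f_2(x)\ |\ x\in X\}\le Y$ is an ideal because $(f_1(x)-f_2(x))y=f_1(xs(y))-f_2(xs(y))$, so the $\cat{Vec}$-coequalizer is already a quotient bialgebra; \Cref{re:beknotenough} explicitly flags your monadic route as the available ``more conceptual'' alternative. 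This also short-circuits your antipode-descent step: since the parallel morphisms in the Hopf rows are themselves morphisms of Hopf algebras, the kernel above is visibly stable under $S$ (and under $S^{-1}$, with $S^2=\id$ descending likewise), so the quotient inherits the antipode with no convolution verification --- your descent via the universal property is correct but does more work than necessary.

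The one genuinely under-justified step is your $\Coalg$-versus-$\Coalg_{cc}$ bridge. The self-pullbacks in the two categories differ: in $\Coalg_{cc}$ the self-pullback of $f\colon C\to D$ is the cotensor product $C\square_D C$, whereas the $\Coalg$-pullback $C\times_D C$ is in general \emph{not} cocommutative even when $C$ and $D$ are (limits in $\Coalg$ pass through cofree coalgebras). Consequently, cocommutativity of the $\Coalg$-codominion does not by itself show that it has the universal property of the coequalizer of the \emph{other} pair, the one out of $C\square_D C$: the ``full subcategory, object lands inside'' transfer applies only when the entire diagram lies in the subcategory, which here it does not. The identification you need is exactly the equivalence (c)$\iff$(d) of \Cref{th:coalg-dom}: there is a canonical map $C\times_D C\to C\square_D C$ compatible with the two pairs, and the $\cat{Vec}$-coequalizer of the cotensor pair is already a quotient coalgebra, so both pairs compute the same codominion. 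Citing that result (which you already use for the coequalizer description) closes the gap without disturbing the rest of your argument.
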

\begin{proof}
  The two statements are once more mutual duals, with one proof easily adapted into the other, so we focus on \Cref{item:samecodom}.

  Some of the discussion bifurcates over cocommutativity. For any of the categories in question, the codominion of $f$ is the coequalizer of the two maps
  \begin{equation}\label{eq:h'h'h'}
    \begin{tikzpicture}[auto,baseline=(current  bounding  box.center)]
      \path[anchor=base] 
      (0,0) node (l) {$H\times_{H'}H$}
      +(2,0) node (r) {$H$}
      ;
      \draw[->] (l) to[bend left=6] node[pos=.5,auto] {$\scriptstyle $} (r);
      \draw[->] (l) to[bend right=6] node[pos=.5,auto] {$\scriptstyle $} (r);
    \end{tikzpicture}
  \end{equation}
  from the pullback in the corresponding category (by \cite[14J (g)]{ahs}, as in the proof of the equivalence \Cref{item:dom} $\iff$ \Cref{item:prod} in \Cref{th:coalg-dom}). Those pullbacks range over just two possibilities: those computed in $\Coalg_{cc}$ and those computed in $\Coalg$; this is because the relevant forgetful functors to $\Coalg_{\bullet}$ are right adjoints, once more by \cite[diagram (9)]{porst_formal-2} and \cite[\S 2.2]{porst_univ}. The claim, then, amounts to this:

  For $f:H\to H'$ in any of the categories of bialgebras or Hopf algebras in \Cref{item:samecodom}, the forgetful functor to $\Coalg_{\bullet}$ preserves coequalizers of pairs of the form \Cref{eq:h'h'h'}.

  Even more specifically, $\Coalg_{\bullet}\to\cat{Vec}$ is a left adjoint and hence cocontinuous \cite[\S 4.3]{porst_bimon}, we are claiming that the coequalizers of the form \Cref{eq:h'h'h'} in any of the categories $\Bialg_{\bullet}$ or ${}_{\square}\HAlg_{\bullet}$ are computable in $\cat{Vec}$.

  To see this, note that the pair \Cref{eq:h'h'h'} is {\it reflexive} \cite[\S 3.3, preceding Proposition 4]{bw}: the two arrows have a common right inverse $H'\to H'\times_HH'$ whose components are both $\id_{H'}$. The conclusion then follows from \Cref{le:reflok}.
\end{proof}

\begin{lemma}\label{le:reflok}
  \begin{enumerate}[(1)]
  \item\label{item:ordeq} The forgetful functors from $\Bialg_{\bullet}$ or ${}_{\square}\HAlg_{\bullet}$ to $\cat{Vec}$ preserve reflexive equalizers, with `$\bullet$' meaning any combination of `c' or `cc' and `$\square$' either blank, `bi' or `2'.
  \item\label{item:ordcoeq} Dually, the forgetful functors from $\Bialg_{\bullet}$ or ${}_{\square}\HAlg_{\bullet}$ to $\cat{Vec}$ preserve reflexive coequalizers.
  \end{enumerate}
\end{lemma}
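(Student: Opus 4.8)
The plan is to show that each of these forgetful functors \emph{creates} reflexive (co)equalizers, which is stronger than mere preservation. By duality I will concentrate on the coequalizer assertion \Cref{item:ordcoeq}; the equalizer assertion \Cref{item:ordeq} is formally dual, the only adjustment being that the common section used below is replaced by a common retraction of a coreflexive pair, and that one invokes the exactness of $-\otimes V$ over a field (so that $-\otimes V$ preserves kernels, i.e.\ equalizers, just as it preserves cokernels). So fix a reflexive pair $f,g\colon H_1\to H_2$ in one of the categories $\Bialg_{\bullet}$ or ${}_{\square}\HAlg_{\bullet}$, with common section $s\colon H_2\to H_1$ satisfying $fs=gs=\id_{H_2}$, and let $q\colon H_2\to Q$ be the coequalizer of the \emph{underlying} pair in $\vc$. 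I will lift the bialgebra (resp.\ Hopf) structure of $H_2$ along $q$ and check that the result is the coequalizer in the structured category.

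The crux is that \emph{tensoring commutes with reflexive coequalizers} in $\vc$: I claim $q\otimes q\colon H_2\otimes H_2\to Q\otimes Q$ is the coequalizer of the pair $f\otimes f,\ g\otimes g$, which is again reflexive via $s\otimes s$. Indeed $q\otimes q$ is surjective with kernel $\ker q\otimes H_2+H_2\otimes\ker q$, while the $\vc$-coequalizer of $f\otimes f,g\otimes g$ is the quotient by the image of $f\otimes f-g\otimes g$. The inclusion of this image into $\ker(q\otimes q)$ is automatic from $qf=qg$. For the reverse inclusion one writes a typical element of $\ker q$ as $(f-g)(a)$ and uses the identity $(f\otimes f-g\otimes g)(a\otimes s(b))=(f-g)(a)\otimes b$ (which relies on $fs=gs=\id_{H_2}$) to place $\ker q\otimes H_2$ inside the image, and symmetrically for $H_2\otimes\ker q$. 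Thus $q\otimes q$ is the asserted coequalizer; iterating, each $q^{\otimes n}$ is a coequalizer, and in particular an epimorphism, for every $n$.

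With $q\otimes q$ known to be a coequalizer, the structure maps of $H_2$ descend. Because $f,g$ are coalgebra morphisms, $(q\otimes q)\Delta$ coequalizes $f,g$, so there is a unique $\Delta_Q\colon Q\to Q\otimes Q$ with $\Delta_Q q=(q\otimes q)\Delta$; likewise one gets $\varepsilon_Q$ with $\varepsilon_Q q=\varepsilon$ and sets $\eta_Q=q\eta$. Because $f,g$ are algebra morphisms, $qm$ coequalizes the reflexive pair $f\otimes f,g\otimes g$, hence factors as $m_Q(q\otimes q)=qm$ through the coequalizer $q\otimes q$ --- this is the step that genuinely uses reflexivity. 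For Hopf algebras, $qS$ coequalizes $f,g$ since morphisms of Hopf algebras commute with the antipode, producing $S_Q$ with $S_Q q=qS$; for ${}_{bi}\HAlg$ one transports $S^{-1}$ in the same way to obtain a two-sided inverse of $S_Q$. Every identity to be verified --- (co)associativity, the bialgebra compatibility, the antipode axiom, together with commutativity, cocommutativity, or $S^2=\id$ in the decorated variants --- holds in $H_2$ and therefore on $Q$ after precomposition with the appropriate epimorphism $q^{\otimes n}$, which may then be cancelled.

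Finally, the maps just constructed make $q$ a morphism in the structured category, and any structured $h\colon H_2\to K$ with $hf=hg$ factors uniquely through $q$ in $\vc$, the factor being automatically structured because it commutes with every structure map once the relevant $q^{\otimes n}$ is cancelled. Hence $q$ is the coequalizer in $\Bialg_{\bullet}$ (resp.\ in the given Hopf category), its underlying space is the $\vc$-coequalizer, and the forgetful functor preserves it. I expect the only substantive point --- and the sole place reflexivity enters --- to be the crux paragraph: without a common section the image of $f\otimes f-g\otimes g$ need not exhaust $\ker(q\otimes q)$, so $q\otimes q$ would fail to be a coequalizer and the multiplication $m_Q$ would have nowhere to descend.
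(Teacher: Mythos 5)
Your proof is correct and is essentially the paper's argument: the paper likewise forms the $\vc$-coequalizer and uses the common section $s$ to show its kernel $\{f_1(x)-f_2(x)\}$ is an ideal (as well as a coideal), via the identity $(f_1(x)-f_2(x))y=f_1(xs(y))-f_2(xs(y))$, which is exactly your key relation $(f\otimes f-g\otimes g)(a\otimes s(b))=(f-g)(a)\otimes b$ composed with multiplication. Your packaging through the lemma that $-\otimes-$ preserves reflexive coequalizers (so that $q\otimes q$ is itself a coequalizer and $m$ descends by universality) is a slightly more abstract route to the same single point where reflexivity enters, and the rest --- descent of $\Delta$, $\varepsilon$, $S$, and the verification of axioms by cancelling the epimorphisms $q^{\otimes n}$ --- matches the paper's ``it follows that the vector-space coequalizer is in fact a quotient bialgebra'' step.
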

\begin{proof}
  Singling out one of the mutually-dual claims again, say \Cref{item:ordcoeq}:

  Let $f_i:X\to Y$, $i=1,2$ be a reflexive pair of parallel morphisms in any of the categories in question, with a common right inverse $s:Y\to X$: $f_1s=f_2s=\id_Y$. Note now that the kernel
  \begin{equation}\label{eq:f1-f2}
    \{f_1(x)-f_2(x)\ |\ x\in X\}\le Y
  \end{equation}
  of the vector-space coequalizer $Y\to \overline{Y}$ of $(f_i)$ is both a coideal (naturally, since colimits in $\Coalg$ coincide with those in $\cat{Vec}$) and, by the reflexivity assumption, an ideal:
  \begin{equation*}
    (f_1(x)-f_2(x))y = f_1(xs(y)) - f_2(xs(y)),\ \forall x\in X,\ y\in Y,
  \end{equation*}
  and similarly for left multiplication. It follows that the vector-space coequalizer is in fact a quotient bialgebra, and invariance of \Cref{eq:f1-f2} under antipodes (in $\HAlg$) or inverse antipodes (in ${}_{bi}\HAlg$, etc.) is immediate.
\end{proof}

\begin{remark}\label{re:beknotenough}
  The proof of \Cref{th:same-co-dom} touches on the issue of cokernel preservation for the various forgetful functors
  \begin{equation*}
    \Bialg_{\bullet},\ {}_{\square}\HAlg_{\bullet}\xrightarrow{\quad}\Coalg_{\bullet}. 
  \end{equation*}
  Since these functors are in fact {\it monadic} (or {\it tripleable} \cite[\S 3.3]{bw}) by \cite[Theorem 10 and Proposition 28]{porst_formal-2} and \cite[\S 4.3]{porst_bimon}. This entails the preservation of {\it some} reflexive coequalizers ({\it Beck's Theorem}: \cite[\S 3.3, Theorem 10]{bw}) but not, generally, all of them.
  
  As a matter of fact, \Cref{le:reflok} could have been obtained in a more roundabout (and conceptual) manner by tracing through the monadicity proof in \cite{porst_bimon}; the more direct route seemed preferable in this specific case. 
\end{remark}

There is also an analogue, for (co)dominions, of \Cref{le:fieldext}: field extensions preserve the relevant constructions.

\begin{proposition}\label{pr:fieldextcodom}
  The scalar-extension functor $\Bbbk'\otimes_{\Bbbk}-$ along a field extension preserves (co)dominions in all of the categories $\Alg_{\bullet}$, $\Coalg_{\bullet}$, $\Bialg_{\bullet}$ and ${}_{\square}\HAlg_{\bullet}$.
\end{proposition}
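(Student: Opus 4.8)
The plan is to reduce the statement to two explicit linear-algebra computations — one for dominions of algebra maps, one for codominions of coalgebra maps — and then feed them through the faithful flatness of $\Bbbk\le\Bbbk'$ exactly as in \Cref{le:fieldext}. Write $E:=\Bbbk'\otimes_{\Bbbk}-$; here ``preservation'' means that $E$ carries the (co)dominion of $f$ to that of $Ef$. First I would dispose of the mixed and (co)commutative categories: by \Cref{th:same-co-dom} the dominion of a morphism is the same in any of $\Alg_{\bullet},\Bialg_{\bullet},{}_{\square}\HAlg_{\bullet}$ in which it lives (and likewise its codominion across $\Coalg_{\bullet},\Bialg_{\bullet},{}_{\square}\HAlg_{\bullet}$), and the same identifications hold over $\Bbbk'$. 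Since $E$ commutes, up to the evident natural isomorphism, with every forgetful functor down to $\Alg_{\bullet}$ and $\Coalg_{\bullet}$, establishing preservation in $\Alg$ and $\Coalg$ will automatically yield it in all the remaining categories.

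So I would concentrate on $\Alg$ and $\Coalg$, where the relevant (co)dominion admits a purely vector-space description. For an algebra map $A\to B$ the dominion is the Isbell--Stenstr\"om subalgebra $\{b\in B\mid b\otimes 1=1\otimes b\in B\otimes_A B\}$ \cite[\S XI.1, Proposition 1.1]{stens_quot}, i.e. the equalizer of $b\mapsto b\otimes 1$ and $b\mapsto 1\otimes b$; dually, as extracted in the proof of \Cref{th:coalg-dom}, the codominion of a coalgebra map $C\to D$ is the $\cat{Vec}$-coequalizer of the two maps $\id\otimes\varepsilon,\ \varepsilon\otimes\id\colon C\square_D C\rightrightarrows C$. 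Both objects are assembled from (co)tensor products and plain (co)kernels, and this is exactly the data $E$ handles: being faithfully flat it preserves $\otimes_A$ and $\square_D$ by base change and preserves all kernels and cokernels by exactness — the omnibus ``$E$ preserves and reflects (c)tensor products, exactness, surjectivity, injectivity'' used at the close of the proof of \Cref{le:fieldext}. Hence $E$ sends the equalizer computing the algebra dominion, respectively the coequalizer computing the coalgebra codominion, to the corresponding construction over $\Bbbk'$, which is the desired preservation.

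Two crossed cases remain. The codominion of an algebra map $f\colon A\to B$ is its image $A/\ker f$ — the coequalizer of the kernel pair $A\times_B A\rightrightarrows A$, already computed at the vector-space level because $\ker f$ is an ideal — so its preservation is immediate from exactness of $E$. The genuinely delicate case, and the one I expect to be the main obstacle, is the dominion of a coalgebra map: by \Cref{def:dominion} it is the \emph{coalgebra} equalizer of the cokernel pair $D\rightrightarrows D\coprod_C D$, and coalgebra equalizers are not the vector-space equalizers $\{d\mid \iota_1 d=\iota_2 d\}$ but the largest subcoalgebras sitting inside them. Preserving them therefore requires $E$ to commute with the ``largest subcoalgebra inside a given subspace'' operation — a faithfully flat descent statement that does not drop out formally from exactness the way the (co)tensor computations do, and precisely the place where one must invoke the continuity of $E$ on coalgebra-type categories rather than mere flatness.
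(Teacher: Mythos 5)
Your handling of the two substantive cases is exactly the paper's argument: reduce via \Cref{th:same-co-dom} (plus compatibility of $E=\Bbbk'\otimes_\Bbbk-$ with the forgetful functors) to $\Alg$ and $\Coalg$; describe the $\Alg$-dominion as the equalizer of $b\mapsto b\otimes 1$, $b\mapsto 1\otimes b$ into $B\otimes_AB$ and the $\Coalg$-codominion as the $\vc$-coequalizer of $\id\otimes\varepsilon,\varepsilon\otimes\id\colon C\square_DC\rightrightarrows C$; then let faithful flatness of $\Bbbk\le\Bbbk'$ preserve the (co)tensor products and (co)kernels involved. Your dispatch of algebra codominions as images, preserved by exactness, also matches the paper, which dismisses that case as ``uninteresting.''

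The genuine flaw is your final paragraph, which manufactures an obstacle that is not there and then leaves it unresolved. The dominion of a coalgebra morphism $f\colon C\to D$ is simply its image, for the same reason the algebra codominion is: colimits in $\Coalg$ are computed in $\vc$, so the cokernel pair $D\rightrightarrows D\coprod_CD$ is the vector-space pushout $(D\oplus D)/\{(f(c),-f(c))\mid c\in C\}$, and the $\vc$-equalizer of the two coprojections is precisely $f(C)$. Since $f(C)$ is already a subcoalgebra, the ``largest subcoalgebra inside'' operation acts trivially on it, and the $\Coalg$-equalizer of the cokernel pair --- i.e.\ the dominion --- is $f(C)$ itself; its preservation under $E$ then follows from exactness alone, with no descent statement and no continuity of $E$ on $\Coalg$ required. (This collapse is exactly why the paper declares coalgebra dominions uninteresting alongside algebra codominions.) As written, your proposal is incomplete at this point: you assert that one ``must invoke the continuity of $E$ on coalgebra-type categories'' but neither prove nor cite such a result, so the case is left open. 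It is repairable --- the commutation of $E$ with the largest-subcoalgebra construction is precisely \Cref{le:lgsubcoalg}, proved independently later in the paper --- but the cleaner fix is to notice that the subspace in question is already a subcoalgebra, making the worry vacuous.
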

\begin{proof}
  \Cref{th:same-co-dom} reduces the problem to $\Alg$ and $\Coalg$. Moreover, since {\it codominions} of algebra morphisms and {\it dominions} of coalgebra morphisms are uninteresting (they are just the images of the respective morphisms), we discuss dominions in $\Alg$ and codominions in $\Coalg$.

  We saw in the course of the proof of \Cref{th:coalg-dom} that the codominion of $C\to D$ is the coequalizer (in $\vc$) of the two maps $C\square_DC\to C$. Dually, the dominion of an algebra morphism $A\to B$ is the equalizer of the two maps $B\to B\otimes_AB$ \cite[\S XI.1, Proposition 1.1]{stens_quot}. As in the proof of \Cref{le:fieldext}, field scalar extensions preserve all of these constructions.
\end{proof}

\begin{remarks}\label{res:fieldextcocont}
  \begin{enumerate}[(1)]
    
  \item See \cite[Proposition 5.3]{bg_epi-lie} for an analogue of \Cref{pr:fieldextcodom} for finite-dimensional Lie algebras.
    
  \item A version of \Cref{pr:fieldextcodom} holds for algebraic groups \cite[Theorem 5 (ii)]{brion_epi}: the latter's specialization to {\it linear} algebraic groups amounts precisely to the $\HAlg_c$-codominion instance of \Cref{pr:fieldextcodom}.

  \end{enumerate}
\end{remarks}

\Cref{th:coalg-dom} helps relate codominions to the notion of {\it coflatness}. Recall \cite[\S 10.8]{brz-wis} that a left $C$-comodule $M$ is
\begin{itemize}
\item {\it coflat} if the functor $\cM^C\xrightarrow{\quad\square_CM\quad}\vc$ is exact;
\item and {\it faithfully coflat} if that functor is in addition faithful.
\end{itemize}
We apply the term to coalgebra morphisms $C\to D$: they are right (faithfully) coflat if $C\in \cM^D$ is (faithfully) coflat, and similarly on the left. 

\begin{remark}\label{re:coflatinj}
  As it happens, for coalgebras coflatness and injectivity interact very conveniently:
  \begin{itemize}
  \item coflatness is equivalent to injectivity in the relevant category of comodules (\cite[Theorem 2.4.17]{dnr} or \cite[\S 10.12, (ii) (1)]{brz-wis});
  \item and faithful coflatness is equivalent \cite[\S 10.12, (ii) (2)]{brz-wis} to the comodule in question being an {\it injective cogenerator} (recall \cite[\S V.7]{mcl} that a cogenerator in a category $\cC$ is an object $d$ such that whenever $\cC$-morphisms $f_i:c\to c'$, $i=1,2$ differ so do $gf_i$ for some $g:c'\to d$).
  \end{itemize}
\end{remark}

One can now replicate (and dualize) the usual discussion of {\it faithfully flat descent} (\cite[Teorema]{cip_ff}, \cite[Theorem 3.8]{nuss_ff}) for categories of modules, in its formulation in terms of categories of comodules over {\it corings} \cite[\S 25.4]{brz-wis}.

\begin{construction}
  For any coalgebra $C$
  \begin{itemize}
  \item the category ${}^C\cM^C$ of $C$-bicomodules is {\it monoidal} (\cite[\S VII.1]{mcl}, \cite[Definition 2.1.1]{egno_tensor}), with $\square_{C}$ as the tensor bifunctor and $C$ as the monoidal unit.
  \item It thus makes sense to speak of {\it algebras internal to ${}^C\cM^C$} \cite[Definition 7.8.1]{egno_tensor} and categories of {\it modules} over them \cite[Definition 7.8.5]{egno_tensor}
  \item Similarly, for a unital internal algebra $A\in {}^C\cM^C$, we denote by $\cM_A$ the category (of {\it right $A$-modules}) consisting of right $C$-comodules $V\in \cM^C$ equipped with a morphism
    \begin{equation*}
      V\square_CA\to V\quad\text{in}\quad \cM^C,
    \end{equation*}
    associative and unital in the obvious sense.
    
    The difference to \cite[Definition 7.8.5]{egno_tensor} lies in $V$ being only a right $C$-comodule rather than a $C$-{\it bi}comodule.
  \end{itemize}
  Consider, now, a coalgebra morphism $C\to D$.
  \begin{itemize}
  \item The cotensor product $C\square_DC$ is a unital algebra in ${}^C\cM^C$, with multiplication
    \begin{equation*}
      (C\square_DC)\square_C(C\square_DC)\cong C\square_DC\square_DC\xrightarrow{\quad\id\otimes\varepsilon_C\otimes\id} C\square_DC
    \end{equation*}
    and unit
    \begin{equation*}
      C\xrightarrow{\quad\Delta_C\quad} C\square_DC.
    \end{equation*}
  \item For any (right) $D$-comodule $W$, the cotensor product $W\square_DC$ is not only a right $C$-comodule, but also a right $C\square_DC$-module with multiplication
    \begin{equation*}
      (W\square_DC)\square_C(C\square_DC)\cong W\square_DC\square_DC\xrightarrow{\quad\id\otimes\varepsilon_C\otimes\id \quad} W\square_DC.
    \end{equation*}

  \item This gives a functor
    \begin{equation*}
      \cM^D\xrightarrow{\quad -\square_DC\quad}\cM_{C\square_DC},
    \end{equation*}
    the right half of an adjunction
    \begin{equation}\label{eq:fcadj}
      \begin{tikzpicture}[auto,baseline=(current  bounding  box.center)]
        \path[anchor=base] 
        (0,0) node (l) {$\cM_{C\square_DC}$}
        +(2,0) node (m) {$\bot$}
        +(4,0) node (r) {$\cM^D$,}
        ;
        \draw[->] (l) to[bend left=20] node[pos=.5,auto] {$\scriptstyle N\xmapsto{\quad} N_{C\square_DC}$} (r);
        \draw[->] (r) to[bend left=20] node[pos=.6,auto] {$\scriptstyle -\square_DC$} (l);
      \end{tikzpicture}
    \end{equation}
    where the space $N_{C\square_DC}$ of {\it coinvariants} (by analogy to the group-homology term \cite[\S 1.1]{ev_coh}) of the $C\square_DC$-module $N$ is defined as the (vector-space) coequalizer
    \begin{equation}\label{eq:coeqn}
      \begin{tikzpicture}[auto,baseline=(current  bounding  box.center)]
        \path[anchor=base] 
        (0,0) node (l) {$N\square_D C$}
        +(4,0) node (m) {$N$}
        +(6,0) node (r) {$N_{C\square_DC}$}
        ;
        \draw[->] (l) to[bend left=6] node[pos=.5,auto] {$\scriptstyle \text{action}$} (m);
        \draw[->] (l) to[bend right=6] node[pos=.5,auto,swap] {$\scriptstyle \id\otimes\varepsilon$} (m);
        \draw[->] (m) to[bend left=0] node[pos=.5,auto] {$\scriptstyle $} (r);
      \end{tikzpicture}
    \end{equation}
  \end{itemize}
\end{construction}

Having set up all of this, \Cref{th:fcdesc} is a straightforward dualization of the standard arguments for faithfully flat descent \cite[Teorema]{cip_ff}. We give a proof for completeness; as noted in \cite[\S 4.2]{del_tann} though, such results are nowadays easily available as consequences of Beck's \cite[\S 3.3, Theorem 10]{bw} on (co)monadic functors.

\begin{theorem}\label{th:fcdesc}
  let $f:C\to D$ be a coalgebra morphism. The adjunction \Cref{eq:fcadj} is an equivalence in either of the following conditions:
  \begin{itemize}
  \item $f$ is left faithfully coflat;
  \item or $f$ is a split epimorphism (i.e. has a right inverse) in ${}^D\cM^D$.
  \end{itemize}
\end{theorem}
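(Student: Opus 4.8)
The plan is to recognize that \Cref{eq:fcadj} being an equivalence is exactly a \emph{monadicity} statement and to verify Beck's criterion \cite[\S 3.3, Theorem 10]{bw} separately in the two cases. Write $f_{\bullet}\colon\cM^C\to\cM^D$ for corestriction of comodule structures along $f$; this is left adjoint to the coinduction functor $-\square_D C\colon\cM^D\to\cM^C$, and the induced monad on $\cM^C$ is $-\square_C(C\square_D C)$, whose Eilenberg--Moore category is precisely $\cM_{C\square_D C}$ (free $(C\square_D C)$-modules). Under this identification $-\square_D C\colon\cM^D\to\cM_{C\square_D C}$ is the comparison functor, and its left adjoint — supplied by the construction preceding the theorem — is the coinvariants coequalizer \Cref{eq:coeqn}. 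Hence showing \Cref{eq:fcadj} is an equivalence is the same as showing $-\square_D C\colon\cM^D\to\cM^C$ is monadic, for which (a left adjoint being already in hand and $\cM^D$ being cocomplete) it suffices to check that the functor is conservative and preserves the relevant split/reflexive coequalizers. This is the literal dualization of faithfully flat descent \cite[Teorema]{cip_ff}, \cite[\S 4.2]{del_tann}.

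First I would dispose of the faithfully coflat case. By \Cref{re:coflatinj} left faithful coflatness says exactly that the composite $\cM^D\xrightarrow{-\square_D C}\cM^C\to\vc$ is exact and faithful. Exactness gives preservation of all coequalizers (they are cokernels of differences, and colimits in $\cM^C$ are created by the forgetful functor to $\vc$), which settles the preservation condition; a faithful exact functor out of a Grothendieck category reflects isomorphisms (a morphism whose kernel and cokernel vanish after applying the functor had trivial kernel and cokernel to begin with), which settles conservativity. Beck's theorem then yields the equivalence.

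For the split case let $s\colon D\to C$ be a $D$-bicomodule section of $f$, so $fs=\mathrm{id}_D$. Then $\mathrm{id}_W\square_D s$ is a natural section of the adjunction counit $\mathrm{id}_W\square_D f\colon W\square_D C\to W$, exhibiting $\mathrm{id}_{\cM^D}$ as a natural retract of $f_{\bullet}\circ(-\square_D C)$; since $U(g)$ an isomorphism forces $f_{\bullet}U(g)$ an isomorphism and $g$ is a retract of the latter, $U=-\square_D C$ is conservative. For the coequalizer condition the section furnishes the customary contracting homotopy on the cobar/Amitsur diagrams: it turns the coequalizer \Cref{eq:coeqn} computing coinvariants, and more generally the $U$-split pairs occurring in Beck's criterion, into split (hence absolute) coequalizers, which are preserved by every functor. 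Beck's theorem again applies; equivalently, one may read off explicit inverses to the unit and counit of \Cref{eq:fcadj} directly from $s$. Note this case does \emph{not} reduce to the previous one: a section makes $-\square_D C$ faithful but not obviously exact, since the complementary summand of $D$ in $C$ need not be coflat.

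The hard part will be bookkeeping rather than conceptual. I expect the main friction to be pinning down the identification of $\cM_{C\square_D C}$ with the Eilenberg--Moore category together with the matching of the coinvariants functor \Cref{eq:coeqn} to the comparison functor's left adjoint, carefully enough that the two parallel maps in \Cref{eq:coeqn} are seen to form exactly the reflexive/split pairs whose coequalizers the version of Beck's theorem one invokes requires. In the split case the delicate point is writing the contracting homotopy out of $s$ and checking it induces a genuine splitting of the relevant coequalizers; once that is done, both branches close uniformly.
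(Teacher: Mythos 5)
Your proposal follows essentially the same route as the paper: identify \Cref{eq:fcadj} with the comparison adjunction for the corestriction--coinduction adjunction (so the statement becomes monadicity of $-\square_D C\colon \cM^D\to\cM^C$), verify Beck's criterion directly in the faithfully coflat case via exactness plus faithfulness, and in the split case use the natural section $\id\square_D s$ of the counit --- exactly the datum the paper feeds into the dual of \cite[Proposition 3.16]{mesabl_desc}. Two small corrections for the write-up: the Eilenberg--Moore category of the monad is \emph{all} $(C\square_D C)$-modules, not the free ones (free modules form the Kleisli category), and your assertion that the section turns every $U$-split pair into a split coequalizer in $\cM^D$ is not a one-line consequence of the contracting homotopy but is precisely the content of Mesablishvili's proposition (the homotopy splits the cobar diagrams, i.e.\ \Cref{eq:coeqn}, but an arbitrary $U$-split pair requires the retract argument of that proposition), so you should either cite it, as the paper does, or reproduce its proof rather than wave at it.
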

\begin{proof}
  Consider the commutative diagram
   \begin{equation*}
     \begin{tikzpicture}[auto,baseline=(current  bounding  box.center)]
       \path[anchor=base] 
       (0,0) node (l) {$\cM^D$}
       +(4,0) node (r) {$\cM_{C\square_DC}$}
       +(2,-1) node (d) {$\cM^C$}
       ;
       \draw[->] (l) to[bend left=6] node[pos=.5,auto] {$\scriptstyle -\square_DC$} (r);
       \draw[->] (l) to[bend right=6] node[pos=.5,auto,swap] {$\scriptstyle -\square_DC$} (d);
       \draw[->] (r) to[bend left=6] node[pos=.5,auto] {$\scriptstyle \text{forget}$} (d);
       \end{tikzpicture}
   \end{equation*}
   Now observe that
   \begin{itemize}
   \item the left-hand arrow is right adjoint to the scalar corestriction functor $\cM^C\to \cM^D$;
   \item the monad on $\cM^C$ associated to that adjunction \cite[\S 3.1, Theorem 1]{bw} is nothing but $-\square_DC:\cM^C\to \cM^C$;
   \item and the horizontal arrow is the {\it comparison functor} \cite[\S 3.2, following Proposition 2]{bw} associated to that selfsame adjunction (and monad). 
   \end{itemize}
   The condition that the horizontal arrow be an equivalence is now equivalent to the monadicity of $-\square_DC:\cM^D\to \cM^C$, this, in turn holds in the two flagged cases:
   \begin{itemize}
   \item If $C\in {}^D\cM$ is faithfully coflat then the conclusion follows from Beck's theorem (\cite[\S 3.3, Theorem 10]{bw}): we have already observed it is a right adjoint, and it reflects isomorphisms and preserves {\it all} coequalizers (not just suitable reflexive ones) by faithful coflatness.
   \item On the other hand, if $f:C\to D$ has a right inverse in ${}^D\cM^D$, then the counit
     \begin{equation*}
       -\square_DC\to \id\qquad\left(\text{natural transformation of functors}\quad\cM^D\to \cM^D\right)
     \end{equation*}
     of the adjunction between $\cM^D$ and $\cM^C$ has a right inverse, hence the monadicity claim by (the categorical dual of) \cite[Proposition 3.16]{mesabl_desc}.
   \end{itemize}
\end{proof}

To circle back to codominions:

\begin{corollary}\label{cor:coflatcodom}
  A morphism $C\to D$ in any of the categories 
  \begin{equation}\label{eq:coalgcats}
    \Coalg_{\bullet},\quad
    \Bialg_{\bullet}\quad\text{or}\quad
    {}_{\square}\HAlg_{\bullet}
  \end{equation}
  that
  \begin{itemize}
  \item is left or right faithfully coflat;
  \item or is a split epimorphism in ${}^{D}\cM^{D}$
  \end{itemize}  
  is a codominion. 
\end{corollary}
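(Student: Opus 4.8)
The plan is to reduce everything to $\Coalg$ and then read the conclusion off the faithfully coflat descent of \Cref{th:fcdesc}. First, by \Cref{th:same-co-dom} \Cref{item:samecodom} the codominion of $C\to D$ does not depend on which of the categories \Cref{eq:coalgcats} one computes it in, so we may as well regard $f:C\to D$ as a morphism of $\Coalg$; the two hypotheses (one-sided faithful coflatness, or a $D$-bicomodule splitting) concern only the underlying coalgebra morphism and its comodule structures, so nothing is lost in this reduction. Recall next, from the proof of \Cref{th:coalg-dom}, that the codominion $\overline{D}$ of $f$ is precisely the coequalizer, formed in $\vc$ (and automatically a quotient coalgebra), of the two maps
\[
  \id_C\otimes\varepsilon,\quad \varepsilon\otimes\id_C:\ C\square_D C\longrightarrow C.
\]
It therefore suffices to prove that this coequalizer is $D$ itself, i.e. that the canonical comparison $\overline{D}\to D$ is an isomorphism.

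I would then invoke \Cref{th:fcdesc}: under either of the stated hypotheses the adjunction \Cref{eq:fcadj} between $\cM^D$ and $\cM_{C\square_D C}$ is an equivalence, so in particular its counit $(W\square_D C)_{C\square_D C}\to W$ is an isomorphism for every $W\in\cM^D$. Specializing to $W=D$ and setting $N:=D\square_D C\cong C$, the coinvariants $N_{C\square_D C}$ are computed, by the defining coequalizer \Cref{eq:coeqn}, as the $\vc$-coequalizer of the module action and the map $\id\otimes\varepsilon$ on $N\square_D C\cong C\square_D C \rightrightarrows N\cong C$. Comparison along the counit then yields $C_{C\square_D C}\cong D$, compatibly with the natural map to $D$.

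The crux — and the step I expect to demand the most care — is to verify that this last coequalizer is literally the codominion coequalizer of the first paragraph. Unwinding the $C\square_D C$-module structure carried by $N\cong D\square_D C$ through the identification $\id_D\otimes\varepsilon_C\otimes\id_C$, the action map $C\square_D C\to C$ should become $\varepsilon\otimes\id_C$, while the structural map of \Cref{eq:coeqn} is $\id_C\otimes\varepsilon$; the two parallel pairs then coincide, giving $\overline{D}\cong C_{C\square_D C}\cong D$ with the isomorphism compatible with the comparison $\overline{D}\to D$. Finally, since \Cref{th:fcdesc} is phrased for \emph{left} faithful coflatness (and, symmetrically, for a splitting in ${}^{D}\cM^{D}$, which needs no adjustment), I would cover the \emph{right} faithfully coflat case by passing to the co-opposite coalgebras $C^{cop}\to D^{cop}$: the flip $C\square_D C\cong C^{cop}\square_{D^{cop}}C^{cop}$ interchanges $\id\otimes\varepsilon$ and $\varepsilon\otimes\id$, so it leaves the (symmetric) codominion coequalizer unchanged while turning right faithful coflatness into left faithful coflatness, whence the previous argument applies verbatim.
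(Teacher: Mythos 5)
Your proposal is correct and takes essentially the same route as the paper's proof: reduction to $\Coalg$ via \Cref{th:same-co-dom} \Cref{item:samecodom}, the description of the codominion as the $\vc$-coequalizer of \Cref{eq:csqc}, and then \Cref{th:fcdesc} applied with $N=D\square_DC\cong C$, under which the two arrows of \Cref{eq:coeqn} specialize exactly to those of \Cref{eq:csqc}. Your explicit verification that the action becomes $\varepsilon\otimes\id_C$, and the co-opposite trick handling the right faithfully coflat case, merely spell out sidedness details the paper's proof leaves implicit.
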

\begin{proof}
  Per \Cref{th:same-co-dom} \Cref{item:samecodom}, it is enough to handle $\Coalg$. That claim, though, follows immediately from the description of the codominion of $C\to D$ as the coequalizer \Cref{eq:csqc} and from \Cref{th:fcdesc}: for
  \begin{equation*}
    N=C\cong D\square_DC\in \cM_{C\square_DC}
  \end{equation*}
  the two parallel left-hand arrows of \Cref{eq:coeqn} specialize precisely to those of \Cref{eq:csqc}.
\end{proof}

\begin{remark}\label{re:recovertak}
  In the context of left module quotient coalgebras of Hopf algebras, fitting into the framework of \Cref{pr:sameclosure}, \Cref{cor:coflatcodom} specializes back to (a particular case of) \cite[Theorem 2]{tak_rel}.
\end{remark}

Recall that the {\it cosemisimple} coalgebras (\cite[Definition 2.4.1 and Lemma 2.4.3]{montg_hopf}, \cite[Theorem 3.1.5]{dnr}) are those whose (left or right) comodules are all injective (or equivalently, projective).

\begin{corollary}\label{cor:ontocoss}
  A surjection in any of the categories \Cref{eq:coalgcats} onto a cosemisimple object is a codominion.
\end{corollary}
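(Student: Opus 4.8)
The plan is to obtain this as an immediate consequence of \Cref{cor:coflatcodom}, by checking that a surjection onto a cosemisimple object is (say, right) faithfully coflat. First, \Cref{th:same-co-dom} \Cref{item:samecodom} makes the codominion insensitive to which of the categories \Cref{eq:coalgcats} one works in, so I would at once reduce to a surjection $f:C\twoheadrightarrow D$ in $\Coalg$ with $D$ cosemisimple, and try to verify the first bullet of \Cref{cor:coflatcodom}.

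Next I would equip $C$ with the right $D$-comodule structure $(\id_C\otimes f)\Delta_C$. A direct check, using only that $f$ is a coalgebra map (so $(f\otimes f)\Delta_C=\Delta_D f$), shows that $f$ is then a morphism in $\cM^D$ from $C$ to the regular comodule $D$, and it is manifestly surjective. Since $D$ is cosemisimple, its regular right comodule is projective, so this surjection splits in $\cM^D$; consequently $D$ is a direct summand of $C$ as a right $D$-comodule, say $C\cong D\oplus C'$ in $\cM^D$.

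It then remains to see that $C$ is right faithfully coflat, i.e. that the cotensor functor $-\,\square_D C$ is exact and faithful. Exactness is free: over a cosemisimple coalgebra every comodule is injective and hence coflat (\Cref{re:coflatinj}), so $-\,\square_D C$ is automatically exact. For faithfulness I would feed in the summand decomposition $C\cong D\oplus C'$ together with the additivity of $\square_D$ and the canonical isomorphism $-\,\square_D D\cong\id$, which yields $-\,\square_D C\cong\id\oplus(-\,\square_D C')$; a functor containing the identity as a direct summand is visibly faithful. Thus $f$ is right faithfully coflat, and \Cref{cor:coflatcodom} concludes that it is a codominion.

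The only genuinely nontrivial step is the faithfulness claim (equivalently, that $C$ is a cogenerator in the sense of \Cref{re:coflatinj}); everything else is either formal or delivered by cosemisimplicity. The crux is that the splitting of $f$ in $\cM^D$ is exactly what upgrades the bare surjectivity hypothesis into the cogenerator property, by forcing $D$—which is faithfully coflat, hence a cogenerator, over itself—to sit inside $C$ as a retract.
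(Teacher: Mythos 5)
Your proposal is correct and follows essentially the same route as the paper: both proofs use projectivity of the regular comodule over the cosemisimple codomain to split the surjection in $\cM^D$, obtain $D$ as a direct summand of $C$, and deduce faithful coflatness (coflatness being automatic from cosemisimplicity). The only cosmetic difference is that where you verify faithfulness directly via $-\,\square_D C\cong\id\oplus(-\,\square_D C')$, the paper instead cites the characterization of faithful coflatness as the injective-cogenerator property, noting that a comodule containing the coalgebra itself as a summand inherits that property.
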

\begin{proof}
  This follows from \Cref{cor:coflatcodom} once we observe that for cosemisimple $C$ a (right, say) $C$-comodule $M$ surjecting onto $C$ is faithfully coflat. Indeed, $M$ coflat by assumption, and, $C$ being projective in $\cM^C$ \cite[Theorem 3.1.5]{dnr}, $M$ will also contain it as a summand. But {\it every} coalgebra is an injective cogenerator in its own category right comodules \cite[\S 9.1]{brz-wis}, hence so is $M\in \cM^C$ (so that it is also faithfully coflat \cite[\S 10.12, (ii) (2)]{brz-wis}).
\end{proof}

\begin{remark}\label{re:bg-ss-lalg}
  Compare \Cref{cor:ontocoss} to the remark made in passing in \cite[Addenda]{bg_epi-lie} (and attributed to Hochschild) that every semisimple Lie subalgebra of any finite-dimensional Lie algebra is a dominion in the category of finite-dimensional Lie algebras.

  
  Since by \cite[Theorems 2.2 and 2.3]{bg_epi-lie} that question is only interesting in characteristic zero, where semisimple Lie algebras have semisimple categories of finite-dimensional representations \cite[\S 6.3, Theorem]{hmph_intro}, the connection to cosemisimplicity starts to become apparent. 
\end{remark}

As a matter of fact, the second bullet point in \Cref{cor:coflatcodom} is sub-optimal: {\it one}-sided splitting suffices.

\begin{theorem}\label{th:onesplit}
  The conclusion of \Cref{cor:coflatcodom} holds provided $C\to D$ has a right inverse as a either a left or right $D$-comodule morphism. 
\end{theorem}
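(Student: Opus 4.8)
The plan is to show directly that $f\colon C\to D$ exhibits $D$ as a \emph{split} (hence absolute) coequalizer of the two structure maps $C\square_D C\rightrightarrows C$ appearing in \Cref{eq:csqc}; since split coequalizers are preserved by every functor and are in particular genuine coequalizers, this at once identifies $D$ with the codominion. First I would reduce to $\Coalg$ via \Cref{th:same-co-dom} \Cref{item:samecodom}, and recall from the proof of \Cref{th:coalg-dom} (the equivalence involving \Cref{item:cotens}) that the codominion of $f$ is precisely the $\vc$-coequalizer of $p_1:=\id_C\otimes\varepsilon$ and $p_2:=\varepsilon\otimes\id_C$ from $C\square_D C$ to $C$, and that this vector-space coequalizer is automatically a quotient coalgebra of $C$. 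Being split epic, $f$ is onto, so $D=C/\ker f$, and the task becomes exhibiting $f$ as the coequalizer of $p_1,p_2$.

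Treat, say, the case of a right $D$-colinear right inverse $s\colon D\to C$ with $fs=\id_D$ (the left-handed case being identical after passing to the co-opposite coalgebras $C^{cop}$, $D^{cop}$, which interchanges the two sides together with the two maps $p_i$). The split-coequalizer datum I would write down is the contraction
\begin{equation*}
  r\colon C\to C\otimes C,\qquad r(c)=s(f(c_1))\otimes c_2 .
\end{equation*}
Using $fs=\id_D$ together with the easy consequence $\varepsilon_C\circ s=\varepsilon_D$ of right colinearity (and $\varepsilon_D\circ f=\varepsilon_C$), the counit identities $p_2 r=\id_C$ and $p_1 r=sf$ are routine; combined with $fp_1=fp_2$ (the defining property of the cotensor product) and $fs=\id_D$, these are exactly the axioms making $C\square_D C\rightrightarrows C\xrightarrow{f}D$ a split coequalizer.

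The one genuinely load-bearing point — and the step I expect to be the crux — is the verification that $r$ actually lands in $C\square_D C$, i.e.\ that $(\rho^r\otimes\id)r(c)=(\id\otimes\rho^l)r(c)$ in $C\otimes D\otimes C$, where $\rho^r,\rho^l$ denote the $D$-coactions on $C$ induced by $f$. This is precisely where the hypothesis is spent: the right coaction on the first leg $s(f(c_1))$ is computed \emph{solely} from the right colinearity of $s$, yielding $s(f(c_1))\otimes f(c_2)\otimes c_3$, while the left coaction on the second leg $c_2$ produces the identical expression purely by coassociativity and the fact that $f$ is a coalgebra map. Thus only one side of colinearity is ever consumed, which is exactly the improvement of \Cref{th:onesplit} over the two-sided (bicomodule) splitting hypothesis of \Cref{cor:coflatcodom}. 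Once membership $r(c)\in C\square_D C$ is confirmed, the coequalizer is split and hence absolute, so the forgetful functor $\Coalg\to\vc$ carries it to a coequalizer and $D$ is the codominion, as claimed.
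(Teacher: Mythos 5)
Your proposal is correct and follows essentially the same route as the paper: both exhibit the codominion fork as a \emph{split} (contractible, hence absolute) coequalizer in $\vc$, with the contraction manufactured from the one-sided $D$-colinear section $s$ and with exactly one side of colinearity spent on verifying that the contraction lands in the cotensor product. The only (cosmetic) differences are the handedness convention and that the paper splits the more general fork \Cref{eq:splitfork} for an arbitrary $N\in\cM^D$ (thereby getting full faithfulness of $-\square_DC$), of which your contraction $r(c)=s(f(c_1))\otimes c_2$ is precisely the specialization $N=D$ transported along the isomorphism $D\square_DC\cong C$.
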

\begin{proof}
  The proof of \Cref{cor:coflatcodom} makes it apparent that one does not quite need the adjunction \Cref{eq:fcadj} to be an equivalence: it is enough that its counit be an equivalence (in other words \cite[\S IV.3, Theorem 1]{mcl}: that the right adjoint $-\square_DC$ be fully faithful).

  In other words, fora coalgebra morphism $f:C\to D$ to be a codominion all we need is that
  \begin{equation}\label{eq:splitfork}
    \begin{tikzpicture}[auto,baseline=(current  bounding  box.center)]
      \path[anchor=base] 
      (0,0) node (l) {$N\square_D C\square_DC$}
      +(4,0) node (m) {$N\square_DC$}
      +(6,0) node (r) {$N$}
      ;
      \draw[->] (l) to[bend left=6] node[pos=.5,auto] {$\scriptstyle \id_N\otimes\id_C\otimes\varepsilon$} (m);
      \draw[->] (l) to[bend right=6] node[pos=.5,auto,swap] {$\scriptstyle \id_N\otimes\varepsilon\otimes\id_C$} (m);
      \draw[->] (m) to[bend left=0] node[pos=.5,auto] {$\scriptstyle \id_N\otimes\varepsilon$} (r);
    \end{tikzpicture}
  \end{equation}
  be a coequalizer in $\vc$. In the presence of a left $D$-comodule right inverse $s:D\to C$ to $f$, the maps
  \begin{equation*}
    N\xrightarrow{\quad\id_N\otimes s\quad}N\square_DC
    \quad\text{and}\quad
    N\square_DC\xrightarrow{\quad\id\otimes\id\otimes s\quad} N\square_DC\square_DC
  \end{equation*}
  make \Cref{eq:splitfork} into a {\it split} (or {\it contractible}) {\it coequalizer} \cite[\S 3.3, following Proposition 1]{bw} in $\vc$, so in particular (as the name suggests) a $\vc$-coequalizer \cite[\S 3.3, Proposition 2]{bw}.
\end{proof}

\begin{remark}\label{re:bichonsplit}
  The remark behind the proof of \Cref{th:onesplit} is precisely dual to \cite[Proposition 2.2]{bichon_expect}, concerned with algebra morphisms $A\to B$ split as right $A$-module maps.
\end{remark}

\section{CQG algebras and compact groups}\label{se:cqg}

One class of Hopf algebras particularly well-suited for generalizing compact-group representation theory is that of {\it CQG algebras} \cite[Definition 2.2]{dk_cqg}. One version of that definition is as follows:

\begin{definition}\label{def:cqg-alg}
  A {\it CQG algebra} is
  \begin{itemize}
  \item a complex Hopf $*$-algebra $H$ (\cite[\S 1]{dk_cqg}: complex Hopf algebra equipped with a conjugate-linear multiplication-reversing involution `$*$' such that both $\Delta$ and $\varepsilon$ are $*$-morphisms);

    
  \item all of whose finite-dimensional comodules
    \begin{equation*}
      V\ni v\xmapsto{\quad}x_0\in v_1\in V\otimes H
    \end{equation*}
    are {\it unitarizable}: admitting an inner product $\braket{-\mid -}$ (linear in the second variable, say), compatible with the comodule structure in the sense that
    \begin{equation*}
      \braket{v\mid w_0}Sw_1 = \braket{v_0\mid w}v_1^*,\ \forall v,w\in V. 
    \end{equation*}
    \end{itemize}
    $\cqg$ will denote the category of CQG algebras, with Hopf *-algebra morphisms as its arrows.
\end{definition}

\begin{remark}\label{re:cqg-coss}
  Cosemisimplicity is a consequence of \Cref{def:cqg-alg}: the orthogonal complement of a subcomodule is again a subcomodule, hence the semisimplicity of the category $\cM^H$.
\end{remark}

The motivation stems from the fact that the {\it commutative} CQG algebras are precisely the algebras of {\it representative functions} \cite[\S III.1, Definition 1.1]{bd_lie} on compact groups, and the usual Tannakian duality machinery \cite[\S III.7]{bd_lie} can phrased as a contravariant equivalence between the category $\cqg_c$ of commutative CQG algebras and the category of compact groups \cite[Theorems 2.6 and 2.8]{wang_fp}. For that reason, one thinks of the category $\cqg$ as dual to that of {\it compact quantum groups} (this being one possible definition of the latter category, though not the only one: see e.g. \cite[Definition 2.3 and subsequent discussion]{wang_fp}).

Recall, in this context, that embeddings of compact groups are equalizers \cite[Theorem 2.1]{chi_multipush}, whence

\begin{corollary}\label{cor:cpct}
  The dominion of a compact-group morphism is the embedding of its image in its codomain.

  Dually, the codominion of a morphism in the category $\cqg_c$ of commutative CQG algebras is the surjection of its domain onto its image.  \qedhere
\end{corollary}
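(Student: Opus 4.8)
The plan is to prove the compact-group assertion directly and then obtain the $\cqg_c$ assertion from it via the contravariant Tannakian equivalence \cite[Theorems 2.6 and 2.8]{wang_fp}, under which dominions and codominions interchange.

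For the first assertion, let $f:G\to G'$ be a morphism of compact groups and factor it through its set-theoretic image as $G\twoheadrightarrow f(G)\hookrightarrow G'$. The subgroup $f(G)$ is \emph{closed}, being the continuous image of a compact group inside a Hausdorff one, so this is a bona fide factorization through a closed (hence embedded) subgroup. First I would invoke \cite[Theorem 2.1]{chi_multipush} to conclude that the embedding $f(G)\hookrightarrow G'$ is an equalizer, hence a regular subobject. The only remaining point is minimality: if $f=m\circ g$ factors through a subobject $m:H\hookrightarrow G'$ with $g:G\to H$, then $f(G)=m(g(G))\subseteq m(H)$, so the embedding $f(G)\hookrightarrow G'$ itself factors through $m$. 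Thus $f(G)\hookrightarrow G'$ is the smallest subobject — and a fortiori the smallest \emph{regular} subobject — through which $f$ factors, which is exactly its dominion in the sense of \Cref{def:dominion}.

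For the second assertion, write $\cO$ for the contravariant functor assigning to a compact group its Hopf $*$-algebra of representative functions, so that every morphism $\varphi:A\to B$ in $\cqg_c$ equals $\cO(f)$ for a unique compact-group morphism $f:G\to G'$ with $\cO(G')=A$ and $\cO(G)=B$. As an anti-equivalence, $\cO$ carries the dominion of $f$ to the codominion of $\varphi$ and the factorization $G\twoheadrightarrow f(G)\hookrightarrow G'$ to a factorization $A\twoheadrightarrow \cO(f(G))\hookrightarrow B$. It then suffices to identify $\cO(f(G))$ with the image $\varphi(A)$: a representative function on $G$ lies in $\varphi(A)$ exactly when it is pulled back along $f$ from $G'$, i.e.\ exactly when it factors through $f(G)$, so $\cO(f(G))\cong\varphi(A)$ and the codominion is the corestriction $A\twoheadrightarrow\varphi(A)$ of $\varphi$ onto its image, as claimed.

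I expect the one point needing care to be the bookkeeping for the anti-equivalence — verifying that it transports `smallest regular subobject' to `smallest regular quotient' and the image of $f$ to the image of $\varphi$. The genuine content sits in the compact-group half, and even there \cite[Theorem 2.1]{chi_multipush} does the work, reducing everything to the elementary observation that the image embedding is already regular; the passage to $\cqg_c$ is then purely formal duality.
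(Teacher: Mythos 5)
Your proposal is correct and matches the paper's own (implicit) argument: the corollary is stated there as an immediate consequence of \cite[Theorem 2.1]{chi_multipush} (embeddings of compact groups are equalizers), exactly the regularity input you use, with the image factorization supplying minimality and the Tannakian anti-equivalence with $\cqg_c$ handling the dual statement. Your fleshing-out of the bookkeeping (closedness of $f(G)$, identification $\cO(f(G))\cong\varphi(A)$ via surjectivity of restriction of representative functions) is accurate and consistent with what the paper leaves tacit.
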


This also implies that epimorphisms of compact groups are onto (\cite[Theorem]{pogunt_epi-cpct}, \cite[Proposition 9]{reid-epi}) or equivalently, monomorphisms in $\cqg_c$ are embeddings. This latter result transports over to $\cqg$: the monomorphisms in that category are precisely the injective morphisms \cite[Proposition 6.1]{chi_cat-cqg}.

In view of all of the above, it seems reasonable to ask whether one can strengthen \cite[Proposition 6.1]{chi_cat-cqg} in the same direction, by showing that codominions (rather than just monomorphisms) behave ``as expected'':

\begin{theorem}\label{th:cqgcodom}
  A surjection in $\cqg$ is a coequalizer in that category.
\end{theorem}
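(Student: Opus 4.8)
The plan is to exhibit a surjection $\pi\colon H\to H'$ in $\cqg$ as the coequalizer of its own kernel pair, reusing the codominion machinery already available for Hopf algebras and then checking that the passage to the $*$-structure is automatic. First I would record that the codomain $H'$ is cosemisimple (\Cref{re:cqg-coss}), so that \Cref{cor:ontocoss} applies to the underlying Hopf-algebra surjection: $\pi$ is a codominion in $\HAlg$, which by \Cref{def:dominion} together with \cite[14J (g)]{ahs} means precisely that $\pi$ is the coequalizer, computed in $\HAlg$, of the two projections $p_1,p_2\colon H\times_{H'}H\rightrightarrows H$ out of the self-pullback. By \Cref{th:same-co-dom} it is immaterial whether one reads this in $\HAlg$ or in ${}_{bi}\HAlg$, the natural home of CQG algebras (whose antipodes are bijective).

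Next I would check that this kernel pair already lives in $\cqg$. The self-pullback $P:=H\times_{H'}H$ is a Hopf $*$-subalgebra of $H\otimes H$ — its closedness under $*$ being immediate from $\pi$ preserving $*$ — and $H\otimes H$ is routinely a CQG algebra. Since cosemisimplicity passes to subcoalgebras, and the unitarizability of a $P$-comodule can be read off from its structure as an $(H\otimes H)$-comodule (the defining identity involving only $S$ and $*$, which agree on $P$ and on $H\otimes H$), the object $P$ is again a CQG algebra, with $p_1,p_2$ being $*$-morphisms as restrictions of $H\otimes H\to H$. Thus $p_1,p_2\colon P\rightrightarrows H$ is a diagram in $\cqg$ that $\pi$ coequalizes.

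It then remains to verify the universal property inside $\cqg$. Given any $g\colon H\to T$ in $\cqg$ with $gp_1=gp_2$, forgetting the $*$-structures turns this into a coequalizing cone in $\HAlg$, so the codominion description above furnishes a unique Hopf-algebra map $\bar g\colon H'\to T$ with $\bar g\pi=g$. The only thing to add is that $\bar g$ is automatically $*$-preserving, and this is the one genuinely $*$-flavored step, albeit a soft one: because $\pi$ is surjective, every element of $H'$ has the form $\pi(h)$, and then $\bar g(\pi(h)^*)=\bar g(\pi(h^*))=g(h^*)=g(h)^*=\bar g(\pi(h))^*$ forces $\bar g$ to commute with $*$. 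Hence $\bar g$ is a $\cqg$-morphism, unique since $\pi$ is epic, and $\pi=\operatorname{coeq}_{\cqg}(p_1,p_2)$.

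The main obstacle I would flag is not a hard computation but the bookkeeping in the middle step: one must be certain that the kernel pair supplied by the $\HAlg$-codominion description genuinely consists of $\cqg$-morphisms (equivalently, that $H\times_{H'}H$ is a CQG algebra), precisely because $\cqg$ is only a \emph{non-full} subcategory of $\HAlg$ and the factorization $\bar g$ is first manufactured in $\HAlg$ before being recognized as a $\cqg$-arrow. Once the self-pullback has been placed inside $\cqg$, the surjectivity of $\pi$ makes the $*$-compatibility of $\bar g$ free, and no analogue of the reflexive-coequalizer preservation of \Cref{le:reflok} is needed.
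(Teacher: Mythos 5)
The gap sits exactly at the step you yourself flag as the crucial ``bookkeeping'': your identification of the self-pullback $P=H\times_{H'}H$ (computed in $\HAlg$, equivalently in $\Coalg$, the forgetful functor being a right adjoint as in the proof of \Cref{pr:commnodiff}) with a $*$-closed Hopf subalgebra of $H\otimes H$ is false in general. The coalgebra $H\otimes H$, with the two marginal maps $\id\otimes\varepsilon$ and $\varepsilon\otimes\id$, is the categorical product of $H$ with itself only in the \emph{cocommutative} case (\cite[Theorem 6.4.5]{swe}, recalled in the proof of \Cref{pr:commnodiff}) -- and CQG algebras are typically not cocommutative. The subspace of $H\otimes H$ cut out by ``agreement over $H'$'' is the cotensor product $H\square_{H'}H$, which is neither a subcoalgebra of $H\otimes H$ nor the categorical pullback, and whose two maps to $H$ are merely linear rather than coalgebra morphisms; the true pullback is a much less explicit object (a subcoalgebra of the $\Coalg$-product, built via cofree coalgebras, cf.\ \cite{agore_lim}). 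Consequently your middle step proves neither that $P$ is a CQG algebra (the appeals to cosemisimplicity of subcoalgebras and to unitarizability -- which is strictly more than cosemisimplicity -- rest on the false premise $P\le H\otimes H$) nor that $p_1,p_2$ are $\cqg$-morphisms. There is in fact no reason for the $\HAlg$-pullback of CQG algebras to be CQG: limits in $\cqg$ are obtained in \cite{chi_cat-cqg} by passing to a largest CQG subobject precisely because the naive limit can fail to be one. Since your entire verification of the universal property presupposes a coequalizing pair living in $\cqg$, the argument collapses at this pivot.

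The surrounding skeleton is otherwise sound and close to the paper's: the reduction to cosemisimplicity of $H'$ via \Cref{re:cqg-coss} and \Cref{cor:ontocoss} is the same, and your observation that any Hopf-algebra factorization $\bar g$ through the surjective $*$-morphism $\pi$ is automatically $*$-preserving is correct. The paper, however, never needs a concrete model of the kernel pair: it works with the $\cqg$-pullback (which exists since $\cqg$ is locally presentable \cite[Theorem 3.1]{chi_cat-cqg}), factors its pair of projections through the cotensor pair \Cref{eq:csqc} via $x\mapsto p_1(x_1)\otimes p_2(x_2)$ as in the proof of the equivalence \Cref{item:prod} $\iff$ \Cref{item:cotens} of \Cref{th:coalg-dom}, and observes that the \emph{vector-space} coequalizer of the cotensor pair is already a CQG quotient of $H$; the codominion in $\cqg$ -- the coequalizer of the kernel pair by \cite[14J (g)]{ahs} -- is therefore computable in $\Coalg$, where \Cref{cor:ontocoss} identifies it with $H\to H'$ itself. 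To repair your proof you would have to either adopt this cotensor-level computation with the $\cqg$-kernel pair, or produce a genuine argument (not routed through $H\otimes H$) that some coequalizing pair for $\pi$ lies in $\cqg$.
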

\begin{proof}
  Recall \cite[Theorem 3.1]{chi_cat-cqg} that $\cqg$ is locally presentable, so much of the discussion in \Cref{se:prel} applies: the category is complete and cocomplete, etc. In particular, we can freely refer to pullbacks therein.
  
  Let $f:H\to H'$ be a surjective morphism of CQG algebras. As in the proof of the equivalence \Cref{item:prod} $\iff$ \Cref{item:cotens} in \Cref{th:coalg-dom}, the vector-space coequalizer of the two maps
  \begin{equation*}
    \begin{tikzpicture}[auto,baseline=(current  bounding  box.center)]
      \path[anchor=base] 
      (0,0) node (l) {$H\square_{H'}H$}
      +(2,0) node (r) {$H$}
      ;
      \draw[->] (l) to[bend left=6] node[pos=.5,auto] {$\scriptstyle $} (r);
      \draw[->] (l) to[bend right=6] node[pos=.5,auto] {$\scriptstyle $} (r);
    \end{tikzpicture}
  \end{equation*}
  is already a CQG algebra, and hence also the coequalizer of the two maps \Cref{eq:h'h'h'} from the pullback in $\cqg$. The latter is the codominion in $\cqg$ \cite[14J (g)]{ahs}, so that codominion is also computable in $\Coalg$ (or $\Bialg$, etc.). But $H'$ is cosemisimple, so the codominion is $H\to H'$ itself by \Cref{cor:ontocoss}.
\end{proof}

The non-commutative analogue of \Cref{cor:cpct} is now a consequence (or rather rephrasing):

\begin{corollary}\label{cor:cqg}
  the codominion of a morphism in the category $\cqg$ of CQG algebras is the surjection of its domain onto its image.  \qedhere
\end{corollary}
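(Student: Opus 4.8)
The plan is to read off the codominion from the (regular epi)--mono\-morphism factorization of $f$, letting \Cref{th:cqgcodom} supply the regularity of the epi part. Writing $f\colon H\to H'$, I would first factor it through its image as $H\xrightarrow{\,e\,}f(H)\xrightarrow{\,m\,}H'$, where $e$ is the (surjective) corestriction and $m$ the inclusion. Before anything else I would check that this factorization genuinely lives in $\cqg$: the image $f(H)$ of a morphism of Hopf $*$-algebras is a Hopf $*$-subalgebra of $H'$, and it inherits the CQG property, since every finite-dimensional $f(H)$-comodule is in particular (via $m$) a finite-dimensional $H'$-comodule and hence unitarizable, with the very same inner product witnessing unitarizability over $f(H)$ because the comodule structure map, the antipode and the involution all restrict to $f(H)$.

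With the factorization in hand, the argument is short. The map $e$ is a surjection in $\cqg$, so by \Cref{th:cqgcodom} it is a coequalizer, i.e. a regular quotient object, and $f=m\circ e$ factors through it. It remains to verify that $e$ is the smallest regular quotient through which $f$ factors, which is exactly the defining property of the codominion (\Cref{def:dominion} \Cref{item:codom}). Here I would use that $m$ is monic: by \cite[Proposition 6.1]{chi_cat-cqg} the monomorphisms of $\cqg$ are precisely the injections, and $m$ is an inclusion. Given any regular quotient $p=\operatorname{coeq}(u,v)$ for a pair $u,v\colon R\to H$ through which $f$ factors, say $f=g\circ p$, one has $m e u = f u = g p u = g p v = f v = m e v$; since $m$ is monic this forces $e u = e v$, so $e$ factors through the coequalizer $p$. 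Hence $e$ is below every regular quotient of $H$ through which $f$ factors, exhibiting $e\colon H\twoheadrightarrow f(H)$ as the codominion of $f$.

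I do not expect a serious obstacle: the corollary is essentially a repackaging of \Cref{th:cqgcodom}, and the only genuine checks are bookkeeping --- that $f(H)$ is again a CQG algebra (so that the factorization is one of $\cqg$) and that $m$ is a $\cqg$-monomorphism. Of these, confirming that unitarizability passes to the Hopf $*$-subalgebra $f(H)$ is the most delicate point, though it is immediate once one observes that an $f(H)$-comodule structure is nothing but an $H'$-comodule structure whose image lies in $f(H)$, so that the $H'$-unitarization restricts. Everything else is formal, relying only on the existence of codominions in the locally presentable category $\cqg$.
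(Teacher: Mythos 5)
Your proof is correct and takes the same route the paper intends: \Cref{cor:cqg} is stated there without argument, as an immediate rephrasing of \Cref{th:cqgcodom}, and your writeup is exactly the routine unpacking of that claim --- factor $f$ as $H\twoheadrightarrow f(H)\hookrightarrow H'$, note the surjection is a regular quotient by \Cref{th:cqgcodom}, and deduce minimality among regular quotients through which $f$ factors from the monicity of the inclusion. Your verification that $f(H)$ is again a CQG algebra (a Hopf $*$-subalgebra whose finite-dimensional comodules unitarize via corestriction along the inclusion) is the right bookkeeping and fills in the only point the paper leaves implicit.
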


\begin{remark}\label{re:cpctfollows}
  In fact, \Cref{cor:cpct} {\it follows} from \Cref{cor:cqg} (thus giving an alternative proof of \cite[Theorem 2.1]{chi_multipush}, on which we need not rely): the inclusion functor $\cqg_c\to \cqg$
  \begin{itemize}
  \item is a right adjoint so on the one hand it preserves pullbacks and hence diagrams of the form \Cref{eq:h'h'h'};
  \item and also preserves split coequalizers, as in the proof of \Cref{le:reflok} \Cref{item:ordcoeq}.
  \end{itemize}
  This means in particular that said inclusion functor preserves codominions.
\end{remark}

\section{Asides on scalar extension and limits of coalgebras}\label{se:limtens}

In light of the proof of \Cref{pr:fieldextcodom}, one might wonder whether it could have been phrased in terms of the other description of a coalgebra codominion: as a coequalizer of the two {\it coalgebra} (rather than vector space) morphisms on the left-hand side of \Cref{eq:cxc}, with the symbol $\times_D$ denoting the pullback in $\Coalg$.

The algebra analogue, whereby the dominion of $f:A\to B$ is the equalizer of the two morphisms
\begin{equation*}
  \begin{tikzpicture}[auto,baseline=(current  bounding  box.center)]
    \path[anchor=base] 
    (0,0) node (l) {$B$}
    +(3,0) node (r) {$B\coprod_AB$}
    ;
    \draw[->] (l) to[bend left=6] node[pos=.5,auto] {$\scriptstyle $} (r);
    \draw[->] (l) to[bend right=6] node[pos=.5,auto] {$\scriptstyle $} (r);
  \end{tikzpicture}
\end{equation*}
(pushout in $\Alg$) is obvious enough:
\begin{equation*}
  \Alg_{\Bbbk}\xrightarrow{\Bbbk'\otimes_{\Bbbk}-}\Alg_{\Bbbk'}
\end{equation*}
is a left adjoint so preserves pushouts, as it does (co)equalizers. Note, though an asymmetry in the behavior of $\Bbbk'\otimes_{\Bbbk}-$ for algebras vs. coalgebras: in both cases the functor $\Bbbk'\otimes_{\Bbbk}-$ is a {\it left} adjoint, so the customary dualization does not obtain. Indeed, as observed in \Cref{se:prel}, it is enough to argue that the functor is (in both cases) cocontinuous:
\begin{itemize}
\item For coalgebras this is obvious, since the colimits in $\Coalg$ are just those in $\vc$ (\Cref{re:epico-monoalg}).
\item While for algebras one can argue directly that coequalizers and coproducts are preserved, e.g. using the convenient description of coproducts \cite[Corollary 8.1]{bg-diamond} as direct sums of tensor products.
\end{itemize}    
That nevertheless the variant of the proof of \Cref{pr:fieldextcodom} alluded to above would have gone through follows from \Cref{th:fieldextfinlim}: even though it is a left adjoint and not a right one, it does preserve the $\Coalg$-equalizer of 
\begin{equation*}
  \begin{tikzpicture}[auto,baseline=(current  bounding  box.center)]
    \path[anchor=base] 
    (0,0) node (l) {$C\times_DC$}
    +(3,0) node (r) {$C$}
    ;
    \draw[->] (l) to[bend left=6] node[pos=.5,auto] {$\scriptstyle $} (r);
    \draw[->] (l) to[bend right=6] node[pos=.5,auto] {$\scriptstyle $} (r);
  \end{tikzpicture}
\end{equation*}
for a coalgebra morphism $C\to D$.

\begin{theorem}\label{th:fieldextfinlim}
  Let $\Bbbk\le \Bbbk'$ be a field extension, and consider the left adjoint $(-)_{\Bbbk'}:=\Bbbk'\otimes_{\Bbbk}-$ between the two versions (of $\Bbbk$- and $\Bbbk'$-linear objects respectively) of any of the categories
  \begin{equation*}
    \Alg_{\bullet},\quad
    \Coalg_{\bullet},\quad
    \Bialg_{\bullet},\quad
    {}_{\square}\HAlg_{\bullet}.
  \end{equation*}
  \begin{enumerate}[(1)]

  \item\label{item:infprodalg} For $\Alg_{\bullet}$, $(-)_{\Bbbk'}$ preserves  infinite products (and hence is also right adjoint) precisely when $\Bbbk\le \Bbbk'$ is a finite extension.

    Otherwise, the functor does not preserve {\it any} infinite product of non-zero algebras (and hence is not right adjoint).

  \item\label{item:coalg} For all other categories $(-)_{\Bbbk'}$ preserves equalizers of arbitrary families and arbitrary products, and hence is a right adjoint.
    
  \end{enumerate}
\end{theorem}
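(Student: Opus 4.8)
The plan splits along the two parts of the statement.

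For \Cref{item:infprodalg}, products in $\Alg_{\bullet}$ have underlying vector space the product of the underlying spaces (the forgetful functor to $\vc$ being a right adjoint, \Cref{re:epico-monoalg}), so the question is purely linear: does $\Bbbk'\otimes_{\Bbbk}-$ preserve infinite products of vector spaces? If $[\Bbbk':\Bbbk]=n<\infty$, a basis identifies $\Bbbk'\otimes_{\Bbbk}-$ with $(-)^{\oplus n}$, which commutes with all products; together with the exactness of $\Bbbk'\otimes_{\Bbbk}-$ (hence preservation of equalizers) this makes it continuous, and being cocontinuous it is a right adjoint by \cite[Theorem 1.66]{ar}. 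If $\dim_{\Bbbk}\Bbbk'=\infty$, I would observe that every element of $\Bbbk'\otimes_{\Bbbk}\prod_iV_i$ is a finite sum $\sum_k\lambda_k\otimes(v^{(k)}_i)_i$, so its image in $\prod_i(\Bbbk'\otimes_{\Bbbk}V_i)$ has all entries in the single finite-dimensional subspace $\mathrm{span}_{\Bbbk}(\lambda_k)\subseteq\Bbbk'$; for infinite $I$ and nonzero $V_i$ this rules out surjectivity. To propagate the failure to \emph{every} infinite product of nonzero algebras, I would split each unit as a vector-space summand $A_i\cong\Bbbk\oplus A_i'$: products commute with this finite biproduct, so the comparison map for $(A_i)$ has the (non-surjective) comparison map for the constant family $(\Bbbk)_i$ as a direct summand, hence is not an isomorphism; and a functor not preserving products cannot be a right adjoint.

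For \Cref{item:coalg} I would first reduce the bialgebra and Hopf-algebra cases to $\Coalg$ and $\Coalg_{cc}$. The forgetful functors $U$ from $\Bialg_{\bullet}$ or ${}_{\square}\HAlg_{\bullet}$ to $\Coalg_{\bullet}$ are right adjoints (\cite[diagram (9)]{porst_formal-2}, \cite[\S 2.2]{porst_univ}), hence preserve limits; they are conservative (faithful, and a bialgebra or Hopf morphism invertible as a coalgebra map has an automatically multiplicative inverse), and they commute with $(-)_{\Bbbk'}$ since scalar extension is computed on underlying spaces. Applying $U$ to the canonical comparison morphism and using these three facts identifies it with the comparison morphism for the underlying $\Coalg_{\bullet}$-limit, and conservativity finishes the reduction. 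It then remains to show $(-)_{\Bbbk'}$ preserves arbitrary products and arbitrary equalizers in $\Coalg$ and $\Coalg_{cc}$; since these generate all small limits the functor becomes continuous, and continuity plus cocontinuity again yield a right adjoint via \cite[Theorem 1.66]{ar}.

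Equalizers are the tractable half. The equalizer in $\Coalg$ of a family $f_j:C\to D$ is the largest subcoalgebra of $C$ contained in the subspace $K=\bigcap_j\ker(f_j-f_{j_0})$. By the fundamental theorem of coalgebras \cite[Theorem 5.1.1]{mon}, $C$ is the directed union of its finite-dimensional subcoalgebras $C_{\alpha}$, and this largest subcoalgebra is the corresponding directed union of the (finite-dimensional) largest subcoalgebras inside $K\cap C_{\alpha}$, each a \emph{finite} intersection of preimages of tensor powers of $K\cap C_{\alpha}$ under iterated comultiplications (stabilizing because $\dim C_{\alpha}<\infty$). As in the proof of \Cref{le:fieldext}, the faithfully flat $\Bbbk'\otimes_{\Bbbk}-$ commutes with directed unions, finite intersections, preimages and tensor products, so it transports this description verbatim and preserves the equalizer.

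The genuine obstacle is infinite products in $\Coalg$ (and $\Coalg_{cc}$), which—unlike colimits—are \emph{not} computed in $\vc$, so no underlying-space argument is available and the adjunction $(-)_{\Bbbk'}$ enjoys does not help (restriction of scalars does not exist for coalgebras, and testing against finite-dimensional objects circularly presupposes the conclusion). My plan is to localize and dualize: by \cite[Theorem 5.1.1]{mon} it suffices to control the finite-dimensional subcoalgebras of $\prod_i^{\Coalg}C_i$, and under the duality between finite-dimensional coalgebras and finite-dimensional algebras such a subcoalgebra corresponds to a finite-dimensional quotient algebra of the coproduct (in $\Alg$) of the duals of the $C_i$. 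This presents the product through finite-dimensional data, one coproduct of algebras, and a directed union, and I would then verify that $\Bbbk'\otimes_{\Bbbk}-$ commutes with each ingredient: with finite-dimensional linear duality (since $(V_{\Bbbk'})^{*}\cong(V^{*})_{\Bbbk'}$ for finite-dimensional $V$), with coproducts of algebras (it is a left adjoint on $\Alg$), and with the relevant finite-dimensional quotients and directed unions (by faithful flatness, as in \Cref{le:fieldext}); in the cocommutative case the finite products entering the construction are tensor products \cite[Theorem 6.4.5]{swe}, with which scalar extension visibly commutes. Showing that this finite-dimensional/duality bookkeeping genuinely reproduces $\prod_i^{\Coalg_{\Bbbk'}}(C_i)_{\Bbbk'}$ as the scalar extension of $\prod_i^{\Coalg}C_i$—and that faithful flatness survives every step—is where I expect essentially all of the difficulty to lie.
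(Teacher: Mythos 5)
Most of your plan coincides with the paper's proof: part \Cref{item:infprodalg} via the observation that $\Alg_{\bullet}$-limits are computed in $\vc$ together with the finite-sum obstruction to surjectivity of $(\prod_i V_i)_{\Bbbk'}\to\prod_i V_{i,\Bbbk'}$ is exactly the paper's argument (with the linear algebra spelled out); the reduction of $\Bialg_{\bullet}$ and ${}_{\square}\HAlg_{\bullet}$ to $\Coalg_{\bullet}$ via conservative, limit-preserving forgetful functors commuting with scalar extension is the paper's; and your equalizer argument is \Cref{le:lgsubcoalg} in mild disguise (the paper does it in one stroke via $C_V=\bigcap_{n\ge 0}\Delta_n^{-1}V^{\otimes(n+1)}$ and preservation of arbitrary intersections, with no need for your stabilization detour through finite-dimensional subcoalgebras).

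The genuine gap is in the step you yourself flag as the crux: infinite products in $\Coalg$. Your proposed dualization rests on a false correspondence. For a finite-dimensional coalgebra $D$ one has $\Alg(C^*,D^*)\cong\Coalg(D,(C^*)^{\circ})$, where $(-)^{\circ}$ is the finite dual, \emph{not} $\Coalg(D,C)$: an algebra map $C^*\to D^*$ transposes to a coalgebra map $D\to(C^*)^{\circ}$, and the canonical embedding $C\hookrightarrow(C^*)^{\circ}$ is proper in general. (Concretely, take $C=\Bbbk 1\oplus V$ with $V$ an infinite-dimensional space of primitives; then $C^*=\Bbbk\oplus V^*$ with $V^*$ squaring to zero, every finite-codimension subspace of $V^*$ is an ideal, and one computes $(C^*)^{\circ}=\Bbbk\oplus V^{**}\supsetneq C$. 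So the failure occurs already for a single factor, over any field.) Consequently the finite-dimensional quotient algebras of the $\Alg$-coproduct $\coprod_i C_i^*$ classify finite-dimensional subcoalgebra data for $\prod_i^{\Coalg}(C_i^*)^{\circ}$, which strictly contains $\prod_i^{\Coalg}C_i$; your bookkeeping would reconstruct the wrong coalgebra. Repairing this would require working with topological (pseudocompact) duals and completed coproducts so as to retain only \emph{continuous} quotients, but then the remaining verification collides with the fact that $\Bbbk'\otimes_{\Bbbk}-$ does not commute with infinite-dimensional duals ($(V^*)_{\Bbbk'}\subsetneq(V_{\Bbbk'})^*$) --- precisely the phenomenon driving part \Cref{item:infprodalg} --- and the $C_i^*$ here are infinite-dimensional. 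The paper avoids any explicit model of the product: \Cref{le:comodstruct} shows that every $C_{\Bbbk'}$-comodule structure on $V_{\Bbbk'}$ ($V$ finite-dimensional over $\Bbbk$) descends uniquely to a $C$-comodule structure on $V$; since finite-dimensional comodule structures over a product coalgebra amount (via coefficient coalgebras and the universal property) to families of structures over the factors, the canonical map $(\prod C_i)_{\Bbbk'}\to\prod C_{i,\Bbbk'}$ induces a bijection on finite-dimensional comodules and on their morphism spaces, and Tannakian reconstruction \cite[Theorem 2.1.12 and Lemma 2.2.1]{schau_tann} then forces it to be an isomorphism; the cocommutative case follows by writing $\prod^{\Coalg_{cc}}C_i$ as a cofiltered $\Coalg$-limit of finite tensor products \cite[Theorem 6.4.5]{swe}, which is the correct form of your closing remark about tensor products.
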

\begin{proof}
  As noted repeatedly (in various formulations; e.g. \cite[diagram (9)]{porst_formal-2} and \cite[\S 2.2]{porst_univ}), the forgetful functors
  \begin{equation*}
    \begin{aligned}
      \Bialg_{\bullet},\ {}_{\square}\HAlg_{\bullet}&\xrightarrow{\quad} \Alg_{\bullet}\\
      \Bialg_{\bullet},\ {}_{\square}\HAlg_{\bullet}&\xrightarrow{\quad} \Coalg_{\bullet}\\
    \end{aligned}    
  \end{equation*}
  are right and left adjoints respectively (with the `$\bullet$' symbols understood to match: for algebras, if `c' appears on the left it does so on the right, and similarly for coalgebras and `cc'). It is thus enough to treat $\Alg_{\bullet}$ (algebras, commutative or plain) and $\Coalg_{\bullet}$ (coalgebras, cocommutative or plain).

  It is mentioned in \Cref{se:prel} that functions defined on any of the categories are right adjoint as soon as they are continuous and preserve certain filtered colimits, so in particular, for a functor that is already know to be {\it left} adjoint, continuity is equivalent to being a right adjoint. Additionally, continuity is also equivalent to preservation of products and pair equalizers \cite[Proposition 13.4]{ahs}, hence the claims about right adjointness.

  We now address the two claims.

  \begin{enumerate}[(1)]

  \item In $\Alg_{\bullet}$ the limits are just the ordinary ones form $\vc$, the forgetful functor $\Alg_{\bullet}\to \vc$ being right adjoint. Part \Cref{item:infprodalg}, then, follows immediately from the remark that the canonical map
    \begin{equation}\label{eq:k'prod}
      \left(\prod_{i\in i}V_i\right)_{\Bbbk'}
      \xrightarrow{\quad}
      \prod_{i\in I}V_{i,\Bbbk'}
    \end{equation}
    is
    \begin{itemize}
    \item an isomorphism if $\Bbbk\le \Bbbk'$ is a finite extension;
    \item and a proper embedding if the field extension is infinite and infinitely many $V_i$ are non-zero.
    \end{itemize}

  \item We will discuss the various types of limits separately. 

    
    \begin{enumerate}[(a)]
    \item\label{item:eq} {\bf (Equalizers)} In both $\Coalg$ and $\Coalg_{cc}$ these are computed identically: given a family of morphisms $f_i:C\to D$, their equalizer is the largest subcoalgebra of $C$ contained in the vector-space equalizer
      \begin{equation}\label{eq:veceq}
        \mathrm{Eq}_{\vc}(f_i):=\{c\in C\ |\ f_i(c)\in D\text{ are all equal}\}
      \end{equation}
      (see for instance \cite[first paragraph of the proof of Theorem 1.1]{agore_lim} for pairs of morphisms; the argument works generally). That $\Bbbk'\otimes_{\Bbbk}-$ preserves vector-space equalizers \Cref{eq:veceq} is straightforward linear algebra, so the conclusion follows from \Cref{le:lgsubcoalg} below.
      
    \item\label{item:prodcoalg} {\bf (Products in $\Coalg$)} Let $C_i$, $i\in I$ be a family of $\Bbbk$-coalgebras, and consider the canonical morphism
      \begin{equation}\label{eq:canprodmap}
        \left(\prod C_i\right)_{\Bbbk'}
        \xrightarrow{\quad\cat{can}\quad}
        \prod C_{i,\Bbbk'},
      \end{equation}
      where the products are taken in the categories of $\Bbbk$- and $\Bbbk'$-coalgebras respectively.

      Consider a finite-dimensional $\Bbbk$-vector space $V$. By \Cref{le:comodstruct}, the morphism \Cref{eq:canprodmap} induces a bijection between the comodule structures on $V_{\Bbbk'}$ over the two sides of that morphism. The same result also implies that for finite-dimensional $\Bbbk$-spaces $V$ and $W$ equipped with such comodule structures we have
      \begin{equation*}
        \cM^{C_{\Bbbk'}}(V_{\Bbbk'},W_{\Bbbk'})
        \cong
        \cM^C(V,W)_{\Bbbk'},\quad
        C:=\prod C_i.
      \end{equation*}
      Given that every finite-dimensional $\Bbbk'$-vector space is of the form $V_{\Bbbk'}$ for some $V\in \vc_{\Bbbk,f}$, the {\it Tannakian reconstruction} of the coalgebras \Cref{eq:canprodmap} from their respective finite-dimensional comodules \cite[Theorem 2.1.12 and Lemma 2.2.1]{schau_tann} now makes it clear that \Cref{eq:canprodmap} is an isomorphism.

    \item {\bf (Products in $\Coalg_{cc}$)} Consider an arbitrary family of cocommutative coalgebras $C_i\in \Coalg_{cc}$, $i\in I$. For any {\it finite} subfamily thereof the product in $\Coalg$ is nothing but the tensor product \cite[Theorem 6.4.5]{swe}, so in general we have
      \begin{equation}\label{eq:cocomprod}
        \prod^{\Coalg_{cc}}C_i\cong \varprojlim^{\Coalg}_{F} \bigotimes_{i\in F}C_i,
      \end{equation}
      where
      \begin{itemize}
      \item $F$ ranges over the finite subsets of the full index set $I$, ordered by inclusion
      \item For $F\subseteq F'$ the connecting morphism
        \begin{equation*}
          \bigotimes_{i\in F'}C_i
          \xrightarrow{\quad}
          \bigotimes_{i\in F}C_i
        \end{equation*}
        acts as the identity for $i\in F$ and as the counit for $i\in F'\setminus F$.
      \end{itemize}
      As we already know from points \Cref{item:eq} and \Cref{item:prodcoalg} above that $(-)_{\Bbbk'}$ preserves arbitrary limits in $\Coalg$, and since it of course preserves tensor products, it will also preserve limits of the form \Cref{eq:cocomprod}.

    \end{enumerate}
    

  \end{enumerate}

  This concludes the proof.
\end{proof}

\begin{lemma}\label{le:lgsubcoalg}
  For a linear subspace $V\le C$, denote by $C_V\le V$ the largest subcoalgebra of $C$ contained in $V$.
  
  For any field extension $\Bbbk\le \Bbbk'$ and linear subspace $V\le C$ of a $\Bbbk$-coalgebra the canonical inclusion
  \begin{equation*}
    \Bbbk'\otimes_{\Bbbk} C_V\to (\Bbbk'\otimes_{\Bbbk}C)_{\Bbbk'\otimes_{\Bbbk}V}
  \end{equation*}
  is an equality. 
\end{lemma}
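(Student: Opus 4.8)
The plan is to describe $C_V$ by an intersection formula that visibly commutes with flat base change. First I would dispose of the easy containment: since $C_V$ is a subcoalgebra of $C$ contained in $V$, its extension $\Bbbk'\otimes_\Bbbk C_V$ is a $\Bbbk'$-subcoalgebra of $\Bbbk'\otimes_\Bbbk C$ (base change preserves coalgebras) contained in $\Bbbk'\otimes_\Bbbk V$ (flatness turns $V\le C$ into $\Bbbk'\otimes_\Bbbk V\le \Bbbk'\otimes_\Bbbk C$). Hence the canonical arrow is an injection into $(\Bbbk'\otimes_\Bbbk C)_{\Bbbk'\otimes_\Bbbk V}$, and only surjectivity remains.

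The heart of the argument is the identity
\[
  C_V=\bigcap_{n\ge 1}\bigl(\Delta^{(n)}\bigr)^{-1}\bigl(V^{\otimes n}\bigr),
\]
where $\Delta^{(n)}\colon C\to C^{\otimes n}$ is the iterated comultiplication ($\Delta^{(1)}=\id$, $\Delta^{(2)}=\Delta$, and so on). Writing $U$ for the right-hand side, the containment $C_V\subseteq U$ is immediate: for $c\in C_V$ the subcoalgebra generated by $c$ lies in $V$, so $\Delta^{(n)}(c)\in V^{\otimes n}$ for all $n$. The reverse containment is the step I expect to be the main obstacle, and it amounts to checking that $U$ (which clearly sits inside $V=(\Delta^{(1)})^{-1}(V)$) is itself a subcoalgebra. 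Given $c\in U$, I would write $\Delta(c)=\sum_i a_i\otimes b_i$ with both $\{a_i\}$ and $\{b_i\}$ linearly independent; applying $\id^{\otimes m}\otimes\beta_j$ to $\Delta^{(m+1)}(c)=\sum_i\Delta^{(m)}(a_i)\otimes b_i\in V^{\otimes(m+1)}$ for dual functionals $\beta_j$ with $\beta_j(b_i)=\delta_{ij}$ forces $\Delta^{(m)}(a_j)\in V^{\otimes m}$ for every $m$, i.e. $a_j\in U$; symmetrically $b_j\in U$. Thus $\Delta(c)\in U\otimes U$, so $U$ is a subcoalgebra contained in $V$ and therefore $U\subseteq C_V$, giving the formula.

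With the formula in hand the lemma becomes a compatibility check for the exact, faithfully flat functor $\Bbbk'\otimes_\Bbbk-$. It suffices to observe that scalar extension commutes with (i) preimages under $\Bbbk$-linear maps (exactness, since $f^{-1}(S)=\ker(C\to (C^{\otimes n})/S)$), (ii) the formation of tensor powers of subspaces, so that $(V^{\otimes n})_{\Bbbk'}=(V_{\Bbbk'})^{\otimes n}$ inside $(C_{\Bbbk'})^{\otimes n}$, and (iii) arbitrary intersections of subspaces of a fixed space — this last point being where a field extension is used essentially: fixing a $\Bbbk$-basis of $\Bbbk'$ and comparing coordinates shows $\Bbbk'\otimes_\Bbbk\bigcap_i W_i=\bigcap_i(\Bbbk'\otimes_\Bbbk W_i)$. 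Since the iterated comultiplication of $\Bbbk'\otimes_\Bbbk C$ is $\id_{\Bbbk'}\otimes\Delta^{(n)}$, applying the formula on both sides and pushing $\Bbbk'\otimes_\Bbbk-$ through the intersection, the preimages, and the tensor powers yields $(\Bbbk'\otimes_\Bbbk C)_{\Bbbk'\otimes_\Bbbk V}=\Bbbk'\otimes_\Bbbk C_V$, which is the desired equality.
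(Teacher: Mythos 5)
Your proof is correct and takes essentially the same route as the paper: identify $C_V$ with the intersection $\bigcap_n\bigl(\Delta^{(n)}\bigr)^{-1}\bigl(V^{\otimes n}\bigr)$ of preimages under iterated comultiplications, then observe that $\Bbbk'\otimes_{\Bbbk}-$ commutes with preimages, tensor powers of subspaces, and arbitrary intersections (the latter via a $\Bbbk$-basis of $\Bbbk'$). The only difference is that you spell out, via the dual-functional argument, why the intersection is a subcoalgebra and hence equals $C_V$ --- a verification the paper leaves as an ``observe that'' --- which is a welcome addition rather than a deviation.
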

\begin{proof}
  In words, scalar extensions along field inclusions preserve the largest-subcoalgebra-contained-in-a-subspace construction.
  
  To see this, observe that
  \begin{equation*}
    C_V = \{c\in C\ |\ \Delta_n c\in V^{\otimes(n+1)}\le C^{\otimes(n+1)}\},
  \end{equation*}
  where
  \begin{equation*}
    C\ni c\xmapsto{\quad\Delta_n\quad}c_1\otimes\cdots\otimes c_{n+1}\in C^{\otimes(n+1)}
  \end{equation*}
  is the iterated comultiplication (so that $\Delta_1=\Delta_C$ and $\Delta_0=\id_C$). We thus have 
  \begin{equation*}
    C_V = \bigcap_{n\ge 0}\Delta_n^{-1}V^{\otimes(n+1)},
  \end{equation*}  
  and we conclude by noting that $\Bbbk'\otimes_{\Bbbk}-$ preserves intersections of arbitrary families of subspaces (e.g. by choosing a basis for $\Bbbk'$ over $\Bbbk$, etc.).
\end{proof}

\begin{lemma}\label{le:comodstruct}
  Let $\Bbbk\le \Bbbk'$ be a field extension, $C$ a $\Bbbk$-coalgebra, $V$ a $\Bbbk$-vector space, and denote by $(-)_{\Bbbk'}$ the functor $\Bbbk'\otimes_{\Bbbk}-$. 

  A right $C_{\Bbbk'}$-comodule structure on $V_{\Bbbk'}$ arises by scalar extension from a unique $C$-comodule structure on $V$.
\end{lemma}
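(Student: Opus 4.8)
The plan is to read a right $C_{\Bbbk'}$-comodule structure on $V_{\Bbbk'}$ as a $\Bbbk$-rational datum and then argue that, modulo uniqueness, the only real issue is whether such a structure descends. Concretely, under the canonical identification $V_{\Bbbk'}\otimes_{\Bbbk'}C_{\Bbbk'}\cong (V\otimes_{\Bbbk}C)_{\Bbbk'}$, a coaction $\rho\colon V_{\Bbbk'}\to V_{\Bbbk'}\otimes_{\Bbbk'}C_{\Bbbk'}$ is the same as a $\Bbbk'$-linear map $V_{\Bbbk'}\to (V\otimes_{\Bbbk}C)_{\Bbbk'}$, which by freeness of $V_{\Bbbk'}=\Bbbk'\otimes_{\Bbbk}V$ is determined by its restriction $\rho_0\colon V\to (V\otimes_{\Bbbk}C)_{\Bbbk'}$. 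The coaction descends precisely when $\rho_0$ takes values in the $\Bbbk$-form $V\otimes_{\Bbbk}C$, in which case $\rho_0$ is a genuine $C$-comodule structure on $V$ whose scalar extension is $\rho$. The uniqueness half is then immediate: $\Bbbk'\otimes_{\Bbbk}-$ is faithful on linear maps, so two $C$-comodule structures on $V$ extending to the same $\rho$ already agree over $\Bbbk$. Thus the entire content of the lemma is the \emph{existence}, i.e. the descent of $\rho_0$.

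To produce the descent I would first reduce to $V\in\vc_{\Bbbk,f}$: every comodule is the union of its finite-dimensional subcomodules \cite[Theorem 5.1.1]{mon}, and the constructions in play commute with the resulting filtered colimits. Fixing a $\Bbbk$-basis $(\lambda_\alpha)_\alpha$ of $\Bbbk'$ with $\lambda_0=1$, I would expand $\rho_0=\sum_\alpha \lambda_\alpha\,\rho^{(\alpha)}$ into $\Bbbk$-linear components $\rho^{(\alpha)}\colon V\to V\otimes_{\Bbbk}C$, only finitely many nonzero on any given vector. The counit axiom $(\mathrm{id}\otimes\varepsilon)\rho=\mathrm{id}$, restricted to $V$ and read off coefficientwise, forces $(\mathrm{id}\otimes\varepsilon)\rho^{(0)}=\mathrm{id}_V$ and $(\mathrm{id}\otimes\varepsilon)\rho^{(\alpha)}=0$ for $\alpha\neq 0$, while coassociativity, expanded against the multiplication table $\lambda_\alpha\lambda_\beta=\sum_\gamma m_{\alpha\beta}^{\gamma}\lambda_\gamma$ of $\Bbbk'$, yields a system of $\Bbbk$-linear identities linking the $\rho^{(\alpha)}$. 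The objective is to extract from this system that the only surviving component is $\rho^{(0)}$ and that $\rho^{(0)}$ is a coassociative, counital $C$-coaction.

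The step I expect to be the crux, and the genuinely delicate one, is exactly this separation: showing that the a priori merely $\Bbbk'$-linear coaction is $\Bbbk$-rational, i.e. that the higher components $\rho^{(\alpha)}$ ($\alpha\neq 0$) vanish. This is where real input beyond formal bookkeeping is required, since a coaction on a scalar extension need not be $\Bbbk$-rational for purely formal reasons. Equivalently, in the comatrix (Tannakian) description, where a coaction on an $n$-dimensional space is a coalgebra map from the matrix coalgebra $M^n_c$ into $C$, the assertion becomes that every $\Bbbk'$-coalgebra map $(M^n_c)_{\Bbbk'}\to C_{\Bbbk'}$ descends along $\Bbbk\le\Bbbk'$ — a descent problem for coalgebra morphisms. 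I would therefore concentrate the argument there, trying to deduce the rationality from the counit normalization together with coassociativity, and reducing if necessary to a finitely generated and then separable/Galois or purely inseparable extension so as to bring Galois-descent or Frobenius-type arguments to bear; this is also the only place where the extension $\Bbbk\le\Bbbk'$ genuinely enters, so it is precisely the point at which the hypotheses must be used.
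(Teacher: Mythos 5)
Up to the point where you stop, your setup is exactly the paper's proof: the paper likewise reads the coaction, through the extension--restriction adjunction, as a $\Bbbk$-linear map $V\to V\otimes C\otimes\Bbbk'$, expands it along a basis $(e_i)$ of $\Bbbk'/\Bbbk$ with $e_{i_0}=1$, and extracts from counitality precisely your two conditions ($(\id\otimes\varepsilon)\rho^{(0)}=\id_V$, and \Cref{eq:anticounit} for the other components). But where you (rightly) declare the vanishing of the components $\rho^{(\alpha)}$, $\alpha\ne 0$, to be the crux and defer it to a hoped-for Galois or inseparable-descent argument, the paper claims these components are \emph{individually} coassociative, after which anti-counitality kills them easily (apply $\id_V\otimes\varepsilon\otimes\id_C$ to $(\rho^{(\alpha)}\otimes\id)\rho^{(\alpha)}=(\id\otimes\Delta)\rho^{(\alpha)}$). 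As a proof, then, your proposal is incomplete: the decisive step is never carried out, only a plan for it is sketched. (Your finite-dimensionality reduction is also shaky, since finite-dimensional subcomodules of $V_{\Bbbk'}$ need not be of the form $W_{\Bbbk'}$ for $W\le V$, but that is minor by comparison.)

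The striking point, however, is that your caution was warranted: the step you balked at cannot be completed, because the lemma as stated is false, and the paper's proof fails at exactly the decomposition you refused to take on faith. Coassociativity of $\rho$ does \emph{not} split componentwise: $(\rho\otimes\id)\rho$ involves the products $e_ie_j$, so the component equations are coupled through the multiplication table of $\Bbbk'$ over $\Bbbk$ --- your ``system of $\Bbbk$-linear identities linking the $\rho^{(\alpha)}$'' is the correct statement, and it does not force the higher components to vanish. For $\bR\le\bC$ with basis $\{1,i\}$ the system reads
\begin{equation*}
  (\rho^{(0)}\otimes\id)\rho^{(0)}-(\rho^{(1)}\otimes\id)\rho^{(1)}=(\id\otimes\Delta)\rho^{(0)},
  \qquad
  (\rho^{(0)}\otimes\id)\rho^{(1)}+(\rho^{(1)}\otimes\id)\rho^{(0)}=(\id\otimes\Delta)\rho^{(1)},
\end{equation*}
and it has non-rational solutions: take $C=\bR[\bZ/2]$ with group-like $g$, $V=\bR^2$, and the $C_{\bC}$-comodule structure on $\bC^2$ given by the $\bZ/2$-grading $\bC^2=\bC(1,i)\oplus\bC(1,-i)$. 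Explicitly $\rho^{(0)}(v)=\tfrac12\,v\otimes(1+g)$ and $\rho^{(1)}(v)=\tfrac12\,Jv\otimes(1-g)$ with $J(a,b)=(-b,a)$: here $\rho^{(1)}$ satisfies \Cref{eq:anticounit} but is \emph{not} coassociative, while the coupled system above holds (the sign coming from $i^2=-1$). This coaction descends to no $C$-comodule structure on $V$, since neither homogeneous component is the complexification of a real subspace. In particular your proposed endgame --- showing every $\Bbbk'$-coalgebra map from a comatrix coalgebra to $C_{\Bbbk'}$ descends along $\Bbbk\le\Bbbk'$ --- is precisely the false assertion (it fails already here, with $n=2$): descent requires descent data and is genuinely extra structure, exactly as you suspected. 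What survives of \Cref{le:comodstruct} is the uniqueness half (faithfulness of $\Bbbk'\otimes_{\Bbbk}-$, as you note); correspondingly, the application of the lemma in the proof of \Cref{th:fieldextfinlim} (preservation of products in $\Coalg$) needs a different argument.
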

\begin{proof}
  Per the adjunction
  \begin{equation*}
    \begin{tikzpicture}[auto,baseline=(current  bounding  box.center)]
      \path[anchor=base] 
      (0,0) node (l) {$\vc_{\Bbbk}$}
      +(4,0) node (r) {$\vc_{\Bbbk'}$,}
      ;
      \draw[->] (l) to[bend left=10] node[pos=.5,auto] {$\scriptstyle (-)_{\Bbbk'}$} (r);
      \draw[->] (r) to[bend left=10] node[pos=.5,auto] {$\scriptstyle \cat{scalar restriction}$} (l);
    \end{tikzpicture}
  \end{equation*}
  a $\Bbbk'$-linear map $V_{\Bbbk'}\to V_{\Bbbk'}\otimes_{\Bbbk'}C_{\Bbbk'}$ is (the same thing as) a $\Bbbk$-linear map $V\to V\otimes C\otimes\Bbbk'$ (unadorned tensor products being over $\Bbbk$). Choosing a basis $e_i$, $i\in I$ for $\Bbbk'/\Bbbk$ with $1=e_{i_0}$, such a map is in turn uniquely determined by its components
  \begin{equation*}
    V\ni v\xmapsto{\quad}v^{(i)}_0\otimes v^{(i)}_1\otimes e_i\in V\otimes C\otimes K. 
  \end{equation*}
  The condition that the original map be a $C_{\Bbbk'}$-comodule structure then amounts to
  \begin{itemize}
  \item the map
    \begin{equation*}
      V\ni v\xmapsto{\quad} v^{(i_0)}_0\otimes v^{(i_0)}_1\in V\otimes C
    \end{equation*}
    being a right $C$-comodule structure;
  \item while
    \begin{equation}\label{eq:awayi0}
      V\ni v\xmapsto{\quad} v^{(i)}_0\otimes v^{(i)}_1\in V\otimes C,\ i\ne i_0
    \end{equation}
    are coassociative and satisfy
    \begin{equation}\label{eq:anticounit}
      v^{(i)}_0\varepsilon\left(v^{(i)}_1\right)=0\in V,\ \forall v\in V
    \end{equation}
    (by contrast to the usual counitality condition, that would equate the left-hand side with $v$). 
  \end{itemize}
  Coassociativity together with \Cref{eq:anticounit} are easily seen to imply the vanishing of all \Cref{eq:awayi0}, proving the claim: the $C_{\Bbbk'}$-comodule structures on $V_{\Bbbk'}$ are precisely those of the form
  \begin{equation*}
    V_{\Bbbk'}\ge V\ni v\xmapsto{\quad}v_0\otimes v_1\otimes 1\in V\otimes C\otimes \Bbbk'\cong V_{\Bbbk'}\otimes_{\Bbbk'}C_{\Bbbk'}
  \end{equation*}
  for a $C$-comodule structure $v\mapsto v_0\otimes v_1$ on $V$. 
\end{proof}



\addcontentsline{toc}{section}{References}

\Addresses

\end{document}